\patchcmd{\section}{\normalfont}{\normalfont\large}{}{}
\patchcmd{\@setauthors}{\footnotesize}{\footnotesize}{}{}
\patchcmd{\@settitle}{\bfseries}{\bfseries\large}{}{}
\Crefname{figure}{Figure}{Figures}
\newtheorem*{theorem*}{Theorem}
\newtheorem{theoremintro}{Theorem}
\newtheorem{theorem}{Theorem}[subsection]
\newtheorem{lemma}[theorem]{Lemma}
\newtheorem{proposition}[theorem]{Proposition}
\newtheorem{corollary}[theorem]{Corollary}
\theoremstyle{definition}
\newtheorem{definition}[theorem]{Definition}
\newtheorem{example}[theorem]{Example}
\newtheorem{exampleintro}[theoremintro]{Example}
\crefname{theoremintro}{Theorem}{Theorems}
\DeclarePairedDelimiter{\ceil}{\lceil}{\rceil}
\newcommand{\N}{\mathrm N}
\newcommand{\Z}{\mathbb Z}
\newcommand{\C}{\mathbb C}
\newcommand{\PP}{\mathbb P}
\newcommand{\Q}{\mathbb Q}
\newcommand{\R}{R}
\def\CC{\mathbb{C}}
\def\NN{\mathbb{N}}
\def\RR{\mathbb{R}}
\def\ZZ{\mathbb{Z}}
\def\SS{\mathbb{S}}
\renewcommand{\epsilon}{\varepsilon}
\renewcommand{\emptyset}{\varnothing}
\newcommand{\e}{\mathrm e}
\newcommand{\cH}{\mathcal{H}}
\newcommand{\cM}{\mathcal{M}}
\newcommand{\scrM}{\mathscr{M}}
\DeclareMathOperator{\vol}{\mathrm{vol}}
\DeclareMathOperator{\Pic}{Pic}
\DeclareMathOperator{\conv}{conv}
\DeclareMathOperator{\jac}{J}
\DeclareMathOperator{\supp}{supp}
\DeclareMathOperator{\len}{len}
\setlist[enumerate,1]{label={(\roman*)},ref={\thetheorem (\roman*)}}
\newcommand{\inner}[2]{\langle {{#1},{#2}} \rangle}
\DeclarePairedDelimiterX{\BW}[2]{\langle}{\rangle_{BW}}{#1, #2}
\title{Totally real divisors on curves}
\author{Lorenzo Baldi\textsuperscript{*} \and Mario Kummer\textsuperscript{$\dagger$} \and Daniel Plaumann\textsuperscript{$\ddagger$}}
\address{\textsuperscript{*}Universität Leipzig and MPI MiS, Leipzig, Germany, lorenzo.baldi@mis.mpg.de}
\address{\textsuperscript{$\dagger$}TU Dresden, Germany, mario.kummer@tu-dresden.de}
\address{\textsuperscript{$\ddagger$}TU Dortmund, Germany, daniel.plaumann@math.tu-dortmund.de}
\date{September 2025}
\subjclass[2020]{Primary: 14P99, 14H40, 51F99; Secondary 14P25, 14H55, 30F45}
\begin{document}
\begin{abstract}
    Since the works of Krasnov and Scheiderer, there has been an interest in studying effective totally real divisors on a curve $X$ defined over a real closed field, i.e.,~effective divisors supported on the real locus. Scheiderer proved that, for smooth curves over $\RR$ with nonempty real locus, each divisor of sufficiently high degree is linearly equivalent to an effective totally real one. The smallest degree $\mathrm{N}(X)$ with this property is called the \emph{totally real divisor threshold}.

    When the field is non-Archimedean, we obtain a classification of topological types of smooth curves for which $\N(X)$ can be ${\infty}$.
    As a consequence, for curves over $\RR$ we prove that $\N(X)$ cannot be bounded from above only in terms of the topological type, unless $X(\RR)$ has many connected components.
    We complement this qualitative result with a quantitative lower bound for $\N(X)$, depending on metric properties of the Jacobian and the curve in the Bergman metric. Finally, we relate these metric properties to period matrices of $X$, expressed in a way compatible with the real structure.
\end{abstract}
\maketitle

\section{Introduction}
Let $X$ be a projective algebraic curve defined over $\RR$. If the real locus $X(\RR)$ is nonempty, we ask if a given (conjugation invariant) divisor on $X$ is linearly equivalent to a divisor supported on $X(\RR)$. It follows from the work of Krasnov \cite{krasnovAlbaneseMapGMZvarieties1984}*{Sec.~2} that the answer is always positive, provided that the curve is smooth and the degree of the divisor is large enough.  
Scheiderer refined this result, showing that such a divisor can be chosen to be effective.
\begin{theorem*}[{\cite{scheidererSumsSquaresRegular2000}*{p.~1050}}]
  Let $X$ be a smooth irreducible curve over $\RR$ such that $X(\RR) \neq \emptyset$. Then every divisor of sufficiently high degree is linearly equivalent to an effective divisor supported on $X(\RR)$.
\end{theorem*}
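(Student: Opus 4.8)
The plan is to reduce the theorem to a surjectivity property of the real Abel--Jacobi map and then establish that property by combining the quoted (non-effective) result of Krasnov with a compactness argument in the real Jacobian. Write $g$ for the genus of $X$ and $X(\RR) = C_0 \sqcup \dots \sqcup C_{s-1}$ for the decomposition of the real locus into its $s \geq 1$ connected components, each homeomorphic to a circle; fix a base point $P_0 \in C_0$. Since $X$ is smooth and irreducible with $X(\RR) \neq \emptyset$, the set $X(\RR)$ is Zariski dense in $X$. For $d \geq 0$ consider the real-algebraic map
\[
  \mu_d \colon X(\RR)^d \longrightarrow \Pic^d(X)(\RR), \qquad (P_1,\dots,P_d) \longmapsto [P_1 + \dots + P_d],
\]
whose image is exactly the set of classes of effective divisors of degree $d$ supported on $X(\RR)$. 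Because $X$ has a real point, every divisor of degree $d$ on $X$ determines a class in $\Pic^d(X)(\RR)$, and every class there is so represented; hence it suffices to prove that $\mu_d$ is surjective for all sufficiently large $d$, which then gives $\N(X) < \infty$.

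It is convenient to translate into the compact real Lie group $G := \Pic^0(X)(\RR)$ by setting $T_d := \operatorname{im}(\mu_d) - d[P_0] = \{[P_1 + \dots + P_d - dP_0] : P_i \in X(\RR)\} \subseteq G$. Then $0 \in T_1$, so $T_0 \subseteq T_1 \subseteq T_2 \subseteq \cdots$, and $T_a + T_b = T_{a+b}$, so $S := \bigcup_{d \geq 0} T_d$ is a sub-semigroup of $G$ containing $0$. The first step is to show that $S$ is dense in $G$. By Krasnov's result, every divisor of sufficiently high degree $d$ is linearly equivalent to one of the form $E - F$ with $E, F \geq 0$ supported on $X(\RR)$; translating by $-d[P_0]$ shows that every element of $G$ lies in $T_{\deg E} - T_{\deg F} \subseteq \overline{S} - \overline{S}$. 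Now $\overline{S}$ is a closed sub-semigroup of the compact group $G$ and is therefore a subgroup: for $h \in \overline{S}$, a convergent subsequence of $(nh)_{n \geq 1}$ yields, upon taking differences of indices, first the identity and then $-h$ inside $\overline{S}$. Hence $\overline{S} - \overline{S} = \overline{S}$, which forces $\overline{S} = G$; that is, $S$ is dense.

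The second step upgrades density to equality for large $d$. Fix $d_0 := \max(g,1)$. Non-speciality and distinctness of the points cut out a nonempty Zariski-open subset of $X^{d_0}$, so by Zariski density of $X(\RR)$ we may choose distinct $Q_1,\dots,Q_{d_0} \in X(\RR)$ with $h^0(K_X - Q_1 - \dots - Q_{d_0}) = 0$. Then the differential of $\mu_{d_0}$ at $(Q_1,\dots,Q_{d_0})$ is surjective over $\CC$ --- this is the standard fact that the Abel--Jacobi differential is surjective at a \emph{non-special} divisor --- hence also over $\RR$, so $\mu_{d_0}$ is a submersion of real manifolds there and its image contains a nonempty open subset of $\Pic^{d_0}(X)(\RR)$; equivalently there is a nonempty open $U \subseteq T_{d_0} \subseteq G$. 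For $e \geq 0$ the set $U + T_e$ is open, these sets increase with $e$, and $\bigcup_e (U + T_e) = U + S = G$ because $S$ is dense (given $x \in G$, the nonempty open set $x - U$ meets $S$). Hence the closed sets $G \setminus (U + T_e)$ form a decreasing family with empty intersection, so by compactness of $G$ we get $U + T_{e_0} = G$ for some $e_0$. Consequently, for every $d \geq d_0 + e_0$ we have $T_d = T_{d_0} + T_{d - d_0} \supseteq U + T_{e_0} = G$, i.e.\ $\mu_d$ is surjective; thus $\N(X) \leq d_0 + e_0$.

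I expect the two delicate points to be precisely the two ``upgrades'': passing from Krasnov's possibly non-effective representatives to effective totally real ones --- handled by the semigroup structure of $\bigcup_d T_d$ together with the fact that a closed sub-semigroup of a compact group is a subgroup --- and passing from the soft statement ``$\bigcup_d T_d$ is dense'' to the uniform statement ``$T_d = G$ for all large $d$'' --- handled by producing one open subset of a single $T_{d_0}$ via Abel--Jacobi and the Zariski density of $X(\RR)$, and then invoking compactness of $G$. If one prefers to avoid Krasnov, the density step can instead be obtained from the curve-theoretic input that the classes of the circles $C_j$ under Abel--Jacobi, together with the identity component, generate $\Pic^0(X)(\RR)$.
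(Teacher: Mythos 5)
Your proposal is correct. Note that the paper does not prove this statement at all: it is quoted verbatim from Scheiderer and used as a black box, so there is no internal proof to compare against. Judged on its own, your argument is sound. The reduction to surjectivity of $\mu_d$ is exactly the paper's reformulation $N_1=N_3$ in its Lemma on the equivalent definitions of $\N(X)$ (the condition $m\,\phi(X(\RR))=J(\RR)$), and your two-step strategy --- (a) density of the semigroup $S=\bigcup_d T_d$ in the compact group $\Pic^0(X)(\RR)$, deduced from Krasnov's non-effective statement via the fact that a closed subsemigroup of a compact group is a group, and (b) promotion of density to equality by exhibiting one nonempty open subset of some $T_{d_0}$ via a non-special totally real divisor and then invoking compactness --- is essentially the classical route (and close in spirit to Scheiderer's original argument). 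All the supporting facts you invoke are legitimate: $X(\RR)$ is infinite and hence Zariski dense in the geometrically irreducible curve $X_\CC$, so a non-special tuple of distinct real points of length $d_0=\max(g,1)$ exists; surjectivity of the complexified Abel--Jacobi differential at a non-special divisor descends to surjectivity on real tangent spaces since the map is defined over $\RR$; and $X(\RR)\neq\emptyset$ kills the Brauer obstruction so that every class in $\Pic^d(X)(\RR)$ is represented by a divisor on $X$. It is worth observing that your compactness step is precisely the qualitative shadow of the paper's quantitative method in its Section on lower bounds, where $\vol(m\,\phi(X(\RR)))$ is compared with $\vol(J(\RR))$; and your closing remark about replacing Krasnov by the generation of $J(\RR)/J(\RR)_0$ by the classes of the ovals matches the paper's use of the Gross--Harris description of the component group.
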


This result motivates the following definition, which can be stated for arbitrary real closed fields. 

\begin{definition}
    \label{def:real_bound}
    Let $R$ be a real closed field and let $X$ be a curve over $R$. We say that a divisor $D$ on $X$ is \emph{totally real} if $\supp(D)\subset X(R)$. We define the \emph{totally real divisor threshold} of $X$ as the infimum $m \in \NN$ such that every divisor of degree $m$ is linearly equivalent to an effective totally real one, and we denote it by $\N(X)$.
\end{definition}

Notice that, by definition, $\N(X) = {\infty}$ if $X(R) = \emptyset$. Scheiderer's result states that $\N(X)<{\infty}$ for smooth curves over $\RR$ with $X(\RR)\neq \emptyset$. 
On the other hand, for singular curves $\N(X)$ need not be finite (see \cite{monnierRealGeneralizedJacobian2005}). It is a natural question what data the invariant $\N(X)$ depends on in general.

In the article of Scheiderer, it was also incorrectly claimed that the finiteness of $\N(X)$ in the smooth case extends to arbitrary real closed fields $R$ \cite{scheidererSumsSquaresRegular2000}*{Th.~2.7}. The first counterexample to this statement, i.e., a smooth curve over a real closed field $R$ with $X(R)\neq \emptyset$ and $\N(X) = {\infty}$, was  recently found by Benoist and Wittenberg \cite{benoistIntegralHodgeConjecture2020}*{Rem.~9.26}.
Our first contribution is to completely characterize when such counterexamples exist.

\begin{theoremintro}[{see \Cref{thm:A_complete} for a refined statement}]
    \label{thm:A}
Let $R$ be a real closed field.
    \begin{enumerate}
        \item If $X$ is a smooth curve over $R$ of genus $g$ with $\N(X)=\infty$ such that $X(R)$ has $r>0$ (semialgebraically) connected components, then $r<g$ and $R$ is non-Archimedean.
        \item Conversely, if $R$ is non-Archimedean and $0\leq r<g$, then there exists a smooth curve $X$ over $R$ of genus $g$ with $\N(X)=\infty$ such that $X(R)$ has $r$ (semialgebraically) connected components.
    \end{enumerate}
\end{theoremintro}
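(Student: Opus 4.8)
The plan is to recast finiteness of $\N(X)$ as an assertion about sumsets in the real Jacobian, dispose of the Archimedean case by transfer from Scheiderer's theorem, handle the range $r\geq g$ by a submersion and connectedness argument on symmetric powers, and construct the non-Archimedean examples by degenerating a real family while controlling the Bergman length of a vanishing real cycle. Concretely, fix a rational point $P_0\in X(R)$ and put $S=\{[P]-[P_0]:P\in X(R)\}\subseteq\Pic^0(X)(R)$, so $0\in S$. Since $X(R)\neq\emptyset$, the point $P_0$ shows that every class in $\Pic^d(X)(R)$ comes from an $R$-rational divisor, and translation by $-d[P_0]$ identifies the totally real effective classes of degree $d$ with the $d$-fold sumset $S+\cdots+S$. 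As $0\in S$ these sumsets increase with $d$, so $\N(X)<\infty$ if and only if $S+\cdots+S=\Pic^0(X)(R)$ for some, equivalently every sufficiently large, $d$.

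For the Archimedean half of (i): if $R$ is Archimedean it embeds into $\RR$, and this embedding is elementary by model completeness of the theory of real closed fields, so $R\prec\RR$. The base change $X_\RR$ is smooth and irreducible with $X_\RR(\RR)\supseteq X(R)\neq\emptyset$, so Scheiderer's theorem gives $m:=\N(X_\RR)<\infty$. The property ``$\N(\,\cdot\,)\leq m$'' is first-order in the coefficients of a fixed projective model of $X$: this requires observing that $\Pic^m(X)$, its $R$-points, the image of $\mathrm{Sym}^m(X(R))$, and the representability of classes by $R$-rational divisors are all uniformly semialgebraic. Transferring this formula along $R\prec\RR$ yields $\N(X)\leq m$. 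Hence $\N(X)=\infty$ together with $X(R)\neq\emptyset$ forces $R$ non-Archimedean.

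For the range $r\geq g$ in (i): suppose $X(R)$ has $r\geq g$ connected components and choose $g$ distinct ones $C_1,\dots,C_g$. Each $C_i$ is semialgebraically connected and, $X$ being proper, semialgebraically compact, so the image of $\Phi\colon C_1\times\cdots\times C_g\to\Pic^g(X)(R)$, $(P_i)\mapsto[\sum_iP_i]$, is compact and connected. By Martens' theorem the locus of $D\in X^{(g)}$ with $h^0(D)\geq2$ has dimension at most $g-1$, so the generic point of the $g$-dimensional set $C_1\times\cdots\times C_g$ is non-special; there $\Phi$ is a submersion, hence $\operatorname{im}\Phi$ has nonempty interior and $\operatorname{im}\Phi-\operatorname{im}\Phi$ contains a semialgebraic neighbourhood of $0$ in $\Pic^0(X)(R)$. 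It remains to show that the $g$ loops carried by $g$ distinct real components span $\Pic^0(X)(R)$ up to finite index — this is exactly where $r\geq g$ is used — after which, together with a check that totally real effective divisors of large degree already meet every connected component of $\Pic^d(X)(R)$, the increasing sumsets exhaust $\Pic^0(X)(R)$ and $\N(X)<\infty$. The delicate point, and what I expect to be the main obstacle in (i), is the independence of these $g$ loops when $r=g$; I expect it to be read off from the description of $\Pic^0(X)(R)$ in terms of a period matrix adapted to the real structure.

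For (ii): if $r=0$ it suffices to exhibit a smooth genus-$g$ curve over $R$ with $X(R)=\emptyset$, for then $\N(X)=\infty$ by definition; such curves exist over $\RR$ — e.g.\ a hyperelliptic model $y^2=-f(x)$ with $f$ positive definite of degree $2g+2$ with simple roots — and their existence transfers to any real closed field. For $1\leq r<g$, fix a positive infinitesimal $\tau\in R$ and let $X=X_\tau$ be the member at $t=\tau$ of an explicit one-parameter family $\{X_t\}$ of smooth genus-$g$ curves over $\RR$ arranged so that, for $0<t\ll1$, $X_t(\RR)$ has exactly $r$ connected components — whence $X_\tau(R)$ has exactly $r$ components by uniform semialgebraic triviality of the family — and so that as $t\to0^+$ the family degenerates to a stable curve whose dual graph has first Betti number $g-r\geq1$ and carries a ``real loop'', forcing some real cycle of $X_t$ to have Bergman length tending to $0$. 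Feeding this vanishing length into our quantitative lower bound for $\N$ makes $\N(X_t)\to\infty$; since for each $m$ the semialgebraic set of $t$ with $\N(X_t)>m$ contains an interval $(0,\epsilon_m)$, it contains the infinitesimal $\tau$, so $\N(X)=\infty$. The main obstacle here is to build a family with the prescribed real topology that degenerates to exactly the right combinatorial type, and to verify that the quantitative bound does blow up along it.
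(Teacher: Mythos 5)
Your opening reduction to sumsets in $J(R)$ and your Archimedean argument (embed $R$ into $\RR$, apply Scheiderer, transfer the semialgebraic condition ``$\N\le m$'' back by model completeness) coincide with the paper's proof of \Cref{thm:A_complete}. But both places where you defer the work are exactly where the content lies. For $r\ge g$ in (i), the ``delicate point'' you postpone --- that $g$ distinct ovals generate $J(R)_0$ --- is the whole theorem of Huisman--Monnier; the paper settles it not with a period matrix but with a topological degree argument: the fundamental classes $[S_1],\dots,[S_g]$ extend to a basis of $H_1(X,\Z)$ (\Cref{lem:independent_real_components}), so $S_1\times\cdots\times S_g\to J(\RR)_0$ is an isomorphism on $H_1$, hence on $H_g$ by Poincar\'e duality, hence surjective of degree one (\Cref{thm:generationIdComp}), and a parity adjustment gives the \emph{uniform} bound $\N\le 2g-1$ (\Cref{thm:huismanmonnier}). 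Uniformity matters: to conclude for an arbitrary real closed $R$ you must transfer a single first-order statement, and your submersion-plus-exhaustion sketch, even if completed, gives finiteness only curve by curve over $\RR$, leaving the transfer to general $R$ (and the exhaustion of all $2^{r-1}$ components of $J(R)$) open.

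Part (ii) is where your route genuinely diverges from the paper's and where the gap is most serious. The paper's argument is algebraic: degenerate to a stable curve whose normalization has no $R_1$-points and non-finitely-generated divisor class group, and a specialization argument for hyperplane sections (\Cref{thm:criterioninf}, with the explicit hyperelliptic families of \Cref{ex:nondiv} and the covers in the dividing cases) forces $\N=\infty$ over the Puiseux series field; no metric input is needed. Your alternative --- feed a vanishing Bergman length into \Cref{thm:C} --- is carried out in the paper only for one genus-$2$ family, and there the vanishing cycle is \emph{separating}, which is why $\vol(J_\epsilon(\RR))\to 2$ and the computation is painless. In the families that realize general $(g,r)$ with $r$ shrinking real ovals, the vanishing cycles are non-separating real cycles, the imaginary part $T$ of the Riemann matrix blows up logarithmically, and $\vol(J(\RR)_0)=(\det T)^{-1/2}$ tends to $0$ together with $\len(X(\RR))$; whether the ratio $\vol(J(\RR)_0)/\len(X(\RR))^g$ in \Cref{thm:C} still diverges requires matching these logarithmic period asymptotics against the decay rate of the Bergman length of the shrinking ovals, and you do not do this. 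Two further points: an arbitrary non-Archimedean $R$ need not contain $\RR$, so your family must be defined over the relative algebraic closure of $\Q$ in $R$ (as in the paper) before ``specialize at the infinitesimal $\tau$'' makes sense; and realizing every $r<g$ (let alone every admissible type, as in \Cref{thm:A_complete}) with the prescribed degeneration is itself nontrivial and is exactly what \Cref{ex:nondiv}--\Cref{ex:div} supply.
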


Our next result is better stated using real moduli spaces. The coarse moduli space $\scrM_g^{\RR}$ of real isomorphism classes of smooth real algebraic curves was introduced by Gross and Harris \cite{grossRealAlgebraicCurves1981a} and Sepp\"al\"a and Silhol \cites{seppalaModuliSpacesReal1989}. It is a semianalytic variety \cite{huismanRealQuotientSingularities1999} of dimension $3g-3$, which is homeomorphic with the underlying topological space of the corresponding real algebraic stack \cite{degaayfortmanRealModuliSpaces2022a}*{Th.~8.2}.

The \emph{topological type} of a curve $X$ (see \Cref{sec:preliminaries} for more details) is the triple $(g,r,a)$, where $g$ is the genus of $X$, $r$ is the number of connected components of $X(\RR)$ and $a = 1$ if $X \setminus X(\RR)$ is connected and $a=0$ if $X\setminus X(\RR)$ has two connected components. 
If two curves are isomorphic over $\RR$, then they have the same topological type. Moreover, two isomorphism classes of smooth curves lie in the same connected component of $\scrM_g^\RR$ if and only if they have the same topological type.
We can write the moduli space as the disjoint union $\scrM_g^\RR = \bigsqcup_{(g,r,a)} \scrM_{(g,r,a)}^\RR$, where $\scrM_{(g,r,a)}^\RR$ parametrizes real isomorphism classes of smooth curves with the topological type $(g,r,a)$. We say that a topological type $(g,r,a)$ is \emph{admissible} if $\scrM_{(g,r,a)}^\RR$ is nonempty.

\begin{theoremintro}[{see \Cref{sec:qualitative}}]
    \label{thm:B}
    Consider the totally real divisor threshold $\N$ as a function on $\scrM_g^\RR$.
    \begin{enumerate}
        \item For $m \ge 2g+1$, the set 
        $\N^{-1}(\{ \, 0, \dots , m \,\}) = \left\{ \, X \in \scrM_g^{\RR} \mid \N(X) \le m  \, \right\}$
        is closed. 
        \item For an admissible topological type $(g,r,a)$ with $r>0$ real connected components, let \[\N_{(g,r,a)} \colon \scrM_{(g,r,a)}^\RR \longrightarrow \NN\] be the restriction of $\N$ to $\scrM_{(g,r,a)}^\RR$. Then $\N_{(g,r,a)}$ is unbounded unless $r=g$ or $r=g+1$.
\end{enumerate}
\end{theoremintro}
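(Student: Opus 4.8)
The plan is to prove the two parts separately, as they are of quite different nature. For part (i), I would argue that for $m \ge 2g+1$ the condition $\N(X) \le m$ is a \emph{closed} semialgebraic-type condition because it can be tested on a compact family of divisors. Concretely, fix $m \ge 2g+1$. By Riemann–Roch, every divisor class of degree $m$ on a genus-$g$ curve is represented by effective divisors moving in a projective space of dimension $m-g$; the locus of divisor classes of degree $m$ is a torsor under $\operatorname{Pic}^0(X) \cong \operatorname{Jac}(X)(\RR)$, which is compact. So $\N(X) \le m$ if and only if for every point of this compact parameter space the corresponding complete linear system contains an effective totally real divisor. The key point is to set this up in a family over $\scrM_g^\RR$ (or rather over a suitable chart or over the moduli stack of \cite{degaayfortmanRealModuliSpaces2022a}), observe that ``contains an effective totally real divisor'' is a closed condition on the total space (being the image of the compact real locus of the $m$-th symmetric power under the Abel–Jacobi map, intersected with the fibre), and then use properness to conclude that the set of $X$ failing this for \emph{some} class is open — equivalently $\N^{-1}(\{0,\dots,m\})$ is closed. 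The only subtlety I anticipate is ensuring the degeneration behaviour at the boundary strata of $\scrM_g^\RR$ does not spoil properness; since $\scrM_g^\RR$ itself parametrises only smooth curves and we use a relative symmetric power over it, this should go through, but one must be careful that $m\ge 2g+1$ makes all the relevant linear systems very ample and of constant dimension, which is what rigidifies the family.

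For part (ii), the strategy splits into two regimes. First, the positive cases $r=g$ and $r=g+1$: here I would show $\N_{(g,r,a)}$ is actually \emph{bounded}, by exhibiting a uniform bound depending only on $(g,r,a)$. Over $\RR$, Scheiderer's theorem gives finiteness for each curve; to get uniformity when $r$ is large (close to its maximum $g+1$, the Harnack bound, or the near-maximal value $g$) one uses that with so many real ovals the real locus is ``large'' inside $X(\CC)$ and the Abel–Jacobi image of the real symmetric powers fills up the real Jacobian quickly — I would make this precise via the structure of $\operatorname{Jac}(X)(\RR)$, whose number of connected components is governed by $r$ and $a$, together with an explicit count showing that $g$ real points already suffice to hit every class of degree $g$. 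This is essentially the classical fact that for dividing curves (and their neighbours) interpolation at real points is unobstructed. Second, the unboundedness for $0<r<g$ (and also $r=g$ only in the non-dividing... — actually the statement excludes only $r=g,g+1$, so everything else with $r>0$): this is where I expect the real work, and the natural approach is to \emph{transfer} the non-Archimedean counterexamples of Theorem A(ii) to the Archimedean setting. Theorem A(ii) produces, for every $0 \le r < g$, a smooth curve over a non-Archimedean real closed field $R$ with $\N = \infty$. Spreading this out and specializing, or applying a Tarski–Seidenberg / model-completeness argument, one obtains smooth curves over $\RR$ of the same topological type $(g,r,a)$ with $\N(X)$ arbitrarily large: if $\N$ were bounded by $m$ on $\scrM_{(g,r,a)}^\RR$, the first-order statement ``every genus-$g$ curve of topological type $(g,r,a)$ has $\N \le m$'' would hold over $\RR$, hence (by transfer of the appropriate first-order formula in the language of real closed fields, using that $\N(X)\le m$ is expressible once $m$ is fixed, as in part (i)) over every real closed field, contradicting Theorem A(ii). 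One must check that the counterexample curves from Theorem A(ii) indeed realize the topological type $(g,r,a)$ for both values of $a$ when both are admissible, or else argue the unboundedness for whichever $a$ occurs and handle the other by a deformation within $\scrM_{(g,r,a)}^\RR$.

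The main obstacle will be making the transfer in part (ii) rigorous: one needs that ``$\N(X) \le m$'' is a first-order condition uniform over the moduli space, which requires the boundedness of the relevant linear systems and symmetric powers established in part (i), and one needs Theorem A(ii) to be stated (as it is, via \Cref{thm:A_complete}) with enough control on the topological type. A secondary obstacle is the boundedness claim in the $r\in\{g,g+1\}$ cases — proving a \emph{uniform} bound rather than curve-by-curve finiteness — for which I would rely on the explicit description of $\operatorname{Jac}(X)(\RR)$ and its components in terms of $(g,r,a)$ and a dimension count showing the real Abel–Jacobi map from $\operatorname{Sym}^g X(\RR)$ is surjective onto $\operatorname{Jac}(X)(\RR)$ precisely in these extremal cases.
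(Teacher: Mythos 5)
Your part (ii) is essentially the paper's argument: the paper also deduces unboundedness for $0<r<g$ from the non-Archimedean counterexamples (the explicit degenerating families behind \Cref{thm:A_complete}), using that ``$\N(X)\le m$'' is a semialgebraic condition on the Hilbert scheme (\Cref{lem:bound_nx_semialgebraic}) so that an $R_2$-point of $\cH_{(g,r,a)}$ with $\N=\infty$ is exactly the germ of a real semialgebraic path $X_\epsilon$ along which $\N(X_\epsilon)$ is unbounded (\Cref{lem:unboundediffinf}); your Tarski-transfer contrapositive is the same mechanism. Note that the statement of (ii) only asserts unboundedness for $0<r<g$, so your discussion of a uniform bound in the cases $r\in\{g,g+1\}$, while correct in spirit (it is \Cref{thm:huismanmonnier}, proved via surjectivity of $S_1\times\cdots\times S_g\to J(\RR)_0$), is not actually needed here.

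For part (i) you propose a genuinely different route: a direct topological compactness argument with the relative symmetric power of the real locus mapping to the relative real Picard variety, in place of the paper's algebraic specialization (curve selection lemma, reduction to a family over $\RR\llbracket t\rrbracket^{\mathrm{alg}}$, and Hensel's lemma in \Cref{thm:criterioninf}(ii) and \Cref{thm:semicontinuity}). The underlying idea --- compactness of $X(\RR)$ forces limits of totally real effective divisors to remain totally real --- is the same, and your version would, if completed, avoid the Puiseux-series detour. But there is one genuine soft spot: you invoke \emph{properness} to conclude that the locus of curves admitting a bad divisor class is open. Properness of the projection $\Pic^m\to$ (base) only gives that images of closed sets are closed; the set of bad curves is the image of an \emph{open} set (the complement in the relative Picard of the closed image of the relative $\mathrm{Sym}^m$ of the real locus), and a proper surjection need not take open sets to open sets. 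What you actually need is that the relative real Picard is \emph{open} over the base, which holds because the relative Picard scheme is smooth over the base and smooth morphisms are open on real points near real points of the fibres --- equivalently, you need local sections through any real divisor class. You should make this explicit, and you still need the stack presentation of $\scrM_g^\RR$ from \cite{degaayfortmanRealModuliSpaces2022a} (as the paper uses) to have an actual family to work with, since $\scrM_g^\RR$ is only a coarse space. Finally, be aware that in the paper the hypothesis $m\ge 2g+1$ is what makes the specialization argument work (reduced effective representatives, pluricanonical embedding); your compactness argument does not visibly use it, so you should either locate where it enters or be prepared to justify the stronger claim.
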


The two bounded cases $r=g$ and $r=g+1$ were already found by Huisman \cite{huismanGeometryAlgebraicCurves2001} and Monnier \cite{monnierDivisorsRealCurves2003}, who also introduced the notation $\N(X)$. In both cases, they were able to show the upper bound $\N(X) \le 2g-1$. We reprove this upper bound in a unified way in \Cref{thm:huismanmonnier}.

The interest in bounding $\N(X)$ also arises from applications. For instance, given an embedded curve $X\subset \PP^n$, $\N(X)$ is used to determine the maximal number of real intersection points of $X$ with a hypersurface, or to study the cone of nonnegative forms on $X$ and the dual moment problem. We refer the reader to \cites{leComputingTotallyReal2022c,didioMultidimensionalTruncatedMoment2021b,baldiNonnegativePolynomialsMoment2024b} for more details. The totally real divisor threshold is also employed in the study of hyperbolic embeddings \cite{kummerSeparatingSemigroupReal2020} of curves. The question of which theta characteristics have a totally real representative was studied in the context of convex hulls of canonical curves \cite{tottheta}.

We remark that \Cref{thm:B} (ii) disproves a conjecture of Huisman \cite{huismanNonspecialDivisorsReal2003}*{Conj.~3.4} on so-called unramified curves in $\PP^n$ for odd $n$. Indeed, \cite{monnierDivisorsRealCurves2003}*{Th.~3.7} shows that $\N_{(g,g-1,a)}$ is bounded if the conjecture holds true for $n=g+1$ in case $g$ is even and $n=g+2$ when $g$ is odd. 
Huisman's conjecture was already disproven in the case $n=3$ by Mikhalkin and Orevkov, see \cite{mikhalkinMaximallyWrithedReal2019} and \cite{mikhalkinRigidIsotopyMaximally2021}*{Rem.~2.7}, and  by Kummer and Manevich \cite{kummerHuismansConjecturesUnramified2021}. Our results imply the existence of counterexamples in $\PP^n$ for any odd $n\geq3$. 

\Cref{thm:B} implies that, unless the curve $X$ has many real connected components, $\N(X)$ cannot be bounded just in terms of the invariants $g$, $r$ and $a$ of $X$. However, in \Cref{thm:C} below, we are able to prove that $\N(X)$ can be bounded from below by metric properties of the embedding of the curve in its Jacobian through the Abel--Jacobi map.

Before stating the result, we need some preliminaries: we refer the reader to \Cref{sec:periods} for more details. We denote by $J = \jac(X)\cong \CC^g/\Lambda$ the Jacobian of a curve $X$ of genus $g$. If $X$ is defined over $\RR$ and has $r>0$ real connected components, then $J$ is defined over $\RR$, and $J(\RR)$ is a $g$-dimensional, compact, real Lie group with $2^{r-1}$ connected components. We let $J(\RR)_0$ denote the connected component of the identity. 
The principal polarization of $J$ induces an Hermitian (Kähler) metric on $J$. Its real part is a Riemannian metric on the underlying $2g$-dimensional real manifold, which we call the canonical Riemannian metric. We denote by $\vol(J(\RR)_0)$ the volume of the submanifold $J(\RR)_0 \subset J$ in this metric. The Abel--Jacobi map $\phi \colon X \to J$ is an embedding, hence we can define the \emph{Bergman (Riemannian) metric} on $X$ (see e.g.~\cites{wentworthAsymptoticsArakelovGeensFunction1991,habermannRiemannianMetricsTeichmuller1996a}) as the pullback of the canonical Riemannian metric on $J$ via $\phi$. We denote by $\len(X(\RR))$ the length of the real locus of $X$ with respect to this metric.

We are now ready to state the lower bound for $\N(X)$. A simplified version for $r=1$ is given in \Cref{thm:lower_bound_r=1}.
\begin{theoremintro}[{see \Cref{sec:lower_bound}}]
    \label{thm:C}
    Let $X$ be a smooth curve over $\RR$ of genus $g$ such that $X(\RR)$ has $r >0$ connected components.  Denoting Euler's number by $\e$, we then have:
    \[
        \N(X) \ge 2 \left( 1- r + \left(\left(\frac{r}{2}\right)^{g}\left(\frac{r-1}{\e}\right)^{g(r-1)} \frac{\vol(J(\RR)_0)}{\len(X(\RR))^g} \right)^{\frac{1}{gr}} \right)-1
    \]
\end{theoremintro}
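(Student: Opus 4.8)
The plan is to translate the statement about $\N(X)$ into a surjectivity question for the real Abel--Jacobi map, and then to prove non-surjectivity by a volume count. First I would record the monotonicity of $\N$: since $X(\RR)\neq\emptyset$, if every divisor class of degree $m$ has an effective totally real representative then so does every class of degree $m+1$ (subtract a fixed real point, solve, add it back). Hence the set of such degrees is upward closed, and $\N(X)>m$ if and only if some $\RR$-rational divisor class of degree $m$ has no effective totally real representative. Because $X(\RR)\neq\emptyset$, every $\RR$-rational class of degree $m$ does contain an $\RR$-rational divisor (the Brauer obstruction vanishes), and the effective totally real divisors of degree $m$ are precisely the points of $\mathrm{Sym}^m(X(\RR))$; thus $\N(X)>m$ is equivalent to the \emph{non-surjectivity} of
\[
\Phi_m\colon\ \mathrm{Sym}^m\bigl(X(\RR)\bigr)\ \longrightarrow\ \Pic^m(X)(\RR),\qquad {\textstyle\sum_j p_j}\ \longmapsto\ {\textstyle\bigl[\sum_j p_j\bigr]}.
\]
Being a torsor under $J(\RR)$, the space $\Pic^m(X)(\RR)$ inherits the translation-invariant canonical Riemannian metric, so all of its $2^{r-1}$ connected components are isometric to $J(\RR)_0$ and its total volume is $2^{r-1}\vol(J(\RR)_0)$. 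It therefore suffices to show: whenever $m$ is strictly below the right-hand side of the theorem, the image of $\Phi_m$ has $g$-dimensional volume strictly less than $2^{r-1}\vol(J(\RR)_0)$.

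To control that volume I would decompose the source along the ovals. Writing $X(\RR)=\bigsqcup_{i=1}^{r}\widetilde C_i$ and $C_i=\phi(\widetilde C_i)\subset J(\RR)$ for the image loop, a closed curve of some length $L_i\ge 0$ in the canonical metric with $\sum_i L_i=\len(X(\RR))$ (this is the definition of the Bergman metric), one has
\[
\mathrm{Sym}^m\bigl(X(\RR)\bigr)\ =\ \bigsqcup_{m_1+\dots+m_r=m}\ \prod_{i=1}^{r}\mathrm{Sym}^{m_i}(\widetilde C_i),
\]
and $\Phi_m$ maps the summand indexed by $\mathbf m=(m_1,\dots,m_r)$ into a single translate $S_{\mathbf m}$ of the Minkowski sum $m_1C_1+\dots+m_rC_r$, lying in the component of $\Pic^m(X)(\RR)$ prescribed by $\sum_i m_i[C_i]$ in $J(\RR)/J(\RR)_0\cong(\ZZ/2)^{r-1}$. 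So the image of $\Phi_m$ is the union of the $\binom{m+r-1}{r-1}$ sets $S_{\mathbf m}$, and everything comes down to (a) an upper bound for $\vol_g(S_{\mathbf m})$ in terms of the $m_i$ and the $L_i$, and (b) the summation of these bounds, sorted by which of the $2^{r-1}$ components each $S_{\mathbf m}$ meets.

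For (a), I would pass to the universal cover $\RR^g\to J(\RR)_0$, a local isometry, lift each $C_i$ to a rectifiable arc of length $L_i$, and exploit that the covering map is $1$-Lipschitz to bound $\vol_g(S_{\mathbf m})$ by the $g$-volume of a Euclidean Minkowski sum of $m_i$ copies of each arc. Since each arc lies in its convex hull $K_i$, a convex body of diameter at most $L_i$, this Minkowski sum is contained in $\sum_i m_iK_i$; expanding $\vol_g(\sum_i m_iK_i)$ into mixed volumes and inserting the diameter bounds (equivalently: approximating the arcs by inscribed polygons, using the zonotope volume formula, and passing to the limit) gives an estimate polynomial in the products $m_iL_i$. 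For (b), I would feed these into $\sum_{\mathbf m}\vol_g(S_{\mathbf m})\ge 2^{r-1}\vol(J(\RR)_0)$ and then optimise, using the AM--GM inequality in the form $\prod_i L_i\le(\len(X(\RR))/r)^r$ together with Stirling-type bounds for the binomial and multinomial coefficients that appear; this last step is where the factors $(r/2)^{g}$ and $\bigl((r-1)/\e\bigr)^{g(r-1)}$, and the replacement of $m$ by $m+2r-1$, come from, and it rearranges the inequality into the closed form of the theorem. The main obstacle is precisely this quantitative core: obtaining the convex-geometric bound in (a) with the \emph{sharp} dependence on $r$ rather than a crude one, and pushing the optimisation in (b) through without losing it. A good consistency check is the case $r=1$, where the whole argument should collapse to the clean inequality $\N(X)\ge\vol(J(\RR)_0)^{1/g}/\len(X(\RR))-1$ of \Cref{thm:lower_bound_r=1}.
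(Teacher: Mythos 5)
Your reduction and your decomposition are exactly the paper's: by \Cref{lem:0}, $\N(X)>m$ iff $m\,\phi(X(\RR))\neq J(\RR)$, and $m\,\phi(X(\RR))$ is the union, over the $\binom{m+r-1}{r-1}$ compositions $\alpha$ of $m$, of the Minkowski sums $\sum_i\alpha_iZ_i$ of the oval images, each lying in the component of $J(\RR)$ determined by the parities of the $\alpha_i$; the volumes are then controlled by lifting to the universal cover and enclosing each $Z_i$ in a cube of side $\len(S_i)$. Where you genuinely diverge is the quantitative core. The paper does \emph{not} sum the volumes of the individual pieces: it passes to $2m$ and intersects with $J(\RR)_0$ (so only even compositions survive), collapses everything into the \emph{single} Minkowski sum $\frac{m}{r}\binom{m+r-1}{r-1}\sum_{i}Z_i$, and bounds that by one hypercube of side $\frac{m}{r}\binom{m+r-1}{r-1}\len(X(\RR))$. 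The constants $(r/2)^{g}$ and $((r-1)/\e)^{g(r-1)}$, the $gr$-th root, and the outer $2(\,\cdot\,)-1$ all come from precisely this bookkeeping together with $\binom{n}{k}\le(\e n/k)^k$. Your union bound $\vol(\mathrm{im}\,\Phi_m)\le\sum_{\mathbf m}\vol(S_{\mathbf m})$ is a perfectly legitimate alternative, and for large $m$ it is sharper, because the binomial coefficient then enters linearly rather than raised to the $g$-th power inside the volume; but for the same reason it produces an inequality of a different shape, with exponent $1/(g+r-1)$ in place of $1/(gr)$ and without the factor-of-two loss from doubling.

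This is where the gap sits. Your claim that the final optimisation ``rearranges the inequality into the closed form of the theorem'' is not substantiated: as described, your computation does not terminate in the displayed constants, and deducing the stated inequality from the (differently shaped, generally stronger) bound you would obtain is an additional verification you have not addressed — in particular the two expressions are not comparable term by term when the quantity under the root is small. Moreover, the convex-geometric refinement via convex hulls and mixed volumes, which you single out as the main obstacle, is not needed for the stated result: the crude estimate $\vol\bigl(\sum_i\alpha_iZ_i\bigr)\le\bigl(\sum_i\alpha_i\len(S_i)\bigr)^g$ already suffices, and investing effort there will not by itself reproduce the theorem. To close the argument you should either (a) follow the paper's coarser route — double $m$, restrict to $J(\RR)_0$, absorb the union into the single Minkowski sum indexed by $\sum_{|\alpha|=m}\alpha_i=\frac{m}{r}\binom{m+r-1}{r-1}$, and invert — or (b) carry your optimisation to an explicit closed form and then prove that it implies the displayed inequality for all admissible $g$, $r$ and all values of $\vol(J(\RR)_0)/\len(X(\RR))^g$.
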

When the topological type of $X$ is fixed, the only ingredient of the bound in \Cref{thm:C} that may vary is the ratio $\frac{\vol(\jac(\RR)_0)}{\len(X(\RR))^g}$. This tells us that the smaller the image of $X(\RR)$ via the Abel--Jacobi map is, compared to the real part of the Jacobian, the larger the totally real divisor threshold will be. To the best of our knowledge, this is the first occurrence of a metric property which is shown to play a key role in the study of divisors on real curves.

It is natural to ask whether \Cref{thm:C} gives an effective way to compute a lower bound for $\N(X)$ and whether or not the bound is tight. As a first step, in \Cref{thm:D} below we show how to read $\vol(\jac(\RR)_0)$, or equivalently $\vol(\jac(\RR))$, from a period matrix for the Jacobian $J = \jac(X) \cong H^0(X, \Omega)^*/ H_1(X, \ZZ) \cong \CC^g/ \Lambda$.
The lattice $\Lambda$ defining $J$ is generated by the columns of the $g\times 2g$ period matrix for $X$. If the bases of $H^0(X, \Omega)^*$ and $H_1(X, \ZZ)$ are chosen in a compatible way with respect to the real structure (see \Cref{sec:periods} for details), then the period matrix takes the special form
\begin{equation*}
    \left( \begin{array}{ccc|ccc}
        1 & {} & {} & {} & {} & {}\\
        {} & {\ddots} & {} & \multicolumn{3}{c}{\frac{1}{2}M + iT} \\
        {} & {} & {1} & {} & {} & {}
    \end{array} \right)
\end{equation*}
with $T \in \RR^g$ a symmetric, positive definite matrix, and $M\in \ZZ^{g\times g}$, called \emph{reflection matrix}. We can now state our result.
\begin{theoremintro}[{see \Cref{sec:Bergman}}]
    \label{thm:D}
    \label{cor:volume_jacobian}
    Let $X$ be a smooth curve over $\RR$ with $X(\RR) \neq \emptyset$, $J = \jac(X)$ and $T$ as above. If $r >0$ is the number of connected components of $X(\RR)$, then
    \[
        \vol(J(\RR)) = 2^{r-1}(\det T)^{-\frac{1}{2}}
    \]
    and $\det T = \vol(J(\RR)_0)^{-2}$ is an invariant of $X$.
\end{theoremintro}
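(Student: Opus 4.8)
The plan is to read the canonical Riemannian metric and the submanifold $J(\RR)_0 \subset J = \jac(X)$ directly off the normalized period matrix, and then to obtain both formulas by an elementary volume computation. Write the period matrix as $(\,I_g \mid \tau\,)$ with $\tau = \tfrac12 M + iT$, so that $\Lambda = \ZZ^g + \tau\ZZ^g$ and $\tau$ has imaginary part $T$. For a principally polarized abelian variety with period matrix in this shape, the polarization is the Hermitian form $H(z,w) = z^{\mathsf t}\,T^{-1}\,\overline w$ on $\CC^g$; one checks that its imaginary part is unimodular on $\Lambda$ (here $M$ is symmetric since $\tau$ is, by the Riemann relations). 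Passing to real coordinates $z = x+iy$ with $x,y\in\RR^g$, the real part of $H$ — which is the canonical Riemannian metric on $J$ used in \Cref{thm:C} — is the constant, positive definite form $\begin{pmatrix} T^{-1} & 0 \\ 0 & T^{-1}\end{pmatrix}$ on $\RR^{2g}$.

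Next I would identify $J(\RR)$. Since $\tau + \overline\tau = M \in \ZZ^{g\times g}$, the antiholomorphic involution $z\mapsto\overline z$ preserves $\Lambda$, and by the choice of basis compatible with the real structure (see \Cref{sec:periods}) this is exactly the real structure on $J$ inherited from $X$. Hence $J(\RR)_0$ is the image in $\CC^g/\Lambda$ of $\RR^g = \{\, y=0 \,\}\subset\CC^g$: this image is a compact connected subgroup of the $g$-dimensional Lie group $J(\RR)$, hence of full dimension, hence equal to $J(\RR)_0$. Because $T$ is positive definite, $\RR^g\cap\Lambda = \ZZ^g$, so $J(\RR)_0\cong\RR^g/\ZZ^g$ with the metric induced by restriction, namely the constant form $T^{-1}$. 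Integrating its Riemannian volume form over the fundamental domain $[0,1]^g$ gives
\[
    \vol(J(\RR)_0) \;=\; \int_{[0,1]^g}\sqrt{\det(T^{-1})}\;\mathrm dx \;=\; (\det T)^{-\frac12},
\]
which is the claim $\det T = \vol(J(\RR)_0)^{-2}$. Since $J(\RR)$ is a compact real Lie group with $2^{r-1}$ connected components (as recalled in the introduction), each of which is a translate of $J(\RR)_0$ and so isometric to it (the metric, descending from a constant metric on $\CC^g$, is translation invariant), we get $\vol(J(\RR)) = 2^{r-1}\vol(J(\RR)_0) = 2^{r-1}(\det T)^{-\frac12}$.

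The invariance of $\det T$ is then formal: the right-hand side $\vol(J(\RR)_0)^{-2}$ is an invariant of $X$, because the principally polarized abelian variety $J$ with its real structure is a functorial invariant of the real curve $X$, so the volume of $J(\RR)_0$ in the canonical metric depends only on the isomorphism class of $X$, even though the compatible period matrix itself is not unique. As for difficulty, the two ingredients that actually require a computation — the identity $\RR^g\cap\Lambda=\ZZ^g$ and the volume integral — are routine; the main (mild) obstacle is the first step, i.e.\ pinning down the correct normalization of the canonical metric as the real part of $H(z,w)=z^{\mathsf t}T^{-1}\overline w$ (getting the factors of $2$ right so that the imaginary part of $H$ is unimodular on $\Lambda$ and the normalization matches the one underlying \Cref{thm:C}), together with recalling precisely how the real structure on $J$ is encoded by the compatible basis of \Cref{sec:periods}.
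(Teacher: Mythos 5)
Your proof is correct and follows essentially the same route as the paper: the paper factors the argument through the general statements for principally polarized real abelian varieties (Theorem~\ref{thm:volume_identity} and Corollary~\ref{cor:volume}, whose proofs are exactly your Gram-matrix computation $\sqrt{\det T^{-1}}$ on $\RR^g/\ZZ^g$ and your fixed-point analysis of $z\mapsto\bar z$ on $\CC^g/\Lambda$) and then invokes Comessatti's count of $2^{r-1}$ components, whereas you inline the same computation directly for the Jacobian. Your normalization $H(z,w)=z^{\mathsf t}T^{-1}\overline w$ agrees with the paper's canonical metric (the dual of the Hodge product, whose Gram matrix in the basis dual to the normalized differentials is $T^{-1}$), so no discrepancy arises.
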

This result is stated for Jacobians for ease of presentation. It holds true more generally for principally polarized abelian varieties over $\RR$, see \Cref{thm:volume_identity} and \Cref{cor:volume}.

Compared to $\vol(J(\RR)_0)$, the computation of $\len(X(\RR))$ (both appearing in \Cref{thm:C}) is more challenging in general. However, in the hyperelliptic case $\len(X(\RR))$ can be computed effectively. In particular, in \Cref{sec:genus2} we compute $\len(X_\epsilon(\RR))$ for a special family of curves $X_\epsilon$ of genus $2$.
\begin{exampleintro}[see \Cref{sec:genus2}]
 Consider the family of curves $X_\epsilon$ with hyperelliptic model \[w^2 = (1+z^2)((1-\epsilon)^2+z^2)((1+\epsilon)^2+z^2)\]
 which is a concrete realization of \cite{benoistIntegralHodgeConjecture2020}*{Rem.~9.26}. Then, for all $0<\epsilon<\frac{1}{2}$, we have 
 \begin{equation*}
    \N(X_\epsilon) \ge  \frac{1}{\sqrt{\epsilon}} ,
\end{equation*}
 see \Cref{sec:example_1}. By a refined analysis, we furthermore prove in \Cref{sec:refined} an upper bound for $\N(X_\epsilon)$ which grows at the same order $1/\sqrt{\epsilon}$. This shows that our general lower bound is asymptotically tight for this example. We will also give a concrete family of divisors on $X_\epsilon$ that realizes the above lower bound on $\N(X_\epsilon)$.
\end{exampleintro}

\subsection{Organization} In \Cref{sec:preliminaries}, we specify the notations and the general assumptions in the manuscript, and give some equivalent definitions for the totally real divisor threshold.

In \Cref{sec:periods}, we discuss Jacobians of real curves and, more generally, principally polarized abelian varieties over $\RR$. In \Cref{sec:comessatti} we give an explicit proof of an old result due to Comessatti on the existence of period matrices in a special form compatible with the real structure. 
In \Cref{sec:volume} we read from such period matrices the volume of the real part of an abelian variety, with respect to the canonical measure induced by the principal polarization. In \Cref{sec:Bergman} we apply the previous results to Jacobians and prove \Cref{thm:D}.

In \Cref{sec:qualitative} we study the totally real divisor threshold $\N$. In \Cref{sec:families} we relate $\N$ for families of curves over $\RR$ and for curves defined over the field of algebraic Puiseux series. In \Cref{sec:few_ovals} we prove \Cref{thm:A,thm:B}.

In \Cref{sec:quantitiative} we give quantitative bounds for $\N(X)$ for curves over $\RR$. In \Cref{sec:many_ovals} we reprove results due to Huisman and Monnier in a unified way, for curves with many real connected components. In \Cref{sec:lower_bound} we prove \Cref{thm:C}.

In \Cref{sec:genus2} we study a special family of curves of genus $2$. In \Cref{sec:example_1} we apply our general results to this family, and finally in \Cref{sec:refined} we prove that our estimate is asymptotically tight for this example.

\subsection{Notations and preliminaries}
\label{sec:preliminaries}
For general references on real algebraic geometry, and in particular on real algebraic curves and their Jacobians, we refer the reader to \cites{grossRealAlgebraicCurves1981a,scheidererRealEtaleCohomology1994,cilibertoRealAbelianVarieties1996a,bochnakRealAlgebraicGeometry1998a,natanzonModuliRealAlgebraic1999, mangolteRealAlgebraicVarieties2020a}.

Let $R$ be a real closed field. We denote by $\PP^n_R$ the $n$-dimensional projective space, seen as a scheme over $R$, and in particular $\PP^n = \PP^n_\RR$. A \emph{curve over} $R$ is a one-dimensional, connected,  projective variety (reduced separated scheme of finite type) over $R$, usually denoted by $X$. Unless otherwise stated, curves over $R$ are not assumed to be smooth or irreducible. A \emph{real curve} is a curve over $\RR$.

The set of $R$-points of a curve $X$ over $R$, also called the \emph{real locus}, is denoted $X(R)$. A (Weil or Cartier) divisor on $X$ is called \emph{totally real} if its support is included in $X(R)$.

If $R=\RR$ and the curve $X$ is smooth, then $X(\RR)$ is either empty or a disjoint union of smooth one-dimensional manifolds, each diffeomorphic to a circle $\SS^1$. Each such curve can equivalently be described as a smooth complex curve, which coincides with the complexification $X_\CC$ of $X$, equipped with an antiholomorphic involution $\sigma \colon X_\CC \to X_\CC$, and $X(\RR) = \mathrm{fix}(\sigma)$. If $X(\RR) \neq \emptyset$, we usually write $X(\RR) = S_1 \sqcup \dots \sqcup S_r$ with $S_i \cong \SS^1$.

\noindent The \emph{topological type} of a smooth curve $X$ over $R$ is the triple $(g,r,a)$, where
\begin{itemize}
    \item $g=g(X)$ is the genus of $X$;
    \item $r=r(X)$ is the number of (semialgebraically) connected components of $X(R)$;
    \item $a=a(X) = \begin{cases}
        0 & \text{ if $X \setminus X(R)$ has two connected components;}\\
        1 & \text{ if $X \setminus X(R)$ is connected.}
        \end{cases}$
\end{itemize}
We will call smooth curves with $a(X)=0$ \emph{dividing}, and those with $a(X)=1$ \emph{non dividing}. The admissible topological types, i.e., those for which there exists a smooth curve over $R$ with topological type $(g,r,a)$, are exactly those which satisfy:
\begin{enumerate}
    \item $0\le r \le g+1$;
    \item if $r=0$ then $a=1$, and if $r=g+1$ then $a=0$;
    \item if $a=0$ then $r \equiv g+1\ \mathrm{mod}\ 2$;
\end{enumerate}
see e.g.~\cites{grossRealAlgebraicCurves1981a} for real curves and \cite{scheidererRealEtaleCohomology1994}*{Sec.~20.1.6} more generally for curves over $R$.

If $X$ is a curve over $\R$, we denote by $\mathrm{Pic}(X)$ the Picard group of $X$. If $X$ is smooth, we let $J = \jac(X) \coloneqq \mathrm{Pic}^0(X)$ denote the Jacobian of $X$. It is an abelian variety over $R$ (see also \Cref{sec:periods} for the case $R = \RR$). Assume $X(R)\neq \emptyset$ and fix $P_0 \in X(R)$. The Abel--Jacobi map is the embedding
\begin{align*}
    \phi \colon X & \longrightarrow \jac(X) \\
    P & \longmapsto [P-P_0]
\end{align*}
and it is defined over $R$. In particular, $\phi(X(R)) \subset \jac(X)(R)$. The qualitative and quantitative relationship between $\phi(X(R))$ and $\jac(X)(R)$ is our main topic.

For any subset $A \subset \jac(X)$ and $m\in\NN$, we write $mA = \{ \, a_1+\cdots +a_m\in \jac(X) \colon a_1,\ldots,a_m \in A \, \}$.

\begin{lemma}
    \label{lem:0}
    Let $X$ be a smooth curve over $R$ such that $X(R) \neq \emptyset$, and let $\phi \colon X \to J=\jac(X)$ be the Abel--Jacobi map with base point $P_0 \in X(R)$. Let:
    \begin{itemize}
        \item $\begin{aligned}[t] N_1 = \inf\big\{ \, m \in \NN \mid \text{ every divisor of degree $m$ is linearly} & \text{ equivalent} \\[-.5em] & \text{to an effective totally real one} \,\big\}
        \end{aligned}$
        \item $\begin{aligned}[t] N_2 = \inf\big\{ \, m \in \NN \mid \text{ every divisor of degree at least $m$ is} & \text{ linearly equivalent} \\[-.5em] & \ \text{to an effective totally real one} \,\big\}
        \end{aligned}$
        \item  $N_3 = \inf\big\{ \, m \in \NN \mid m \phi(X(R)) = J(R)\,\big\}
        $
        \item  $N_4 = \inf\big\{ \, m \in \NN \mid \text{ for all } m'\ge m, \ m' \phi(X(R)) = J(R)\,\big\}
        $
    \end{itemize}
    Then $N_1=N_2=N_3=N_4$.
\end{lemma}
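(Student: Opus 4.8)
The plan is to re-express the first two quantities in terms of the Abel--Jacobi map, which reduces the statement to comparing $N_3$ and $N_4$. Fix the base point $P_0\in X(R)$ used to define $\phi$. Since $X(R)\neq\emptyset$, the divisor $mP_0$ has degree $m$ and is defined over $R$, so $\Pic^m(X)\neq\emptyset$ for every $m\in\NN$, and subtracting $m[P_0]$ gives a bijection $\Pic^m(X)\to\Pic^0(X)=J(R)$. Recalling that linear equivalence of divisors over $R$ is precisely equality of classes in $\Pic(X)$, a divisor $D$ of degree $m$ is linearly equivalent to an effective totally real one if and only if $[D]=[P_1]+\dots+[P_m]$ for some $P_1,\dots,P_m\in X(R)$, equivalently $[D]-m[P_0]=\phi(P_1)+\dots+\phi(P_m)\in m\phi(X(R))$. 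As $D$ runs over all degree-$m$ divisors, $[D]-m[P_0]$ runs over all of $J(R)$; since moreover $m\phi(X(R))\subseteq J(R)$ holds trivially, the condition ``every divisor of degree $m$ is linearly equivalent to an effective totally real one'' is equivalent to ``$m\phi(X(R))=J(R)$''. Taking infima (with the usual convention $\inf\emptyset=\infty$) yields $N_1=N_3$, and running the same argument simultaneously over all degrees $\ge m$ yields $N_2=N_4$.

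It then remains to prove $N_3=N_4$. The inequality $N_4\ge N_3$ is immediate from the definitions. For the reverse, the key observation is that $\phi(P_0)=[P_0-P_0]=0$, so $0\in\phi(X(R))$; appending one copy of $\phi(P_0)$ to a sum of $m$ elements of $\phi(X(R))$ shows that $m\phi(X(R))\subseteq(m+1)\phi(X(R))$ for every $m\in\NN$. Hence, if $m\phi(X(R))=J(R)$ for some $m$, then (the opposite inclusion being automatic) $m'\phi(X(R))=J(R)$ for all $m'\ge m$, so the set $\{\,m\in\NN\mid m\phi(X(R))=J(R)\,\}$ is upward closed and $N_4\le N_3$. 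This also covers the case in which the common value is $\infty$, and altogether $N_1=N_2=N_3=N_4$.

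I do not anticipate a genuine obstacle. The two points that need a little care are the standard identification $\Pic^m(X)\cong\Pic^0(X)=J(R)$ over the real closed field $R$ — which is exactly what the rational base point $P_0$ is there to provide — and the bookkeeping of quantifiers in the first paragraph, namely that ranging over all degree-$m$ divisors corresponds to ranging over all of $J(R)$. The only substantive idea is the innocuous-looking fact $0\in\phi(X(R))$: it is precisely what forces the ``degree $m$'' and ``degree $\ge m$'' thresholds to agree, and thereby links $N_1$ with $N_2$.
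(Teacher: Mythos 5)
Your proof is correct and follows essentially the same route as the paper: the identification $D\mapsto[D-(\deg D)P_0]$ gives $N_1=N_3$ and $N_2=N_4$, and the observation $0=[P_0-P_0]\in\phi(X(R))$ yields $m\phi(X(R))\subseteq(m+1)\phi(X(R))$ and hence $N_3=N_4$. Your write-up merely spells out the quantifier bookkeeping that the paper leaves implicit.
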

\begin{proof}
    Sending a divisor $D$ to the class $[D-(\deg D)P_0]$, by definition of the Jacobian and the Abel--Jacobi map, shows that $N_1=N_3$ and $N_2=N_4$. Since $P_0 \in X(R)$, we have $0 = [P_0-P_0] \in \phi(X(R))$. This implies $m \phi(X(R))\subset (m+1)\phi(X(R)) \subset J(R)$, proving that $N_3=N_4$.
\end{proof}
The totally real divisor threshold, introduced in \Cref{def:real_bound}, for smooth real curves then coincides with any of the integers in \Cref{lem:0}.

\section*{Acknowledgements}
We thank O.~Benoist for interesting and helpful discussions, in particular about \Cref{ex:div}, {and O.~de Gaay Fortman for his help in deducing \Cref{thm:B} (i) from \Cref{thm:semicontinuity}}. We also thank R.~de~Jong for clarifications and bibliographical references on the Bergman metric. 
L.~Baldi was funded by the Humboldt Research Fellowship for postdoctoral researchers. M.~Kummer was partially supported by DFG grant 502861109.

\section{Periods and metric properties of real curves and abelian varieties}
\label{sec:periods}
Let $X$ be a smooth real curve (i.e., a smooth curve over $\RR$, see \Cref{sec:preliminaries}), and let $\sigma \colon X_\CC \to X_\CC$ be the antiholomorphic involution. The antiholomorphic involution $\sigma$ acts naturally on the homology and cohomology of $X(\CC)$, see \cites{cilibertoRealAbelianVarieties1996a,mangolteRealAlgebraicVarieties2020a}: if $\gamma \in H_1(X(\CC), \ZZ)$ and $\omega \in H^0(X_\CC, \Omega)$, then we denote their images by $\gamma^\sigma$ and $\omega^\sigma$.
By abuse of notation, we will denote $H_1(X(\CC), \ZZ)$ by $H_1(X, \ZZ)$.
The complexification $J_\CC$ of the Jacobian $J = \jac(X)$ is isomorphic to the quotient $J_\CC \cong  H^0(X_\CC, \Omega)^* / H_1(X, \ZZ)$ and inherits an action of $\sigma$.

\subsection{Periods of real curves}
\label{sec:comessatti}
Given a smooth curve $X$ over $\RR$, in this section we discuss how to choose a basis of $H_1(X, \ZZ)$ which is compatible with $\sigma$.
We first notice the following lemma which is a consequence e.g.~of \cite{buserCounterexamplesSurfaceHomology2024}*{Lem.~2.1}. We state it here explicitly since we could not find a suitable reference. 
\begin{lemma}
    \label{lem:independent_real_components}
    Let $S_1$, \dots, $S_r$ be the connected components of $X(\RR)$.
    \begin{itemize}
        \item If $X$ is non dividing, then the fundamental classes $[S_1]$, \dots , $[S_r]$ can be extended to a basis of $H_1(X, \ZZ)$.
        \item If $X$ is dividing, then any subset of $r-1$ fundamental classes of real connected components can be extended to a basis of $H_1(X, \ZZ)$.
    \end{itemize}
\end{lemma}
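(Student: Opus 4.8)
\emph{Plan.} The plan is to derive both statements from one topological fact about a Riemann surface: if $c_1,\dots,c_k$ are pairwise disjoint simple closed curves on a closed, connected, orientable surface $F$ for which $F\setminus(c_1\cup\dots\cup c_k)$ is connected, then $[c_1],\dots,[c_k]$ span a direct summand of rank $k$ in $H_1(F,\ZZ)$, and hence extend to a $\ZZ$-basis. Since $X(\CC)$ is a closed connected orientable surface of genus $g$ and $H_1(X,\ZZ)=H_1(X(\CC),\ZZ)$ is free of rank $2g$, applying this with $F=X(\CC)$ and the $c_i$ chosen among the $S_i$ will give the lemma, provided the appropriate complements are connected.

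\emph{The topological fact.} This is essentially \cite{buserCounterexamplesSurfaceHomology2024}*{Lem.~2.1}, and I would give a short direct argument. Let $W\subset F$ be the complement of an open tubular neighbourhood of $c_1\cup\dots\cup c_k$; then $F\setminus\bigcup_i c_i$ and $W$ have the same number of connected components. By excision, $H_1(F,W)$ is free of rank $k$ (each annular neighbourhood, taken relative to its two boundary circles, contributes a copy of $\ZZ$), and the map $j\colon H_1(F,\ZZ)\to H_1(F,W)\cong\ZZ^k$ from the long exact sequence of the pair is, up to signs, $w\mapsto(\langle w,c_i\rangle)_{i}$, with $\langle\,,\,\rangle$ the intersection pairing. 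Exactness of $H_1(F)\xrightarrow{j}H_1(F,W)\to\tilde H_0(W)\to\tilde H_0(F)=0$ shows that $j$ is surjective exactly when $W$, hence $F\setminus\bigcup_i c_i$, is connected. Assuming this, pick $w_j\in H_1(F,\ZZ)$ with $\langle w_j,c_i\rangle=\delta_{ij}$; since the $c_i$ are disjoint embedded circles we also have $\langle c_i,c_{i'}\rangle=0$. Pairing a relation $\sum_i n_i[c_i]=0$ against $w_j$ forces $n_j=0$, so the $[c_i]$ are $\ZZ$-linearly independent; and pairing $d\,w=\sum_i n_i[c_i]$ (for $d\ge1$ and $w\in H_1(F,\ZZ)$) against $w_j$ forces $d\mid n_j$ for all $j$, so that $w-\sum_i(n_i/d)[c_i]$ is a torsion class, hence zero. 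Thus the sublattice spanned by the $[c_i]$ is a direct summand, and it extends to a basis of $H_1(F,\ZZ)$.

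\emph{The two cases.} It then remains to check the connectivity hypothesis. If $X$ is non dividing, then by definition of $a(X)=1$ the set $X(\CC)\setminus X(\RR)=X(\CC)\setminus(S_1\cup\dots\cup S_r)$ is connected, and the fact applies directly to $c_i=S_i$. If $X$ is dividing, write $X(\CC)\setminus X(\RR)=X^+\sqcup X^-$ with $\sigma(X^+)=X^-$ and $X(\RR)=\partial\overline{X^+}=\partial\overline{X^-}$; by relabelling it suffices to treat $S_1,\dots,S_{r-1}$. Here $X(\CC)\setminus(S_1\cup\dots\cup S_{r-1})=X^+\sqcup X^-\sqcup S_r$, and this is connected: every point of $S_r$ lies in $\overline{X^+}\cap\overline{X^-}$, so each of $X^+\cup S_r$ and $X^-\cup S_r$ is connected, hence so is their union. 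Applying the fact to $c_i=S_i$ for $i=1,\dots,r-1$ finishes the dividing case. (For context: orienting each $S_i$ as a boundary component of $\overline{X^+}$ gives the relation $[S_1]+\dots+[S_r]=0$, which is precisely why one cannot take all $r$ of the classes when $X$ is dividing.)

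\emph{Main obstacle.} The homological input is routine; the step that needs genuine care is the point-set topology of the dividing case, namely verifying that removing $r-1$ of the $r$ real ``walls'' $S_i$ still leaves $X^+$ and $X^-$ joined through the surviving wall $S_r$, and that $X^+\sqcup X^-\sqcup S_r$ is honestly connected rather than merely of connected closure in $X(\CC)$. Once that is settled, the topological fact closes the proof with no further work.
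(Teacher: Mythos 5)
Your proof is correct, and it takes essentially the route the paper intends: the paper gives no argument of its own but only cites \cite{buserCounterexamplesSurfaceHomology2024}*{Lem.~2.1}, which is precisely the ``topological fact'' you isolate (disjoint simple closed curves with connected complement span a primitive sublattice of $H_1$). Your derivation of that fact from the long exact sequence of the pair together with the intersection-dual classes, and your verification of the connectivity hypothesis in the dividing case via $X^+\cup S_r\cup X^-$, correctly supply the details the paper leaves to the reference.
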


\begin{definition}
    Let $X$ be a smooth real curve of genus $g$, and let $\sigma$ be the antiholomorphic involution. A basis $(a_1, \dots , a_g ; b_1, \dots , b_g)$ of $H_1(X, \ZZ)$ is called a \emph{Comessatti basis} if:
    \begin{itemize}
        \item $a_i \cdot a_j = b_i \cdot b_j = 0$ and $a_i \cdot b_j = \delta_{ij}$ for $i,j = 1, \dots, g$ (i.e., the basis is symplectic);
        \item $a_i = a_i^\sigma$ for $i=1, \dots g$.
    \end{itemize}
\end{definition}

The existence of such a basis of homology was first proved by Comessatti \cites{comessattiSulleVarietaAbeliane1925a, comessattiSulleVarietaAbeliane1926}. For more recent references, we refer the reader to \cites{silholRealAlgebraicSurfaces1989a,cilibertoRealAbelianVarieties1996a}, where Comesatti bases are called \emph{semireal} and \emph{pseudonormal}, respectively. Our goal in the following is to describe explicit Comessatti bases, using topological models for real curves which can be found e.g.~in \cite{natanzonModuliRealAlgebraic1999}*{Sec.~1}. For a related discussion, see also \cite{vinnikovSelfadjointDeterminantalRepresentations1993}. Hereafter we summarize Natanzon's construction, also to fix the notations which will be used later.

Take an orientable topological surface $Z$ of genus $\tilde{g}$ with $k = m+r$ holes. Equip it with a Riemann surface structure $Z^+$, and consider an atlas of holomorphic charts ${(U_i, z_i)}$, with $Z^+(\CC) = \bigcup U_i$. The conjugate atlas ${(U_i, \overline{z}_i)}$ gives another Riemann surface structure $Z^-$ on $Z$. Consider the natural map $\alpha \colon Z^+ \to Z \to Z^-$, which is antiholomorphic. The complex structures on $Z^+$ and $Z^-$ induce metrics of constant curvature on $Z^+$ and $Z^-$, and $\alpha$ is an isometry with respect to them.

Consider now geodesics $S_1, \dots , S_r, R_1, \dots , R_m$ around the $k = r+m$ holes on $Z^+$. These geodesics are the boundary of a compact orientable surface $\tilde{Z}^+$. Set $\tilde{Z}^- = \alpha(\tilde{Z}^+)$, and identify the boundaries $\partial \tilde{Z}^+ = \bigcup S_i \cup \bigcup R_j $ and $\partial \tilde{Z}^- = \bigcup \alpha(S_i) \cup \bigcup \alpha(R_j)$ as follows:
\begin{itemize}
    \item For $i=1, \dots , r$, identify $S_i$ and $\alpha(S_i)$ by means of $\alpha$.
    \item For $j = 1, \dots , m$, consider isometries $\pi_j \colon R_j \to R_j$ without fixed points such that $\pi_j \circ \pi_j = \mathrm{id}$. Now identify $R_j$ and $\alpha(R_j)$ by means of $\alpha \circ \pi_j$.
\end{itemize}
With these $k=r+m$ identifications we obtain a compact Riemann surface (or, a smooth projective algebraic curve) $X$ of genus $g = 2\tilde{g} +k-1$, see \Cref{fig:real_homology}. The map $\alpha$ induces an antiholomorphic involution $\sigma \colon X(\CC) \to X(\CC)$ such that $X(\RR) = S_1 \sqcup \dots \sqcup S_r$. The real curve $(X, \sigma)$ is dividing if and only if $m=0$, and hence the real curve $(X, \sigma)$ has topological type $(2\tilde{g}+r-1, r, 0)$ if $m = 0$, and $(2\tilde{g}+r+m-1, r, 1)$ if $m \ge 1$ (notice that we use a different notation for the topological type compared to \cite{natanzonModuliRealAlgebraic1999}). We call any curve constructed in this way a \emph{Natanzon model of type $(\tilde{g}, r, m)$}.

Every smooth real curve is topologically equivalent to (possible many) Natanzon models, see \cite{natanzonModuliRealAlgebraic1999}*{Th.~1.1 and 1.2}. This means that, given any such curve $(X, \sigma)$ there exists a model $(X', \sigma')$ and a homeomorphism $\phi \colon X(\C) \to X'(\C)$ such that $\sigma' \circ \phi = \phi \circ \sigma$.

We now describe how the choice of a Natanzon model $X'$ of type $(\tilde{g}, r, m)$ for a curve $X$ over $\RR$ canonically determines a Comessatti basis of $H_1(X, \ZZ)$. To do that, we more precisely describe the action of $\sigma$ on a model of a real curve. In \Cref{fig:real_homology}, outside of the annuli bounded by the gray loops surrounding $R_1, \dots , R_m$, the action of $\sigma$ is just the reflection along a vertical plane passing through $S_1, \dots, S_r, R_1, \dots, R_m$. On the other hand, the action of $\sigma$ inside the annuli bounded by the gray loops is a vertical reflection composed with a \emph{Dehn twist}, see \Cref{fig:antiholomorphic_involution}.

\begin{figure}
    \begin{center}
        \tikzset{every picture/.style={line width=0.75pt}} 
        \includegraphics[width=0.9\linewidth]{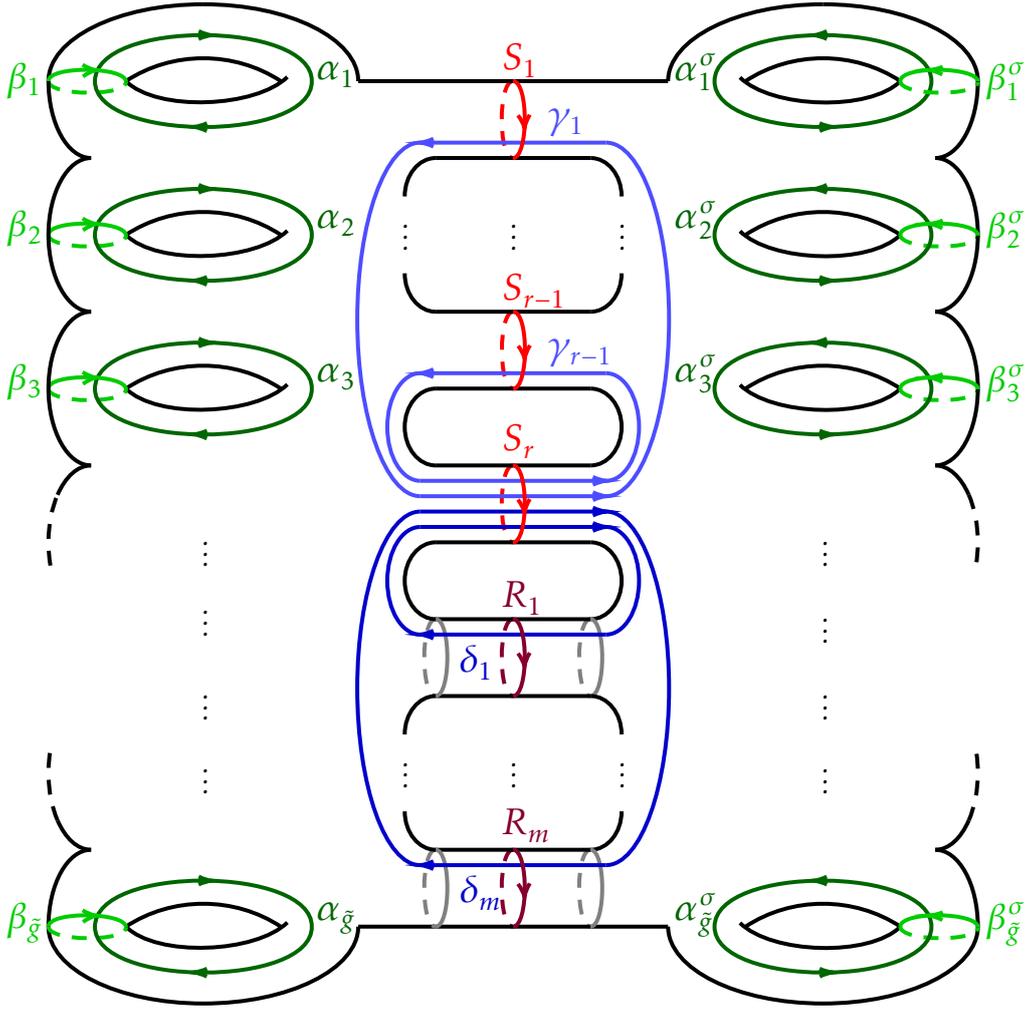}
    \end{center}
    \caption{Pseudonormal basis of homology}
    \label{fig:real_homology}
\end{figure}

\begin{figure}
    \begin{center}
        \tikzset{every picture/.style={line width=0.75pt}} 
        \includegraphics[width=0.9\linewidth]{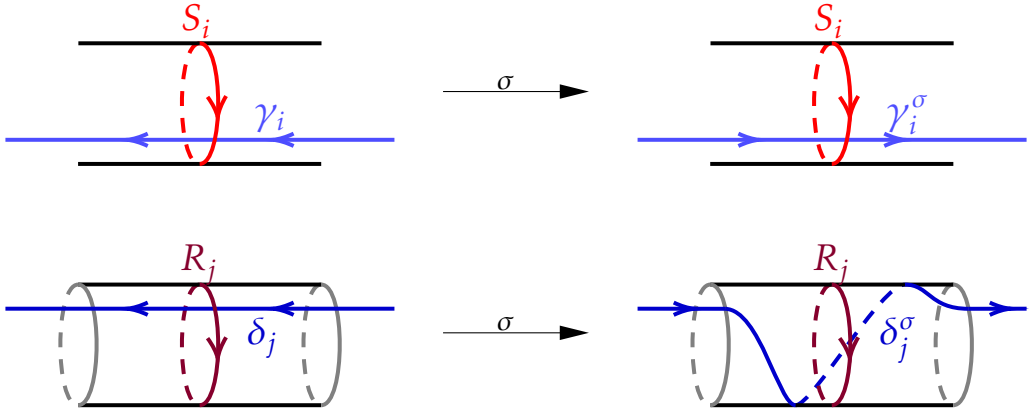}
    \end{center}
    \caption{Action of $\sigma$}
    \label{fig:antiholomorphic_involution}
\end{figure}
We now explicitly describe a Comessatti basis $(a_1, \dots , a_g; b_1, \dots , b_g)$ of $H_1(X, \ZZ)$, for  which we refer again to \Cref{fig:real_homology}.
Let $X$ be a curve over $\RR$ with a Natanzon model $X'$ of type $(\tilde{g}, r , m)$. The model $X'$ is constructed by taking a genus $\tilde{g}$ surface $Z$ with $r+m$ punctures. Take the symplectic basis of homology $([\alpha_1], \dots , [\alpha_{\tilde{g}}]; [\beta_1], \dots , [\beta_{\tilde{g}}])$ for $Z$, see \Cref{fig:real_homology}, constructed with loops not intersecting the geodesics $S_1, \dots , S_r, R_1, \dots , R_m$. These geodesics are always oriented as the boundary of the surface $\tilde{Z}^+$. 

Now assume that $r > 0$, and fix distinct, ordered points $P_1, \dots , P_{r-1}, Q_1, \dots , Q_m$ on the \emph{base loop} $S_r$. Now connect $P_1, \dots , P_{r-1}, Q_1, \dots , Q_m$ respectively with $S_1, \dots, S_r, R_1, \dots, R_m$ with simple curves on the surface $\tilde{Z}^+$, and extend those $r-1+m$ curves with their symmetric images in $\tilde{Z}^+$, see \Cref{fig:antiholomorphic_involution}. In this way, we obtain simple closed loops $\gamma_1, \dots , \gamma_{r-1}, \delta_1, \dots , \delta_{m}$ on $X$. The orientations of these loops are chosen in such a way that $S_i \cdot \gamma_i = R_j \cdot \delta_j = 1$ holds for $i=1, \dots , r-1$ and $j=1, \dots , m-1$.
When $r=0$, i.e., when the curve has no real points, we cannot take $S_r$ as a base loop. But the same construction can be performed taking $R_m$ as a base loop instead of $S_r$, covering also the case $r=0$.

We are now ready to define the basis $(a_1, \dots , a_g ; b_1, \dots , b_g)$ of $H_1(X, \ZZ)$ associated with a Natanzon model of type $(\tilde{g}, r, m)$. We define:
\begin{itemize}
    \item $a_{i} = [\alpha_i +\alpha_i^\sigma]$ for $i = 1, \dots , \tilde{g}$;
    \item $a_{\tilde{g}+i} = [\beta_{\tilde{g}-i+1} + \beta_{\tilde{g}-i+1}^\sigma]$ for $i = 1, \dots , \tilde{g}$;
    \item $a_{2\tilde{g}+i} = [R_{i}]$ for $i = 1, \dots, m$;
    \item $a_{2\tilde{g}+m+i} = [S_{i}]$ for $i = 1, \dots, r-1$;
    \item $b_{i} = [\beta_i]$ for $i = 1, \dots, \tilde{g}$;
    \item $b_{\tilde{g}+i} = [\alpha_{\tilde{g}-i+1}^\sigma]$ for $i = 1, \dots , \tilde{g}$;
    \item $b_{2\tilde{g}+i} = [\delta_{i}]$ for $i = 1, \dots, m$;
    \item $b_{2\tilde{g}+m+i} = [\gamma_{i}]$ for $i = 1, \dots, r-1$.
\end{itemize}
The basis $(a_1, \dots, a_g ; b_1, \dots , b_g)$ is clearly symplectic. Moreover, the action of $\sigma$ on homology in this basis is given by:
\begin{itemize}
    \item $a_{i}^{\sigma} = a_i$ for $i = 1, \dots , 2\tilde{g}+m+r-1 = g$;
    \item $b_i^\sigma = -b_i+a_{2\tilde{g}-i+1}$ for $i = 1, \dots , 2\tilde{g}$;
    \item $b_{2\tilde{g}+i}^\sigma = -b_{2\tilde{g}+i}+a_{2\tilde{g}+i}$ for $i = 1, \dots, m$;
    \item $b_{2\tilde{g}+m+i} = -b_{2\tilde{g}+m+i}$ for $i = 1, \dots, r-1$.
\end{itemize}
and this is a Comessatti basis.

Next, we consider a basis of \emph{real normalized differentials} $\omega_1, \dots, \omega_g$, i.e., such that $\int_{a_i}\omega_j = \delta_{ij}$ and $\omega^\sigma = \omega$. Write $b_i^\sigma = -b_i + \sum _k{m_{ik}} a_k$. Notice that
\[
\overline{\int_{b_i}\omega_j} = \int_{b_i^\sigma}\omega_j^\sigma = - \int_{b_i} \omega_j + \sum_k m_{ik} \int_{a_k} \omega_j = - \int_{b_i} \omega_j + m_{ij} 
\]
which implies that 
$
    2 \mathrm{Re}\left(\int_{b_i}\omega_j\right) = m_{ij} \in \ZZ
$.
Denote by $M = (m_{ij}) \in \ZZ^{g \times g}$ the \emph{reflection matrix}. In terms of the period matrix  $\Pi$ with respect to the bases $\omega_1, \dots , \omega_g$ and $(a_1, \dots, a_g; b_1, \dots , b_g)$, the above computation shows that 
\begin{equation}
    \label{eq:period_matrix}
    \Pi = \left( \begin{array}{ccc|ccc}
        1 & {} & {} & {} & {} & {}\\
        {} & {\ddots} & {} & \multicolumn{3}{c}{\frac{1}{2}M + iT} \\
        {} & {} & {1} & {} & {} & {}
    \end{array} \right)
\end{equation}
with $T$ a real, symmetric, positive definite matrix, thanks to Riemann's bilinear relations, and  $M\in \ZZ^{g\times g}$. We call $\tau = \frac{1}{2}M + iT \in \C^{g\times g}$ the \emph{Riemann matrix}. This special form for the period matrix only requires the basis of $H_1(X, \ZZ)$ to be a Comessatti basis.

Write $I_s$ for the $s\times s$ identity matrix, $\mathbf{0}_{s_1 \times s_2}$ and $\mathbf{1}_{s_1 \times s_2}$ for the $s_1 \times s_2$ matrices with all zeros and ones, respectively, and finally let
\[
    J_s=\overbrace{
        \left(\begin{array}{ccc}
            {} & {} & {1} \\
            {} & \iddots & {} \\
            {1} & & {}
        \end{array}\right)
        }^{s} 
\]
be the $s \times s$ matrix with ones on the principal antidiagonal, and zeros otherwise. If the basis of $H_{1}(X, \ZZ)$ is the one associated with a Natanzon model of type $(\tilde{g},r,m)$, then the reflection matrix has the form
\begin{equation}
    \label{eq:reflection_1}
    M=
    \left(
    \begin{array}{ccc}
        J_{2\tilde{g}} & \mathbf{0}_{2\tilde{g}\times m} & \mathbf{0}_{2\tilde{g}\times (r-1)} \\ \mathbf{0}_{m\times 2\tilde{g}}
         & I_m & \mathbf{0}_{m\times(r-1)} \\
         \mathbf{0}_{(r-1)\times 2\tilde{g}} & \mathbf{0}_{(r-1)\times m} & \mathbf{0}_{(r-1)\times (r-1)}
    \end{array}\right)
\end{equation}
with base loop $S_r$, while it has the form
\begin{equation}
    \label{eq:reflection_2}
    M=
    \left(
    \begin{array}{ccc}
    J_{2\tilde{g}} & \mathbf{0}_{2\tilde{g}\times (m-1)} & \mathbf{0}_{2\tilde{g}\times r} \\ 
    \mathbf{0}_{(m-1)\times2\tilde{g}} & \mathbf{1}_{(m-1)\times(m-1)}-I_{m-1} & \mathbf{1}_{(m-1)\times r} 
        \\ 
        \mathbf{0}_{r\times 2\tilde{g}} & \mathbf{1}_{(m-1)\times r} & \mathbf{1}_{r \times r}
    \end{array}\right)
\end{equation}
with base loop $R_m$.

We have then proved the following theorem, which is essentially due to Comessatti. Our contribution lies in its proof, based on the explicit construction of models for real curves, and on the explicit choice of the Comessatti basis of homology.

\begin{theorem}[Comessatti]\label{thm:excombasis}
    Let $X$ be a smooth real curve. Then there exists a Comessatti basis of $H_1(X, \ZZ)$. In particular, given any Natanzon model of type $(\tilde{g}, r, m)$ for $X$, the associated basis of homology is a Comessatti basis, and the reflection matrix $M$ takes the special form \eqref{eq:reflection_1} or \eqref{eq:reflection_2}, respectively if the base loop is $S_r$ or $R_m$. 
\end{theorem}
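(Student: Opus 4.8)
The plan is to establish the theorem for the explicit basis attached to a Natanzon model, where the antiholomorphic involution $\sigma$ is completely described, and then to transfer the conclusion to an arbitrary curve. First I would invoke Natanzon's topological classification (\cite{natanzonModuliRealAlgebraic1999}*{Th.~1.1 and 1.2}): every smooth real curve $(X,\sigma)$ is homeomorphic to a Natanzon model $(X',\sigma')$ of some type $(\tilde g, r, m)$ by a $\sigma$-equivariant, orientation-preserving homeomorphism. Such a homeomorphism induces an isomorphism $H_1(X,\ZZ)\cong H_1(X',\ZZ)$ that preserves the intersection form and intertwines the actions of $\sigma$ and $\sigma'$, hence it carries the basis attached to $X'$ to a symplectic basis of $H_1(X,\ZZ)$ with the same behaviour under conjugation and the same reflection matrix. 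So it suffices to treat $X=X'$ a Natanzon model with the basis $(a_1,\dots,a_g;b_1,\dots,b_g)$ constructed above, for which one only needs to (i) confirm the displayed action of $\sigma$ on this basis, (ii) check the symplectic relations, and (iii) read off $M$.

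For (i), the geometric inputs are: $\sigma$ equals the antiholomorphic reflection $\alpha$ away from the $m$ glued annuli, so it is orientation-reversing and an involution; it is the identity on each $S_i$ and a fixed-point-free involution $\pi_j$ on each core circle $R_j$; and the boundary classes obey the single relation $\sum_{i=1}^r[S_i]+\sum_{j=1}^m[R_j]=0$ in $H_1(X,\ZZ)$, since their sum bounds $\tilde Z^+$ (compatibly with \Cref{lem:independent_real_components}). Since $\alpha_i,\beta_i$ lie in the interior of $\tilde Z^+$ one has $(\alpha_i^\sigma)^\sigma=\alpha_i$, so $a_i=[\alpha_i+\alpha_i^\sigma]$ and $a_{\tilde g+i}=[\beta_{\tilde g-i+1}+\beta_{\tilde g-i+1}^\sigma]$ are $\sigma$-invariant, and rewriting $b_i=[\beta_i]$ and $b_{\tilde g+i}=[\alpha_{\tilde g-i+1}^\sigma]$ in the basis gives $b_i^\sigma=-b_i+a_{2\tilde g-i+1}$ for $i\le 2\tilde g$; the classes $[S_i]$ and $[R_j]$ are $\sigma$-invariant, so all the $a_k$ are fixed. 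Finally, writing each $\gamma_i$ (resp.\ $\delta_j$) as an arc in $\tilde Z^+$ followed by an arc along a boundary circle followed by the reversed $\sigma$-image of the first arc, applying $\sigma$, and using $\pi_j(\text{arc})=[R_j]-(\text{arc})$ on the twisted circles, yields $\gamma_i^\sigma=-\gamma_i$ and $\delta_j^\sigma=-\delta_j+R_j$ for the base loop $S_r$, i.e.\ the formulas stated before the theorem.

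For (ii), I would use that $([\alpha_i];[\beta_i])$ is symplectic on $Z$, that the interiors of $\tilde Z^+$ and $\tilde Z^-$ are disjoint, that $\sigma$ reverses orientation (so $\gamma^\sigma\cdot\delta^\sigma=-\gamma\cdot\delta$), and that the $\gamma_i,\delta_j$ are chosen with $S_i\cdot\gamma_i=R_j\cdot\delta_j=1$ and no further intersections; substituting the definitions, each required product collapses to its standard value, the two ``mixed'' terms cancelling by orientation reversal. Since the intersection form on $H_1(X,\ZZ)$ is unimodular, any $2g$ classes with the standard Gram matrix automatically form a basis, so by (i) it is a Comessatti basis; via the reduction above this already proves the existence statement. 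For (iii), $m_{ik}$ is by definition the coefficient of $a_k$ in $b_i^\sigma+b_i$, and the formulas from (i) with base loop $S_r$ put the antidiagonal identity $J_{2\tilde g}$ in the first $2\tilde g$ rows and columns, the identity $I_m$ in the next diagonal block and zeros in the last $r-1$ rows, which is precisely \eqref{eq:reflection_1}.

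The case of base loop $R_m$ (the only option when $r=0$) is where I expect the real work, and it is the main obstacle. There the base-loop endpoints of $\gamma_1,\dots,\gamma_r$ and $\delta_1,\dots,\delta_{m-1}$ are moved by the fixed-point-free $\pi_m$, so closing each loop now requires an arc along $R_m$, and applying $\sigma$ introduces an extra term $[R_m]$; substituting $[R_m]=-\sum_i[S_i]-\sum_{j<m}[R_j]$ from the boundary relation couples all these loops and, after fixing the orientation conventions in \Cref{fig:real_homology,fig:antiholomorphic_involution}, produces exactly the blocks $\mathbf 1_{(m-1)\times(m-1)}-I_{m-1}$, $\mathbf 1_{(m-1)\times r}$ and $\mathbf 1_{r\times r}$ of \eqref{eq:reflection_2}. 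Getting these signs right, so that the ones-blocks appear with the correct sign, is the delicate point; everything else is bookkeeping in the figures.
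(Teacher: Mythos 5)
Your proposal follows the same route as the paper: reduce to a Natanzon model via the $\sigma$-equivariant homeomorphism from Natanzon's classification, verify the stated action of $\sigma$ and the symplectic relations for the explicit basis $(a_1,\dots,a_g;b_1,\dots,b_g)$, and read off the reflection matrix $M$; the paper in fact records these verifications with less detail than you supply. The sign bookkeeping for the base loop $R_m$ that you flag as the delicate point is likewise left implicit in the paper, so nothing in your outline diverges from or falls short of the paper's own argument.
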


Many properties of the Jacobian $J=\jac(X)$ of a real curve can be directly deduced from the explicit description of the period and reflection matrices given above.
For instance, one can deduce the following corollary (see also \cite{grossRealAlgebraicCurves1981a}*{Prop. 1.1, 3.2, and 3.3} for a proof that does not make use of the special form of $M$).

\begin{corollary}[Comessatti]
    \label{cor:comessatti}
    Let $X$ be a smooth real curve and $J = \jac(X)$ its Jacobian.
    \begin{enumerate}
        \item If $X(\RR) \neq \emptyset$, then $J(\RR) \cong (\RR/\ZZ)^g\times (\ZZ/2)^{r-1}$ (as a Lie group).
        \item If $X(\RR) = \emptyset$ and $g$ even, then $J(\RR) \cong (\RR/\ZZ)^g$.
        \item If $X(\RR) = \emptyset$ and $g$ odd, then $J(\RR) \cong (\RR/\ZZ)^g\times (\ZZ/2)$.
    \end{enumerate}
\end{corollary}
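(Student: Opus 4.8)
The plan is to read $J(\RR)$ off directly from the period matrix \eqref{eq:period_matrix} together with the explicit form of the reflection matrix $M$ provided by \Cref{thm:excombasis}. Fix a Comessatti basis and the associated real normalized differentials $\omega_1,\dots,\omega_g$, so that $J_\CC \cong \CC^g/\Lambda$, where $\Lambda \subset \CC^g$ is the lattice spanned by the $2g$ columns of $\Pi = (I_g \mid \tfrac{1}{2}M + iT)$. Because $\omega_j^\sigma = \omega_j$ and $a_i^\sigma = a_i$, the antiholomorphic involution $\sigma$ on $J_\CC$ acts in these coordinates as complex conjugation $z \mapsto \bar z$ on $\CC^g$; one checks that conjugation maps $\Lambda$ into itself precisely because $M$ is symmetric (the Riemann matrix $\tau$ is) and $b_j^\sigma = -b_j + \sum_k m_{jk} a_k$. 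Consequently $J(\RR) = \{\, z \in \CC^g : \bar z - z \in \Lambda \,\}/\Lambda$.

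Next I would solve this congruence explicitly. Writing $z = x + iy$ with $x,y \in \RR^g$, the condition $\bar z - z = -2iy \in \Lambda$ means $-2iy = p + \tfrac{1}{2}Mq + iTq$ for some $p,q \in \ZZ^g$; comparing imaginary and real parts gives $y = -\tfrac{1}{2}Tq$ and $Mq \in 2\ZZ^g$. Taking $q = 0$ shows $J(\RR)_0 \cong \RR^g/\ZZ^g = (\RR/\ZZ)^g$, and two such points lie in the same connected component of $J(\RR)$ exactly when their vectors $q$ agree modulo $2$ (here one uses that $T$ is invertible). Hence the component group $\pi_0(J(\RR))$ is isomorphic to $\ker(\bar M \colon (\ZZ/2)^g \to (\ZZ/2)^g)$, where $\bar M$ denotes the reduction of $M$ modulo $2$; in particular it is an $\F_2$-vector space of dimension $g - \operatorname{rank}_{\F_2}\bar M$. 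Since $(\RR/\ZZ)^g$ is divisible, hence injective as a $\ZZ$-module, the extension $0 \to J(\RR)_0 \to J(\RR) \to \pi_0(J(\RR)) \to 0$ splits, so $J(\RR) \cong (\RR/\ZZ)^g \times (\ZZ/2)^{\,g - \operatorname{rank}_{\F_2}\bar M}$.

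It then remains to compute $\operatorname{rank}_{\F_2}\bar M$ in each case, using $g = 2\tilde{g} + m + r - 1$, the fact that each $J_s$ is a permutation matrix (so of full rank $s$), and the observation that over $\F_2$ the matrix $\mathbf 1_{k\times k} + I_k$ has determinant $\equiv k+1$, hence is invertible when $k$ is even and has a one-dimensional kernel (spanned by the all-ones vector) when $k$ is odd. If $X(\RR) \neq \emptyset$, then $r \geq 1$ and we use \eqref{eq:reflection_1}: here $\bar M = \diag(J_{2\tilde{g}}, I_m, \mathbf 0_{(r-1)\times(r-1)})$ has rank $2\tilde{g} + m$, so $g - \operatorname{rank}_{\F_2}\bar M = r-1$, which gives (i). If $X(\RR) = \emptyset$, then $r = 0$ and \eqref{eq:reflection_2} reduces modulo $2$ to $\bar M = \diag(J_{2\tilde{g}}, \mathbf 1_{(m-1)\times(m-1)} + I_{m-1})$; since $g = 2\tilde{g} + m - 1$, genus $g$ even forces $m-1$ even, so the second block is invertible, $\pi_0(J(\RR))$ is trivial, and $J(\RR) \cong (\RR/\ZZ)^g$, which is (ii); genus $g$ odd forces $m-1$ odd, so the rank drops by one, $\pi_0(J(\RR)) \cong \ZZ/2$, and $J(\RR) \cong (\RR/\ZZ)^g \times \ZZ/2$, which is (iii).

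The one genuinely delicate point is the first step: verifying that, in the coordinates coming from real normalized differentials, the real structure on $J_\CC$ is literally complex conjugation and that $\Lambda$ is conjugation-stable. This is exactly where the Comessatti conditions $a_i^\sigma = a_i$, and the resulting symmetry of $M$, enter. Everything afterwards is bookkeeping: solving one linear congruence over $\ZZ$ and computing two small ranks over $\F_2$. It is worth recording that $J(\RR)$ is always nonempty (it contains the origin) and has real dimension $g$, so the identification of the identity component is not vacuous in cases (ii) and (iii).
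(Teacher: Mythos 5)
Your proof is correct and follows exactly the route the paper intends: the corollary is stated as a consequence of the explicit period and reflection matrices, and your fixed-point computation (solve $2iy=k+(\tfrac12M+iT)l$, identify $\pi_0(J(\RR))$ with $\ker(\bar M)$ over $\F_2$, then compute $\operatorname{rank}_{\F_2}\bar M$ for the two forms \eqref{eq:reflection_1} and \eqref{eq:reflection_2}) is precisely the argument the paper carries out in the proof of \Cref{cor:volume}. One cosmetic remark: the conjugation-stability of $\Lambda$ needs only that $M$ has integer entries (conjugating the column $\tfrac12 Me_j+iTe_j$ yields $Me_j$ minus that column), not its symmetry; everything else, including the rank computations in the three cases, checks out.
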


\subsection{Volumes of principally polarized real abelian varieties}
\label{sec:volume}
We follow \cite{milneAbelianVarieties1986} to introduce principally polarized abelian varieties over $\RR$, and \cites{birkenhakeComplexAbelianVarieties2004,griffithsPrinciplesAlgebraicGeometry1978} for classical complex ones. Our definition is equivalent to the one used in \cites{silholRealAlgebraicSurfaces1989a,cilibertoRealAbelianVarieties1996a}.

An \emph{abelian variety over $\RR$} is an irreducible complete group variety over $\RR$. In particular, if $A$ is an abelian variety over $\RR$, then $A(\RR) \neq \emptyset$ since $0\in A(\RR)$. 
A \emph{principal polarization} on an abelian variety over $\RR$ is an isogeny $f \colon A \to A^\vee$ of degree $1$ such that its complexification $f_\C = \varphi_{\mathcal{L}}$ is the canonical map associated to some invertible sheaf $\mathcal{L}$ on $A_\C$.
If $A$ is an abelian variety over $\RR$ and $f$ is a principal polarization, we call $(A, f)$ a \emph{principally polarized real abelian variety}.

Let $(A, f)$ be a principally polarized real abelian variety. 
Write $A_{\C} \cong V / \Lambda$ and $A_{\C}^\vee \cong \hom_{\bar{\C}}(V, \C)/\Lambda^\vee$, where $W\coloneqq\hom_{\bar{\C}}(V, \C)$ denotes the $\C$-vector space of $\C$-antilinear functions. Then $f_\C \colon V / \Lambda \to W/\Lambda^\vee$, and there exists a unique $\C$-linear map $F \colon V \to W$ with $F(\Lambda) \subset \Lambda^\vee$ inducing $f_\C$ (see e.g.~\cite{birkenhakeComplexAbelianVarieties2004}*{Prop.~1.2.1}). Furthermore, the Hermitian form $H \colon V \times V \to \C$, $(z,w) \mapsto F(z)(w)$ is positive definite \cite{birkenhakeComplexAbelianVarieties2004}*{Th.~2.5.5 and Sec.~4.1}. The inverse $H^{-1}$ is also a positive definite Hermitian form. By parallel transport, the positive definite Hermitian form $H^{-1}$ on $V$, identified with the holomorphic tangent space of $A_\C$ at the origin, defines a translation invariant Hermitian (K\"ahler) metric $\dd s^2$ on $A_\C$ \cite{griffithsPrinciplesAlgebraicGeometry1978}*{p.~301}. Recall also that the real part of an Hermitian metric defines a Riemannian metric on the underlying real manifold \cite{griffithsPrinciplesAlgebraicGeometry1978}*{p.~28}.

\begin{definition}
    Let $(A, f)$ be a principally polarized real abelian variety. We call the translation invariant Hermitian metric $\dd s^2$ defined above the \emph{canonical Hermitian metric} of $(A, f)$, and $\mathrm{Re(\dd s^2)}$ the \emph{canonical Riemannian metric} of $(A, f)$.
\end{definition}

In the following, when referring to metric properties of (real) submanifolds of $A_\C$, we will always refer to the canonical Riemannian metric. 
In particular, the volume $\vol(A(\RR))$ is well defined because $A(\RR)$ is a $(\dim A)$-dimensional real submanifold of $A_\C(\C)$.

Generalizing the case of Jacobians of curves described in \Cref{sec:comessatti}, there exists 
a notion of \emph{Comessatti form} for the period matrix of a principally polarized real abelian variety $(A, f)$. This result is again due to Comessatti; for modern references we point the reader to \cites{silholRealAlgebraicSurfaces1989a, cilibertoRealAbelianVarieties1996a}. More precisely, the period matrix $\Pi$ can be taken as in \eqref{eq:period_matrix}, with $M \in \ZZ^{\dim A \times \dim A}$ symmetric and $T$ symmetric, real and positive definite. 
\begin{theorem}
    \label{thm:volume_identity}
    Let $(A, f)$ be a principally polarized abelian variety over $\RR$ and $A(\RR)_0$ the real connected component of the identity. Then
    \[
        \vol(A(\RR)_0) = (\det T)^{-\frac{1}{2}}
    \]
    where the volume is computed with respect to the canonical Riemannian metric of $(A, f)$, and $T$ is the imaginary part of a Riemann matrix for $(A, f)$ in the form \eqref{eq:period_matrix}. In particular, $\det T$ is an invariant of $(A, f)$, i.e., it does not depend on the choice of the period matrix.
\end{theorem}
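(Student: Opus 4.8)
The plan is to reduce the computation of $\vol(A(\RR)_0)$ to a determinant calculation by explicitly identifying the real tangent space $T_0 A(\RR)$ as a sublattice-quotient of $V$, and computing the covolume of the lattice it carries with respect to the inner product $\operatorname{Re}(H^{-1})$. First I would set up coordinates: write $A_\CC \cong V/\Lambda$ with $V \cong \CC^g$ and $\Lambda$ generated by the columns of the period matrix $\Pi = (I_g \mid \tau)$, $\tau = \tfrac12 M + iT$, relative to a $\CC$-basis of $V$ in which the principal polarization's Hermitian form and its inverse are expressed via $\tau$ in the standard way. The antiholomorphic involution $\sigma$ lifts to an $\RR$-linear (indeed $\CC$-antilinear) involution of $V$ preserving $\Lambda$; because the chosen homology basis is a Comessatti basis, $\sigma$ fixes the first $g$ lattice generators $a_1,\dots,a_g$ and sends $b_i \mapsto -b_i + \sum_k m_{ik} a_k$. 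Hence the $+1$-eigenspace $V^\sigma$ of $\sigma$ in $V$ is precisely $\RR^g = \operatorname{span}_\RR(a_1,\dots,a_g)$, and $A(\RR)_0$ is the image of $V^\sigma$ in $A_\CC$, a real torus $V^\sigma/(\Lambda \cap V^\sigma) = \RR^g/\ZZ^g$.

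Next I would compute the canonical Riemannian metric restricted to $V^\sigma$. The Hermitian form $H$ associated to the polarization satisfies, in these coordinates, $\operatorname{Im} H = E$ where $E$ is the symplectic form with $E(a_i,a_j)=E(b_i,b_j)=0$, $E(a_i,b_j)=\delta_{ij}$, and the Riemann relations give $H(z,w) = {}^t\bar z\, (\operatorname{Im}\tau)^{-1} w = {}^t\bar z\, T^{-1} w$ after the standard identification (here is where the normalized-differential description and Riemann's bilinear relations enter). The canonical Hermitian metric is $\dd s^2 = H^{-1}$, whose matrix is $T$; its real part restricted to the real subspace $V^\sigma = \RR^g$ (on which $H^{-1}$ is real-valued) is just the quadratic form with Gram matrix $T$. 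Therefore $A(\RR)_0$ is isometric to the flat torus $\RR^g/\ZZ^g$ equipped with the inner product $\langle u,v\rangle = {}^t u\, T\, v$, and its volume is the covolume of $\ZZ^g$ in that metric, namely $(\det T)^{1/2}$ — wait, here I must be careful with the normalization of $H^{-1}$ versus $H$: since $\dd s^2$ comes from $H^{-1}$ with matrix $T^{-1}\!$... let me instead just say the volume is $(\det(\text{Gram}))^{1/2}$ and track which of $T$, $T^{-1}$ is the Gram matrix of the canonical metric on $V^\sigma$, yielding $\vol(A(\RR)_0) = (\det T)^{-1/2}$ as claimed. Finally, invariance of $\det T$ follows formally: $\vol(A(\RR)_0)$ is defined intrinsically from $(A,f)$ (the canonical Riemannian metric depends only on the polarization, and $A(\RR)_0$ only on the real structure), so its value $(\det T)^{-1/2}$ cannot depend on the choice of Comessatti period matrix.

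The main obstacle I anticipate is bookkeeping the normalization constants correctly: there are several places where a factor of $2$ or an inversion $T \leftrightarrow T^{-1}$ can slip in — in passing from $H$ to $\dd s^2 = H^{-1}$, in identifying $\operatorname{Im}\tau = T$ versus $\tfrac12 T$, in the real-part convention $\operatorname{Re}(\dd s^2)$ for a Hermitian metric (some authors double it), and in the relation between the symplectic lattice pairing and $H$. I would pin these down by checking the rank-one case $g=1$ (an elliptic curve $\CC/(\ZZ + \tau\ZZ)$ with $\tau = \tfrac12 m + it$): there $A(\RR)_0 = \RR/\ZZ$ sits inside $\CC/\Lambda$ with the metric scaled so that the imaginary direction has the right length, and the length of $A(\RR)_0$ should come out to $t^{-1/2} = (\det T)^{-1/2}$; matching this fixes all constants. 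A secondary, purely expository obstacle is making precise the claim "$H^{-1}$ is real-valued on $V^\sigma$": this is because $\sigma$ is an anti-isometry for $H$ (it comes from a real structure compatible with the polarization), so $H$ restricted to the real points takes real values, and likewise for $H^{-1}$; I would state this as a short lemma rather than belabor it.
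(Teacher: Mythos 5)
Your overall strategy is the same as the paper's: identify $A(\RR)_0$ with the flat torus $\RR^g/\ZZ^g$ spanned by the first $g$ columns $e_1,\dots,e_g$ of the period matrix, and compute its volume as the square root of the determinant of the Gram matrix of the canonical Riemannian metric restricted to that real subspace. Your identification of $A(\RR)_0$ as the image of the $\sigma$-fixed subspace $V^\sigma=\mathrm{span}_\RR(e_1,\dots,e_g)$, with $\Lambda\cap V^\sigma=\ZZ^g$, is correct (the paper does this implicitly by integrating over $[0,1]^g$ in the $e$-coordinates), and your invariance argument at the end is the one the paper uses.

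The gap is exactly where you flag it, and it is not cosmetic: the entire quantitative content of the theorem is whether the Gram matrix of the canonical metric in the basis $e_1,\dots,e_g$ is $T$ or $T^{-1}$, i.e., whether the answer is $(\det T)^{1/2}$ or $(\det T)^{-1/2}$. You write $H(z,w)={}^t\bar z\,T^{-1}w$, then assert that the canonical metric ``has matrix $T$'' (which would give the wrong exponent), and then declare that careful tracking ``yields $(\det T)^{-1/2}$ as claimed'' --- which amounts to assuming the conclusion. The paper supplies this missing fact by citing \cite{birkenhakeComplexAbelianVarieties2004}*{Prop.~8.1.1}: the Gram matrix of the canonical metric in the basis $e_1,\dots,e_g$ is $T^{-1}$, after which $\vol(\RR^g/\ZZ^g)=\sqrt{\det T^{-1}}=(\det T)^{-1/2}$ is immediate. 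Your proposed $g=1$ sanity check can indeed fix the constant, but only if you name a non-circular normalization of the metric; ``scaled so that the imaginary direction has the right length'' presupposes the answer. Two clean anchors: (a) the normalization $\vol(A_\CC(\CC))=1$ (recorded in the paper after \Cref{cor:volume}) forces the metric for $g=1$ to be $t^{-1}\,\dd z\,\dd\bar z$ on $\CC/(\ZZ+\tau\ZZ)$, since the fundamental domain has Euclidean area $t$, whence $\len(\RR/\ZZ)=t^{-1/2}$; or (b) the condition $F(\Lambda)\subset\Lambda^\vee$ with $\deg f=1$ forces $H(v,w)={}^tv\,T^{-1}\bar w$, and the metric induced on the tangent space $V$ has the same Gram matrix $T^{-1}$. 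With that single fact pinned down, your argument coincides with the paper's proof.
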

\begin{proof}
    Let $g = \dim A$.
    If $A \cong V / \Lambda$, where $\Lambda$ are the columns of the period matrix $\Pi$ as in \eqref{eq:period_matrix}, then $e_1, \dots , e_g$ generate $V$, and it follows e.g.~from \cite{birkenhakeComplexAbelianVarieties2004}*{Prop.~8.1.1} that $T^{-1} = \mathrm{Im}(\tau)^{-1}$ is the Gram matrix of the Hermitian form $H^{-1}$ induced by the principal polarization $f$ with respect to the basis $e_1, \dots , e_g$. Thanks to Riemann's bilinear relations, $T$ is real, symmetric, positive definite, and we can write $T = B^t B$ for some $B \in \RR^{g\times g}$ such that $\det B >0$. 

    Set $u_i = B^{-1} e_i \in \RR^g$, so that $u_1, \dots , u_g$ is an orthonormal basis for $H^{-1}$. 
    If $z_1, \dots , z_g$ are the corresponding complex coordinates and $x_1, \dots , x_g$ are the real coordinates corresponding to $e_1, \dots , e_g$,
    then the canonical Hermitian metric is given by $\omega = \dd z_1 \dd \overline{z}_1 + \dots + \dd z_g \dd \overline{z}_g$ and we have 
    \[
        \vol(A(\RR)_0) = \int_{A(\RR)_0} \dd \vol_\omega =  \int_{[0,1]^g} \det B^{-1} \ \dd x_1 \cdots \dd x_g = \det B^{-1} = \det T ^{-\frac{1}{2}}
    \]
    where $\dd \vol_\omega$ denotes the volume form with respect to the canonical Riemannian metric $\mathrm{Re}(\omega)$.
\end{proof}

It is easy to deduce from \Cref{thm:volume_identity} how to read the full volume of $A(\RR)$ from the Riemann matrix.
\begin{corollary}
    \label{cor:volume}
    We follow the same notations of \Cref{thm:volume_identity} and \eqref{eq:period_matrix}. Let $g = \dim A$ and $\gamma = \rank_{\ZZ/2} M$ the rank of the reflection matrix $M$ modulo $2$. 
    Then $A(\RR)$ has $2^{g-\gamma}$ connected components and:
    \[
        \vol(A(\RR)) = 2^{g-\gamma}(\det T)^{-\frac{1}{2}}
    \]
\end{corollary}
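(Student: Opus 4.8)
The plan is to make the real structure on $A_\CC \cong V/\Lambda \cong \CC^g/\Lambda$ completely explicit from the Comessatti form \eqref{eq:period_matrix}, use it to count the connected components of $A(\RR)$, and then combine the translation invariance of the canonical Riemannian metric with \Cref{thm:volume_identity} to compute the volume of each component.

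First I would record that $\Lambda = \ZZ^g + \tau\ZZ^g$ with $\tau = \tfrac12 M + iT$, and observe that, since $\tau + \overline\tau = M \in \ZZ^{g\times g}$, ordinary complex conjugation on $\CC^g$ preserves $\Lambda$; it therefore descends to the anti-holomorphic involution of $A_\CC$ with fixed locus $A(\RR)$, which is exactly the real structure recorded by the Comessatti form (cf.\ \cites{silholRealAlgebraicSurfaces1989a,cilibertoRealAbelianVarieties1996a}). Writing a point of $V$ as $z = x + iy$ with $x, y \in \RR^g$, the relation $\overline z \equiv z \pmod\Lambda$ reads $-2iy = p + \tau q$ for some $p, q \in \ZZ^g$; comparing imaginary and real parts gives $y = -\tfrac12 Tq$ and $p = -\tfrac12 Mq$. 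So such $z$ exist exactly when $Mq \equiv 0 \pmod 2$, i.e.\ when the class of $q$ in $(\ZZ/2)^g$ lies in the kernel of $\overline M \colon (\ZZ/2)^g \to (\ZZ/2)^g$, the reduction of $M$ modulo $2$; this kernel has $2^{g-\gamma}$ elements, where $\gamma$ is the rank of $M$ modulo $2$.

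Then I would organise $A(\RR)$ accordingly: for each class $q \in \ker\overline M$ the set $\{\, x - \tfrac{i}{2}Tq : x \in \RR^g \,\}/\Lambda$ is connected and is the translate by the point $-\tfrac{i}{2}Tq$ of the identity component $A(\RR)_0 = \RR^g/\ZZ^g$ (using $\Lambda \cap \RR^g = \ZZ^g$, which holds because $T$ is nonsingular), while comparing imaginary parts shows that distinct classes in $(\ZZ/2)^g$ give disjoint such translates. Hence $A(\RR)$ is a disjoint union of $2^{g-\gamma}$ translates of $A(\RR)_0$, which are precisely its connected components; since $\mathrm{Re}(\dd s^2)$ is translation invariant, each of them has volume $\vol(A(\RR)_0) = (\det T)^{-1/2}$ by \Cref{thm:volume_identity}, and summing yields the claim. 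The one step needing genuine care is the component count — checking that the components biject with $\ker\overline M$, neither collapsing nor multiplying — which is exactly where positive definiteness of $T$ is used (via $\Lambda \cap \RR^g = \ZZ^g$ and injectivity of $q \mapsto Tq$ on the relevant $\ZZ^g$-cosets). As a consistency check, specialising to the Jacobian of a smooth real curve with a Natanzon model of type $(\tilde g, r, m)$ and $r > 0$, formulas \eqref{eq:reflection_1} and \eqref{eq:reflection_2} give $\gamma = 2\tilde g + m$, so $g - \gamma = r - 1$, recovering the $2^{r-1}$ components in \Cref{cor:comessatti} and the value $\vol(J(\RR)) = 2^{r-1}(\det T)^{-1/2}$ of \Cref{thm:D}.
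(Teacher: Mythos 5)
Your proposal is correct and follows essentially the same route as the paper: both reduce to counting components via translation invariance of the metric together with \Cref{thm:volume_identity}, and both count by writing the fixed-point condition $2iy\in\Lambda$ in the Comessatti coordinates to identify the components with the classes $\bar q\in(\ZZ/2)^g$ satisfying $Mq\in 2\ZZ^g$, a set of cardinality $2^{g-\gamma}$. Your version merely spells out more explicitly the identification of the real structure with standard conjugation and the disjointness of the translates (via invertibility of $T$), which the paper leaves implicit.
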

\begin{proof}
 The connected components of $A(\RR)$ are translations of the identity component $A(\RR)_0$. Since the canonical Riemannian metric is translation invariant and by \Cref{thm:volume_identity}, it suffices to prove that $A(\RR)$ has $2^{g-\gamma}$ connected components. A point in $A(\CC)$ represented by $x+iy \in\C^g$, $x,y\in\RR^g$, is fixed by the involution if and only if $2iy=k+(\frac{1}{2}M+iT)l$ for some $k,l\in\Z^g$. Thus the set of connected components of $A(\RR)$ is in bijection with classes $\bar{l}\in(\ZZ/2)^g$ such that $Ml\in2\Z^g$.
\end{proof}

Notice the difference between the complex and the real picture: with respect to the canonical Riemannian metric, $\vol(A(\RR))$ is given in \Cref{cor:volume}, while for the complex points we simply have $\vol(A_\CC(\CC)) = 1$.

\subsection{Metric properties of real Jacobians}
\label{sec:Bergman}

We start by adapting the results of \Cref{sec:volume} to the Jacobian $J =\jac(X)$ of a smooth real curve $X$. The Jacobian has a canonical principal polarization given by intersection of cycles. The positive definite Hermitian form $H^{-1}$ on $H^0(X, \Omega)^*$ induced by the principal polarization is the dual to the Hodge product on holomorphic $1$-forms, defined as \[\inner{\omega_1}{\omega_2} \coloneqq \frac{i}{2}\int_X \omega_1 \wedge \overline{\omega}_2 = \frac{1}{2}\int_X \omega_1 \wedge \star\overline{\omega}_2\] for $\omega_1, \omega_2 \in H^0(X, \Omega)$. We refer the reader e.g.~to \cite{griffithsPrinciplesAlgebraicGeometry1978}*{p.~232}. In particular, if $\Pi$ is a period matrix for $X$ in as in \eqref{eq:period_matrix}, then $T_{ij} = \inner{\omega_i}{\omega_j}$.
Finally, recall that the Abel--Jacobi map
$
    \phi \colon X \to \jac(X)
$
is an embedding.
\begin{definition}
    The pull-back metric on $X$ via the Abel--Jacobi map $\phi$ of the canonical metric on $\jac(X)$ is called the  \emph{Bergman} or \emph{canonical} (Hermitian) metric on $X$. We call its real part the \emph{Bergman Riemannian metric} on $X$.    
\end{definition}
 Notice that, since the canonical metric on $\jac(X)$ is translation invariant, the Bergman metric does not depend on the choice of the base point $P_0$ for the Abel--Jacobi map. For more properties of this metric, we refer the reader to \cites{wentworthAsymptoticsArakelovGeensFunction1991,habermannRiemannianMetricsTeichmuller1996a}.

Explicitly, the Bergman metric can be described as follows (compare with the proof of \Cref{thm:volume_identity}). Fixing a symplectic basis of cycles and a normalized basis of holomorphic $1$-forms $\omega_1, \dots, \omega_g$, recall that the Gram matrix $T =(\inner{\omega_i}{\omega_j})_{i,j}$ in the basis $\omega_1, \dots, \omega_g$ for the Hodge inner product is the imaginary part of the Riemann matrix. Therefore, if we write $T = B^t B$, an orthonormal basis $\theta_1, \dots, \theta_g$ for $H^0(X, \Omega)$ with respect to the Hodge product can be defined by $\theta_j = B^{-1}\omega_j$. The Bergman Riemannian metric is therefore $\sum_{j=1}^g \theta_j \overline{\theta}_j$. Using this metric, the volume of $\jac(X)(\C)$ is equal to $1$, while the volume of $X$ is $g$. In the literature, a version with the normalization constant $1/g$ is also used.

We now assume that $X(\RR)\neq \emptyset$. Then we can choose $P_0 \in X(\RR)$, and the Abel--Jacobi map $\phi$ is defined over $\RR$. In particular, in this case we have $\phi(X(\RR)) \subset \jac(X)(\RR)$.
In the following, we will make a quantitative comparison between $\phi(X(\RR))$ and $\jac(X)(\RR)$, with the ultimate goal of studying the totally real divisor threshold. In this direction, we can finally adapt the volume computation of \Cref{sec:volume} to real Jacobians.

\begin{proof}[{Proof of \Cref{thm:D}}]
By \Cref{cor:comessatti}, the number of connected components of $\jac(X)(\RR)$ is $2^{r-1}$. Then the claim follows from \Cref{cor:volume}.
\end{proof}

\section{Qualitative analysis: real algebra}
\label{sec:qualitative}
This section is dedicated to the proofs of \Cref{thm:B,thm:C}. 
For the entire section we fix a real closed field $R_1$ and denote by $R_2$ the field of algebraic Puiseux series over $R_1$.
We will study families of curves over $R_1$ to obtain results about curves over $R_2$, and vice versa.

\subsection{Families of curves}
\label{sec:families}
For $g\geq0$, $d\geq 2g+1$ and $n=d-g$ let $\cH$ be the Hilbert scheme of curves in $\PP^n$ of genus $g$ and degree $d$, and let $Z \subset \PP^n \times \cH$ be the universal family. 
We denote by $\cH'$ the set of curves which are not contained in a hyperplane.
Let $\cH_{(g,r,a)} \subset \cH'(R_1)$ denote the subset of $h \in \cH'(R_1)$ such that $Z_h = \pi_1(\pi_2^{-1}(h)) \subset \PP^n_{R_1}$ is a smooth curve of topological type $(g, r, a)$. It follows from Hardt's triviality theorem \cite{basuAlgorithmsRealAlgebraic2006}*{\S5.8} that this is a semialgebraic set.
By the Riemann--Roch theorem, each such curve is embedded via a complete linear system since the $Z_h$ are assumed not to be contained in a hyperplane.
We are interested in studying the subset \[\cH_{(g,r,a)}(m) = \{ \, h \in \cH_{(g,r,a)} \mid \N(Z_h) \le m  \} .\]
\begin{lemma}\label{lem:bound_nx_semialgebraic}
    The set $\cH_{(g,r,a)}(m)$ is semialgebraic for all $m\in\N$. 
\end{lemma}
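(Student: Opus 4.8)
The plan is to show that the defining conditions of $\cH_{(g,r,a)}(m)$ are all expressible by first-order formulas over the real closed field $R_1$, and then invoke the Tarski--Seidenberg principle (quantifier elimination) to conclude semialgebraicity. Since $\cH_{(g,r,a)}$ is already known to be semialgebraic by Hardt triviality, it suffices to cut out the further condition $\N(Z_h)\le m$ by a first-order formula in the coordinates of $h$.

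First I would use \Cref{lem:0} to replace the condition $\N(Z_h)\le m$ by the equivalent statement $m\,\phi(Z_h(R_1)) = \jac(Z_h)(R_1)$, or — better for first-order purposes — by the divisor-theoretic formulation $N_1\le m$: every divisor of degree $m$ on $Z_h$ is linearly equivalent to an effective totally real one. The key point is that, because $d\ge 2g+1$, each $Z_h\subset\PP^n$ is embedded by a complete linear system, so divisor classes of a fixed degree can be parametrized concretely. A degree-$m$ divisor class on $Z_h$ is represented by an effective divisor of degree $m+k$ minus $k$ copies of a hyperplane section for suitable fixed $k$ (e.g.\ $k$ large enough that $m+k\ge 2g+1$), and effective divisors of bounded degree on $Z_h$ are cut out by the Chow variety / symmetric power, which is itself a variety defined uniformly in $h$. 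Linear equivalence of two effective divisors $E_1,E_2$ of the same degree on $Z_h$ is the condition that $E_1-E_2$ is the divisor of a rational function, equivalently that $h^0(Z_h,\cO(E_1)) $ meets the right linear subspace — all of this is expressible by the vanishing/nonvanishing of determinants of matrices whose entries are polynomials in the coefficients of $h$ and the coordinates of the points involved. The ``totally real'' and ``effective'' conditions are semialgebraic: an effective divisor is totally real exactly when all its points lie in $Z_h(R_1)\subset\PP^n(R_1)$, a closed semialgebraic condition.

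Concretely, I would write: ``for all effective divisors $D$ of degree $m+k$ on $Z_h$ (a bounded family, hence a semialgebraic parameter space fibered over $\cH$), there exists an effective totally real divisor $D'$ of degree $m+k$ on $Z_h$ such that $D\sim D'$.'' Each quantifier ranges over a semialgebraic set defined uniformly in $h$, the matrix-rank conditions encoding $D\sim D'$ are semialgebraic, and the condition ``$D'$ is totally real'' is semialgebraic; so the whole statement defines a semialgebraic subset of $\cH$ by Tarski--Seidenberg. Intersecting with the already-semialgebraic set $\cH_{(g,r,a)}$ gives $\cH_{(g,r,a)}(m)$.

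The main obstacle is the bookkeeping needed to make ``linear equivalence of divisors on the fiber $Z_h$'' genuinely \emph{uniform} in $h$ and genuinely \emph{first-order}: one must pin down a single integer $k$ (working for all $h\in\cH_{(g,r,a)}$, which is fine since $g,d$ are fixed) and exhibit an explicit matrix whose rank detects linear equivalence, with entries polynomial in all the data. This is where one uses that $d\ge 2g+1$ so that $\cO(E)$ is very ample / globally generated and $h^1$ vanishes, making the Riemann--Roch numerics constant across the family; Hardt triviality again guarantees that the auxiliary incidence varieties (symmetric powers, Chow forms) form semialgebraic families over $\cH_{(g,r,a)}$. Once that uniformity is secured, the conclusion is a routine application of quantifier elimination.
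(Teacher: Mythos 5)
Your overall strategy --- express ``$\N(Z_h)\le m$'' as a first-order condition uniform in $h$ and invoke Tarski--Seidenberg --- is exactly the paper's, and your idea of representing an arbitrary degree-$m$ class as an effective divisor twisted by a multiple of the hyperplane class is the right reduction. However, the concrete first-order sentence you write down does not define $\cH_{(g,r,a)}(m)$. You quantify: ``for all effective divisors $D$ of degree $m+k$ there exists an effective totally real $D'$ of the same degree with $D\sim D'$.'' Since $m+k\ge g$, every class of that degree has an effective representative, so this sentence is precisely the statement $\N(Z_h)\le m+k$ (i.e.\ $N_1\le m+k$ in the notation of \Cref{lem:0}), not $\N(Z_h)\le m$; the two differ whenever $m<\N(Z_h)\le m+k$, in which case your condition holds although $h\notin\cH_{(g,r,a)}(m)$. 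What gets lost between your setup paragraph and your concrete formula is the twist by $-k[H]$: the condition to be tested is that the degree-$m$ class $[D]-k[H]$, not the class $[D]$ itself, has a totally real effective representative. (There is also a bookkeeping slip: subtracting $k$ hyperplane sections changes the degree by $kd$, not by $k$.)

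The repair is the paper's formulation: choose $k$ with $kd-m\ge g$, so that every degree-$m$ class can be written as $k[H]-[E]$ with $E$ effective of degree $kd-m$; then $\N(Z_h)\le m$ holds if and only if for every such $E$ there is a homogeneous form $F$ of degree $k$ whose divisor on $Z_h$ equals $E+F'$ with $F'$ effective and totally real. This is manifestly first-order (quantify over the points of $E$, the coefficients of $F$, and the points of $F'$), and it also dispenses with the determinantal encoding of linear equivalence that you leave as ``bookkeeping'': the single hypersurface $F$ already witnesses the equivalence $kH\sim E+F'$, using that $Z_h$ is embedded by a complete linear system of degree $d\ge 2g+1$.
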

\begin{proof}
    Let $k \in \NN$ be such that $k d-m \ge g$. Then, for all divisors $D_1$ and $D_2$ of respective degrees $d$ and $m$, the divisor $kD_1 - D_2$ is linearly equivalent to an effective divisor.
    Now notice that $\N(Z_h) \le m$ if and only if for all effective divisors $E$ of degree $kd-m$ on $Z_h$, there exists a homogeneous polynomial $F$ of degree $k$ whose divisor on $Z_h$ is of the form $F+F'$ where $F'$ is totally real and effective. This is a semialgebraic condition and hence $\cH_{(g,r,a)}(m)$ is semialgebraic. 
\end{proof}

Recall that $R_2$ denotes the field of algebraic Puiseux series over $R_1$ and let $h\colon (0,1]\to \cH_{(g,r,a)}$ be a  semialgebraic path. Its germ corresponds to a point $h_+\in \cH(R_2)$ for which $X_{R_2}\coloneqq Z_{h_+}$ is a smooth curve over $R_2$. Furthermore, for $\epsilon\in(0,1]$, let $X_\epsilon\coloneqq Z_{h(\epsilon)}$.

\begin{lemma}\label{lem:unboundediffinf}
 With $X_\epsilon$ and $X_{R_2}$ as above, we have $\N(X_{R_2})=\infty$ if and only if $\N(X_\epsilon)$ is unbounded for $\epsilon \to 0$. 
\end{lemma}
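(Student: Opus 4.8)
The plan is to express the condition $\N(\,\cdot\,)\le m$ as a semialgebraic property that is insensitive to the base real closed field, and then to transfer it between the family $(X_\epsilon)_{\epsilon\in(0,1]}$ over $R_1$ and its germ $X_{R_2}$ over the Puiseux field by the standard Tarski--Seidenberg principle for germs of semialgebraic paths.

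First I would observe that the proof of \Cref{lem:bound_nx_semialgebraic} is uniform in the real closed field. Indeed, the integer $k$ used there depends only on $d$, $m$ and $g$, and the inputs to the argument --- the Riemann--Roch theorem, the fact that for $d\ge 2g+1$ a smooth curve $Z_h$ with $h\in\cH'$ is embedded by the complete linear system $|\cO(1)|$, and that this embedding is projectively normal (so that every effective divisor linearly equivalent to $k$ times a hyperplane section is cut out by a form of degree $k$) --- all hold over an arbitrary field. Hence that proof produces a single first-order formula $\varphi_m$ in the coordinates of an affine chart of $\cH'$ such that, for every real closed field $K\supseteq R_1$ and every $h\in\cH'(K)$ with $Z_h$ smooth, $\varphi_m(h)$ holds if and only if $\N(Z_h)\le m$. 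In particular $\cH_{(g,r,a)}(m)$ and its extension over $R_2$ are defined by one and the same formula, which computes $\N\le m$ over both $R_1$ and $R_2$.

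Next I would invoke the germ--point correspondence for the field of algebraic Puiseux series: for any semialgebraic subset $S$ of the relevant affine chart (defined over $R_1$), with extension $S(R_2)$ given by the same formula, the germ point $h_+\in\cH(R_2)$ of the semialgebraic path $h$ lies in $S(R_2)$ if and only if $h(\epsilon)\in S$ for all sufficiently small $\epsilon>0$ (see e.g.~\cites{basuAlgorithmsRealAlgebraic2006,bochnakRealAlgebraicGeometry1998a}); this biconditional holds precisely because $h^{-1}(S)\subseteq(0,1]$ is semialgebraic, so that near $0$ it either contains an interval $(0,\epsilon_0)$ or is disjoint from one. Taking $S=\cH_{(g,r,a)}(m)$ and combining with the previous step, we obtain, for every $m\in\NN$,
\[
    \N(X_{R_2})\le m \quad\Longleftrightarrow\quad \N(X_\epsilon)\le m \text{ for all sufficiently small } \epsilon>0.
\]
Passing to contrapositives, $\N(X_{R_2})>m$ holds if and only if for every $\epsilon_0>0$ there is some $\epsilon\in(0,\epsilon_0)$ with $\N(X_\epsilon)>m$. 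Since $\N(X_{R_2})=\infty$ means $\N(X_{R_2})>m$ for all $m\in\NN$, quantifying over $m$ yields that $\N(X_{R_2})=\infty$ if and only if $\N(X_\epsilon)$ is unbounded as $\epsilon\to0$. (In the degenerate case $r=0$ one has $\N(X_\epsilon)=\N(X_{R_2})=\infty$ and the equivalence is trivial.)

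The point requiring genuine care is the uniformity asserted in the first step: one has to make sure that the semialgebraic description of $\N\le m$ extracted from \Cref{lem:bound_nx_semialgebraic} does not secretly depend on the choice of real closed field, so that the Tarski--Seidenberg transfer applies verbatim over the extension $R_2$. Granting this, the rest is a purely formal manipulation of quantifiers, resting on the Puiseux germ correspondence and on the semialgebraicity already established in \Cref{lem:bound_nx_semialgebraic}.
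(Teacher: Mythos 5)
Your proposal is correct and follows essentially the same route as the paper: semialgebraicity of the condition $\N(\cdot)\le m$ from \Cref{lem:bound_nx_semialgebraic}, combined with the germ--extension correspondence for algebraic Puiseux series (\cite{basuAlgorithmsRealAlgebraic2006}*{Prop.~3.17}), followed by quantification over $m$. The uniformity point you flag (that the defining formula computes $\N\le m$ over any real closed extension) is exactly the implicit use of Tarski's principle that the paper relies on here and makes explicit elsewhere.
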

\begin{proof}
    Let $m\in\NN$. By \Cref{lem:bound_nx_semialgebraic} and \cite{basuAlgorithmsRealAlgebraic2006}*{Prop.~3.17} there exists $\epsilon_0>0$ such that $\N(X_\epsilon)\leq m$ for all $\epsilon\in(0,\epsilon_0)$ if and only if $\N(X_{R_2})\leq m$. This implies the claim.
\end{proof}

\subsection{Curves with not so many components}
\label{sec:few_ovals}
In this section we prove the key result of the paper, namely that if the number of connected components of a curve is less than $g$, then the totally real divisor threshold cannot be bounded from above just in terms of the invariants $g$, $r$ and $a$ of the curve.

Let $A=R_1\llbracket t\rrbracket^{\mathrm{alg}}$ be the ring of algebraic formal power series over $R_1$. Then the field of algebraic Puiseux series $R_2$ is the real closure of the field of fractions of $A$. Let $X$ be a flat family over $A$ whose base change $X_{R_2}$ is a smooth and geometrically irreducible curve of genus $g\geq2$ over $R_2$ and whose special fiber $X_{R_1}$ is a stable curve of genus $g$. Our first goal is to prove the following.
\begin{theorem}
    \label{thm:criterioninf}
    With the above notations, let $\widetilde{X}_{R_1}\to X_{R_1}$ be the normalization of the special fiber $X_{R_1}$ of $X$.
\begin{enumerate}
    \item If $\widetilde{X}_{R_1}({R_1})=\emptyset$ and the divisor class group of $\widetilde{X}_{R_1}$ is not finitely generated, then $\N(X_{R_2})=\infty$.
    \item If $X_{R_1}$ is smooth, $m\geq 2g+1$ and $\N(X_{R_2})\leq m$, then $\N(X_{R_1})\leq m$.
\end{enumerate}
\end{theorem}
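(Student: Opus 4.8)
The plan is to handle both parts by specialization. Write $K=\operatorname{Frac}(A)$ and let $\mathcal O\subset R_2$ be the valuation ring of the $t$-adic valuation; it is a henselian local ring dominating $A$, with residue field $R_1$, so by properness of $X\to\operatorname{Spec}A$ every $Q\in X_{R_2}(R_2)$ extends uniquely to $\widetilde Q\in X(\mathcal O)$ and reduces to a point $\overline Q\in X_{R_1}(R_1)$. For part (ii), since $X_{R_1}$ is a smooth connected stable curve it is geometrically integral, hence $X\to\operatorname{Spec}A$ is smooth and proper and the relative Jacobian $\mathcal J=\Pic^0_{X/A}$ is an abelian scheme. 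One first observes that $X_{R_1}(R_1)\neq\emptyset$ (it contains the reduction of any point of $X_{R_2}(R_2)$, which is nonempty as $\N(X_{R_2})\le m$), fixes a base point there, lifts it to a section of $X/A$ by smoothness and henselianness, and uses it for compatible Abel--Jacobi maps over $A$, $R_1$ and $R_2$. Given $\eta\in\jac(X_{R_1})(R_1)=\mathcal J_{R_1}(R_1)$, lift it to $\mathcal J(A)$ (again smoothness plus henselianness), base change to $\jac(X_{R_2})(R_2)$, and use $\N(X_{R_2})\le m$ together with \Cref{lem:0} to write it as $\sum_{i=1}^m\phi(Q_i)$ with $Q_i\in X_{R_2}(R_2)$. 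Over $\mathcal O$, the two $\mathcal O$-points of the separated scheme $\mathcal J\times_A\mathcal O$ given by the lift of $\eta$ and by $\sum_i\phi(\widetilde Q_i)$ agree over the dense generic point of $\operatorname{Spec}\mathcal O$, hence agree; evaluating at the closed point yields $\eta=\sum_{i=1}^m\phi_{R_1}(\overline Q_i)$. Thus $m\,\phi_{R_1}(X_{R_1}(R_1))=\jac(X_{R_1})(R_1)$, i.e. $\N(X_{R_1})\le m$ by \Cref{lem:0}. (The hypothesis $m\ge 2g+1$ is not used in this argument.)

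For part (i), argue by contradiction: assume $X_{R_2}(R_2)\neq\emptyset$ (else $\N(X_{R_2})=\infty$ by definition) and $\N(X_{R_2})=m<\infty$, and fix a base point $P_0\in X_{R_2}(R_2)$. Since $\widetilde X_{R_1}\to X_{R_1}$ is an isomorphism over the smooth locus and $\widetilde X_{R_1}(R_1)=\emptyset$, every $R_1$-point of $X_{R_1}$ is one of its finitely many nodes, so every $Q\in X_{R_2}(R_2)$ reduces to a node. Because $X_{R_1}$ is stable, $\jac(X_{R_2})$ has semistable reduction; passing to a regular semistable model $X'$ of $X$ — whose special fiber is $X_{R_1}$ with chains of $\PP^1$'s inserted at the nodes — the special-fibre identity component of the Néron model $\mathcal N$ of $\jac(X_{R_2})$ is a semi-abelian variety $0\to T\to\mathcal N^0_{R_1}\to B\to 0$ with $T$ a torus and $B=\jac(\widetilde X_{R_1})$, the inserted $\PP^1$'s contributing nothing since they have trivial Jacobian. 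The reduction map $\jac(X_{R_2})(R_2)\to\mathcal N_{R_1}(R_1)$ is surjective, and composing with the projection onto $B$ — defined on the finite-index subgroup mapping into $\mathcal N^0_{R_1}$, and canonical up to the finite component group of $\mathcal N$ and the finite $2$-group $H^1(R_1,T)$ — gives a ``reduced abelian part'' with image of finite index in $B(R_1)=\jac(\widetilde X_{R_1})(R_1)$.

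The crux is that this reduced abelian part kills Abel--Jacobi images of real points up to a fixed finite set: if $Q\in X_{R_2}(R_2)$ reduces to a node, then in $X'$ the section $\widetilde Q$ meets the special fiber at a smooth point of an inserted $\PP^1$-chain, so $\mathcal O_{X'_{R_1}}(\overline Q-\overline P_0)$ restricts trivially to every component of $\widetilde X_{R_1}$, whence the reduced abelian part of $\phi(Q)$ lies in the finite set determined by the chosen coset representatives of the component group. Hence, if $\N(X_{R_2})\le m$, then by \Cref{lem:0} every element of $\jac(X_{R_2})(R_2)$ can be written as $\sum_{i=1}^m(\phi(Q_i)-\phi(Q_i'))$, so its reduced abelian part lies in a fixed \emph{finite} subset of $B(R_1)$; but this subset would then contain a finite-index subgroup of $B(R_1)$, which is impossible since $\Cl(\widetilde X_{R_1})$ — and therefore $B(R_1)=\jac(\widetilde X_{R_1})(R_1)$ — is not finitely generated. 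This contradiction gives $\N(X_{R_2})=\infty$.

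The routine ingredients are the valuative-criterion reductions, the smoothness/henselianness lifts, and the separatedness argument of part (ii). The main obstacle is the input for part (i): identifying the special fiber of the Néron model of the semistable Jacobian and, above all, establishing that real points of the generic fiber reduce into the toric directions, so that their Abel--Jacobi images have abelian part trivial up to the finite ambiguity above. This needs care with the component group of $\mathcal N$, with the possibly non-split torus $T$ over $R_1$, and with the passage between sections of the regular model and Cartier divisors; the cleanest route is via Raynaud's theory of the Picard functor of a semistable curve together with the description of $\Pic^0$ of a nodal curve as an extension of the Jacobian of its normalization by a torus.
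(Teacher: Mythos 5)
Your proof of part (ii) is correct and takes a genuinely different route from the paper's: you work entirely on the Jacobian side, lifting a class from $\jac(X_{R_1})(R_1)$ to the abelian scheme $\Pic^0_{X/A}$, writing its generic fiber as $\sum_{i=1}^m\phi(Q_i)$ via \Cref{lem:0}, and specializing by the valuative criterion plus separatedness. The paper instead argues on the divisor side: it embeds $X$ into $\PP^M_A$ by a relative pluricanonical system, represents a class on $X_{R_1}$ by a reduced effective divisor $D$ in general position, lifts $D$ to $X_{R_2}$ through a transversal hyperplane section using Hensel's lemma, applies $\N(X_{R_2})\le m$ to the lifted part, and specializes the resulting hyperplane back. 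Your version is cleaner here and, as you note, does not use $m\ge 2g+1$; the paper needs that hypothesis only to produce the reduced representative $D$.

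For part (i) your key observation is the right one --- a real point $Q$ of $X_{R_2}$ must reduce to a node of $X_{R_1}$, so on a semistable model $\mathcal O(\widetilde Q-\widetilde P_0)$ restricts trivially to every component of $\widetilde X_{R_1}$ --- but the N\'eron-model packaging around it has a genuine problem. The valuation ring $\mathcal O\subset R_2$ is not discrete: points of $X_{R_2}(R_2)$ are only defined over $\operatorname{Frac}\bigl(R_1\llbracket t^{1/n}\rrbracket^{\mathrm{alg}}\bigr)$ for varying $n$, and under these ramified base changes the component group of the N\'eron model grows without bound (a node $xy=t^k$ contributes $\Z/nk\Z$ after $t=s^n$). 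So there is no single finite set of coset representatives controlling all of $\jac(X_{R_2})(R_2)$, and the step ``its reduced abelian part lies in a fixed finite subset of $B(R_1)$'' does not go through as written. The repair is to discard the component group and the toric part altogether: your restriction observation shows directly that every $\phi(Q)$ maps to $0$ under the homomorphism $\Pic(X_{R_2})\to\Cl(\widetilde X_{R_1})/\langle S\rangle$ obtained by extending a line bundle to a regular semistable model and restricting to the strict transforms of the components of $\widetilde X_{R_1}$; this map is well defined because two extensions differ by vertical divisors, whose restrictions to those components are supported on $S$, and it is compatible across the ramified base changes. On the other hand, a class $[D]\notin\langle S\rangle$, which exists by non-finite generation of $\Cl(\widetilde X_{R_1})$, lifts by Hensel's lemma to a class on $X_{R_2}$ mapping to $[D]\neq 0$, giving the contradiction. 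That streamlined version is essentially the paper's argument, phrased there with hyperplane sections of the pluricanonical embedding instead of line bundles on a semistable model, and it treats both parts of the theorem in one uniform computation.
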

\begin{proof}
    It follows from our assumptions and \cite{egaIV3}*{15.7.10, 15.7.8} that $X$ is proper over $A$. Recall from \cite{delignemumford}*{\S1} that there is a canonical invertible sheaf $\omega_{X/A}$ on $X$ such that $\omega_{X/A}\otimes_A R_2$ is the canonical sheaf of $X_{R_2}$ and such that for all $n\geq3$ the sheaf $\omega_{X/A}^{\otimes n}$ is relatively very ample with $H^0(X,\omega_{X/A}^{\otimes n})\cong A^M$, where $M=(2n-1)(g-1)$. Thus we obtain an embedding $X\to\PP_A^M$ for which the induced embedding $X_{R_2}\to\PP^M_{R_2}$ is the $n$-th pluricanonical map. We also obtain a morphism $f\colon\widetilde{X}_{R_1}\to\PP^M_{R_1}$ whose image is $X_{R_1}$. In case $(ii)$, the morphism $f$ is also an embedding since $\widetilde{X}_{R_1}={X}_{R_1}$. 

    Now let $m\geq 2g+1$ and assume that $\N(X_{R_2})\leq m$. In case $(i)$ we will produce a contradiction and in case $(ii)$ we will show that $\N(X_{R_1})\leq m$.    
    Let $S\subset\widetilde{X}_{R_1}$ be the preimage of the singular locus of $X_{R_1}$ under the normalization map.     
    Let $D$ be a reduced effective divisor on $\widetilde{X}_{R_1}$ of degree $d\geq m$. In case $(ii)$, we note that every divisor class of degree $d\geq m$ has such a representative because $m\geq 2g+1$. In case $(i)$, we additionally assume that the support of $D$ is disjoint from $S$ and that its class $[D]$ is not contained in the subgroup of the divisor class group generated by classes of points from $S$ . Such a divisor exists when $d$ is large enough by our assumption that the divisor class group of $\widetilde{X}_{R_1}$ is not finitely generated.
    
    Choosing $n$ large enough, there is a hyperplane $H\subset\PP^M_{R_1}$  that contains the image of the support of $D$ under $f\colon\widetilde{X}_{R_1}\to\PP^M_{R_1}$ and intersects $X_{R_1}$ transversally. By Hensel's lemma, for every point in the support of $D$ there is a point in $H\cap X_{R_2}$ that specializes to it. Hence the divisor on $X_{R_2}$ defined by $H$ can be written as the sum $E=E_1+E_2$ of two effective divisors such that $E_1$ specializes to $f(D)$ and $E_2$ to the sum of the remaining intersection points of $X_{R_1}$ with $H$. We denote the divisor on $\widetilde{X}_{R_1}$ consisting of these remaining intersection points by $D_2$, i.e., the divisor defined by $H$ on $\widetilde{X}_{R_1}$ is $D+D_2$. Since $\deg(E_1)\geq \N(X_{R_2})$, $E_1$ is linearly equivalent to a totally real divisor $E_1'$ on $X_{R_2}$. There is a hyperplane $H'$ of $\PP^M_{R_2}$ whose divisor on $X_{R_2}$ is $E_1'+E_2$. Every point of $E_1'$ specializes to a real point of $X_{R_1}$. Thus, in case $(i)$, the hyperplane $H'$ specializes to a hyperplane whose divisor on $\widetilde{X}_{R_1}$ is of the form $D_1'+D_2$, where $D_1'$ is an effective divisor whose support is contained in $S$ ($X_{R_1}$ has only singular real points by the assumption  $\widetilde{X}_{R_1}(R_1)=\emptyset$). Because $D$ is linearly equivalent to $D_1'$, this is a contradiction to our assumtion that $[D]$ is not contained in the subgroup generated by classes of points of $S$. In case $(ii)$, the hyperplane $H'$ specializes to a hyperplane whose divisor on ${X}_{R_1}$ is of the form $D_1'+D_2$, where $D_1'$ is an effective and totally real divisor. Because $D$ is linearly equivalent to $D_1'$, this shows that $\N(X_{R_1})\leq d$. Since in case $(ii)$ we can choose $d=m$, this shows $\N(X_{R_1})\leq m$.
\end{proof}
We now give examples of families of curves over ${R_1}$, explicit for almost all topological types, where \Cref{thm:criterioninf} applies. We start with hyperelliptic families, for which we refer the reader to \cite{grossRealAlgebraicCurves1981a}*{Sec.~6}. 

\begin{example}\label{ex:nondiv}
    Let $g\geq2$ and $0\leq r< g$. We construct a curve of topological type $(g,r,1)$.
    Let $a_1<\cdots<a_r$ be elements of ${R_1}$ and consider pairwise distinct $b_1,\ldots,b_{g+1-r},c_1,\ldots,c_{g+1-r}\in {R_1}(i)$  such that $b_i = \overline{c}_i$ is the conjugate of $c_i$ for all $i=1,\ldots,g+1-r$. The hyperelliptic curve 
    \begin{equation*}
        y^2=-\prod_{i=1}^r(x-a_i+t)\cdot\prod_{i=1}^r(x-a_i-t)\cdot\prod_{i=1}^{g+1-r}(x-b_i)\cdot\prod_{i=1}^{g+1-r}(x-c_i)
    \end{equation*}
    of genus $g$ over $A$ satisfies the assumptions of \Cref{thm:criterioninf}. Indeed, the special fiber $X_{R_1}$ has $r$ nodes. All of them are isolated real points and its normalization $\widetilde{X}_{R_1}$ is an irreducible curve of genus $g-r$ without ${R_1}$-points. It follows from \Cref{thm:criterioninf} that $\N(X_{R_2})=\infty$. The set of $R_2$-points of $X_{R_2}$ has $r$ connected components, and it is not of dividing type.
\end{example}
\begin{example}\label{ex:divodd}
    Let $g\geq2$ and $1\leq r< g$. We construct a curve of topological type $(2g-1,2r,0)$.
    To this end, consider the unramified double cover $Y_{R_2}$ of the curve $X_{R_2}$ from \Cref{ex:nondiv} obtained by adjoining a square root $z$ of
    \begin{equation}\label{eq:sqr}
     -\prod_{i=1}^r\frac{x-a_i+t}{x-a_i-t}
    \end{equation}
    to the function field of $X_{R_2}$. By the Riemann--Hurwitz formula, the genus of $Y_{R_2}$ is $2g-1$ where $g$ is the genus of $X_{R_2}$. 
    Because the expression in \Cref{eq:sqr} has an even number of double roots on each of the $r$ connected components of $X_{R_2}(R_2)$, the number of connected components of $Y_{R_2}(R_2)$ is $2r$. 
    Moreover, the map $Y_{R_2}\to\mathbb{P}^1_{R_2}$ defined by $z$ is real-fibered in the sense that if an $R_2(i)$-point of $Y_{R_2}$ is mapped to an $R_2$-point, then it must be an $R_2$-point itself. This implies that $Y_{R_2}$ is of dividing type. Finally, we want to apply \Cref{thm:criterioninf} to show that $\N(Y_{R_2})=\infty$. To this end, we first note that by \cite{fultonNotesStableMaps1997}*{Prop.~6} (``properness of the Deligne--Mumford stack of stable maps'') there is a flat family $Y$ over ${R_1}\llbracket t^{1/n}\rrbracket$, for some $n\in\N$, whose base change to $R_2$ is $Y_{R_2}$ and whose special fiber $Y_{R_1}$ is a stable curve that maps onto the special fiber $X_{R_1}$ from \Cref{ex:nondiv}. This induces a map $\widetilde{Y}_{R_1}\to\widetilde{X}_{R_1}$ of normalizations. In particular, the set of ${R_1}$-points of $\widetilde{Y}_{R_1}$ is empty and the divisor class group of $\widetilde{Y}_{R_1}$ is not finitely generated because both are true for $\widetilde{X}_{R_1}$. Thus by \Cref{thm:criterioninf} we have $\N(Y_{R_2})=\infty$.
\end{example}
\begin{example}\label{ex:diveven}
    Let $g\geq2$ and $1\leq r< g$. We construct a curve of topological type $(2g,2r-1,0)$. To this end,
    we consider a small modification of \Cref{ex:divodd}, namely the branched double cover $Y'_{R_2}$ of the curve $X_{R_2}$ from \Cref{ex:nondiv} obtained by adjoining a square root $z$ of
    \begin{equation}\label{eq:sqr2}
     -\frac{x-a_1+\frac{t}{2}}{x-a_1-t}\cdot\prod_{i=2}^r\frac{x-a_i+t}{x-a_i-t}
    \end{equation}
    to the function field of $X_{R_2}$. This time, the genus of $Y'_{R_2}$ is $2g$ and the number of connected components of $Y'_{R_2}(R_2)$ is $2r-1$. 
    Again, the map $Y'_{R_2}\to\mathbb{P}^1_{R_2}$ defined by $z$ is real-fibered which implies that $Y'_{R_2}$ is of dividing type. The same argument as in \Cref{ex:divodd} shows that $\N(Y'_{R_2})=\infty$ if $r<g$.
\end{example}

The preceding three examples cover all the topological types except for dividing $(M-2)$-curves in the even genus case, i.e., the cases with topological types $(2k, 2k-2,0)$.
These remaining cases are covered by the following example, which is however not at the same level of concreteness as the previous ones.
\begin{example}\label{ex:div}
 Consider a stable curve over ${R_1}$ whose base change to ${R_1}(i)$ consists of two smooth curves of genus $k\geq1$ that intersect transversally in $r>0$ points and that are exchanged by the non-trivial ${R_1}$-automorphism of ${R_1}(i)$. 
 The genus of such a curve is $g=2k+r-1$. 
 We will argue that there exists a flat family over $A$ having this curve as special fiber and whose base change to $R_2$ is a smooth curve $X_{R_2}$ of diving type whose set of $R_2$-points has $r$ components. Then $\N(X_{R_2})=\infty$ by \Cref{thm:criterioninf}.
 For $k=1$, this covers the case of dividing $(M-2)$-curves.
 It appears to be well-known among experts that it is possible to find such a family but for lack of a reference we provide a brief argument.

 First, assume there is some flat family $X$ over $A$ whose base change $X_{R_2}$ is a smooth projective and geometrically irreducible curve and whose special fiber $X_{R_1}$ is a stable curve as above. Let $S$ be the set of singular points of $X_{R_1}$. 
As in \cite{hartshorneDeformationTheory2010}*{Excercise~5.7} the induced family over the dual numbers gives a global section $\alpha$ of the Lichtenbaum--Schlessinger sheaf $\mathcal{T}^1_{X_{R_1}}=\mathcal{T}^1\left(X_{R_1}/{R_1},\mathcal{O}_{X_{R_1}}\right)$. The space $H^0(X,\mathcal{T}^1_{X_{R_1}})$ can be identified with $({R_1})^S$ in a way that for every point $s\in S$, where $\alpha\in ({R_1})^S$ takes a positive value, there is an $R_2$-point of $X_{R_2}$ that specializes to $s$. We have to show that a family $X$, for which $\alpha$ takes only positive values, exists. As there is no $H^2$ on a curve, there exists such a family over the dual numbers by the exact sequence from \cite{hartshorneDeformationTheory2010}*{Ex.~5.7}. Finally, such a family over the dual numbers can be lifted to one over $A$ because an embedding of $X_{R_1}$ to some $\PP^n_{R_1}$ via a complete linear system of large enough degree is a smooth point of the Hilbert scheme. Indeed, as shown in the proof of \cite{hartshorneDeformationTheory2010}*{Prop.~29.9}, for such an embedding one has $H^1(Y,\mathcal{N}_{Y/\PP^n})=0$ which implies smoothness of the Hilbert scheme at this point by \cite{hartshorneDeformationTheory2010}*{Theorem 1.1.c}.
\end{example}

Now we are ready to prove the first two of our main results. We start with a refined version of \Cref{thm:A}.

\begin{theorem}
    \label{thm:A_complete}
    Let $R$ be a real closed field.
    \begin{enumerate}
        \item If $X$ is a smooth curve over $R$ of genus $g$ with $\N(X)=\infty$ such that $X(R)$ has $r>0$ (semialgebraically) connected components, then $r<g$ and $R$ is non-Archimedean.
        \item Conversely, if $R$ is non-Archimedean and $(g,r,a)$ is an admissible topological type with $0\leq r<g$, then there exists a smooth curve $X$ over $R$ with topological type $(g,r,a)$ such that $\N(X)=\infty$.
    \end{enumerate}
    \end{theorem}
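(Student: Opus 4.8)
The strategy is to assemble the two directions from the machinery already developed in Sections \ref{sec:families} and \ref{sec:few_ovals}, together with the upper bound for curves with many components (Theorem \ref{thm:huismanmonnier}, giving $\N(X)\le 2g-1$ when $r=g$ or $r=g+1$).

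For part (i), suppose $X$ is smooth over $R$ with $\N(X)=\infty$ and $r>0$ real components. First I would observe that if $R$ were Archimedean, then $R$ embeds in a way compatible with real closure into $\RR$ (more precisely $R$ is isomorphic to a subfield of $\RR$), and $X$ would be defined over a finitely generated, hence countable, subfield; by the Tarski--Seidenberg transfer principle the statement "$\N(X)\le m$" is first-order over the base field, so finiteness of $\N$ descends from/ascends to $\RR$. Since Scheiderer's theorem gives $\N(X_\RR)<\infty$ for the corresponding real curve with nonempty real locus, transfer forces $\N(X)<\infty$, a contradiction. Hence $R$ is non-Archimedean. For the bound $r<g$: I would use that the cases $r=g$ and $r=g+1$ are handled by Theorem \ref{thm:huismanmonnier}, which gives the \emph{explicit} bound $\N(X)\le 2g-1$ over an arbitrary real closed field (the proof there should be purely algebraic and hence field-independent); and by admissibility $r\le g+1$ always, so $\N(X)=\infty$ excludes $r\in\{g,g+1\}$, leaving $r<g$.

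For part (ii), given a non-Archimedean $R$ and an admissible type $(g,r,a)$ with $0\le r<g$, the plan is to produce the curve over $R$ via the Puiseux-series constructions of Section \ref{sec:few_ovals}. Any non-Archimedean real closed field $R$ contains a copy of the algebraic Puiseux series $R_2 = R_1\llbracket t\rrbracket^{\mathrm{alg}\,,\mathrm{rc}}$ over its real closed subfield of "bounded" elements $R_1$ — more carefully, one picks a positive infinitesimal $t\in R$, lets $R_1$ be a real closed field over which the infinitesimal can be adjoined, and embeds $R_2$ into $R$ sending the formal parameter to $t$. Base change along this embedding preserves smoothness, genus, topological type, and the value $\N$ (the latter because, by \Cref{lem:bound_nx_semialgebraic} and the Tarski--Seidenberg principle, "$\N(X)\le m$" is a first-order condition over the field of definition, so $\N$ is unchanged under real closed field extension). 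Thus it suffices to realize the type $(g,r,a)$ over $R_2$ with $\N=\infty$; and Examples \ref{ex:nondiv}, \ref{ex:divodd}, \ref{ex:diveven}, \ref{ex:div} do exactly this: \Cref{ex:nondiv} gives $(g,r,1)$ for all $0\le r<g$; \Cref{ex:divodd} gives the dividing types of odd genus; \Cref{ex:diveven} the dividing types $(2g,2r-1,0)$; and \Cref{ex:div} the remaining dividing $(M-2)$-curves $(2k,2k-2,0)$. Checking that these examples collectively exhaust all admissible $(g,r,a)$ with $r<g$ (using the parity constraint $r\equiv g+1\ \mathrm{mod}\ 2$ for dividing types) is the bookkeeping step, but it is routine given the list.

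The main obstacle I anticipate is the descent/transfer argument in part (i): one must be careful that "$\N(X)\le m$" really is uniformly first-order in the coefficients of a defining system for $X$ (so that the Tarski--Seidenberg transfer principle applies between $R$ and $\RR$, or between $R$ and a real closure of its prime field's algebraic closure in $R$), and that finiteness of $\N$ — an existential statement over $m$ — is preserved. The cleanest route is probably to fix the pluricanonical embedding (as in the proof of \Cref{thm:criterioninf}), place $X$ in the relevant Hilbert scheme, invoke \Cref{lem:bound_nx_semialgebraic} to see that $\{\N\le m\}$ is cut out by a semialgebraic condition over the prime field, and then apply the Tarski--Seidenberg principle to transport (non)finiteness between $R$ and $\RR$ — in the Archimedean case directly, since any Archimedean real closed field is elementarily equivalent to $\RR$.
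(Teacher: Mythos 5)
Your proposal follows essentially the same route as the paper's proof: for (i), Hölder's embedding into $\RR$ plus Scheiderer's theorem and Tarski transfer of the semialgebraic condition from \Cref{lem:bound_nx_semialgebraic} to rule out the Archimedean case, and the Huisman--Monnier bound to rule out $r\in\{g,g+1\}$; for (ii), identifying a copy of the algebraic Puiseux series field inside $R$ via a positive infinitesimal, invoking the examples of \Cref{sec:few_ovals}, and transferring $\N=\infty$ up to $R$. The one slip is your parenthetical claim that the proof of \Cref{thm:huismanmonnier} is ``purely algebraic and hence field-independent'': it is not, since it rests on \Cref{thm:generationIdComp}, whose proof uses integral homology, Poincaré duality and topological degree over $\RR$; the paper instead obtains the bound $\N(X)\le 2g-1$ over an arbitrary real closed field by transferring the semialgebraic statement $\cH_{(g,r,a)}=\cH_{(g,r,a)}(2g-1)$ from $\RR$ via Tarski's principle --- exactly the machinery you already set up, so the fix is immediate.
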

\begin{proof}
Let $R$ be a real closed field and $X$ a smooth curve over $R$ of genus $g$. If $R$ is Archimedean, then, by H\"older's theorem \cite{ERA}*{Theorem 2.1.10}, there is an embedding of $R$ to $\RR$. By Scheiderer's result \cite{scheidererSumsSquaresRegular2000}, we have $\N(X_{\RR})\leq m$ for some $m\in\NN$ where $X_{\RR}$ is the base change of $X$ to $\RR$. Since this is a semialgebraic condition by \Cref{lem:bound_nx_semialgebraic}, Tarski's principle shows that $\N(X)\leq m$. If $r\geq g$, then the results of Huisman \cite{huismanGeometryAlgebraicCurves2001} and Monnier \cite{monnierDivisorsRealCurves2003} imply that $\cH_{(g,r,a)}=\cH_{(g,r,a)}(2g-1)$ holds over $\RR$. Thus, by Tarski's principle, one has $\N(X)\leq 2g-1$ for every smooth curve $X$ of genus $g$ over any real closed field $R$ whenever $X(R)$ has at least $r$ components. This shows part (i) of \Cref{thm:A}.

Now let $R$ be a non-Archimedean real closed field.  Let $R_1$ be the relative algebraic closure of $\Q$ in $R$, which is a real closed field. Because $R$ is not Archimedean, there exists $t\in R$ such that $0<t<\frac{1}{n}$ for all $n\in\NN$. Such $t$ is necessarily transcendental over $R_1$. Therefore, the relative algebraic closure of $R_1(t)$ in $R$ can be identified with the field $R_2$ of algebraic Puiseux series over $R_1$.

Consider an admissible topological type $(g,r,a)$ with $0\leq r<g$ and choose a smooth curve $X_{R_2}$ over $R_2$ of this topological type  with $\N(X_{R_2})=\infty$. If $r=0$, any such curve satisfies $\N(X_{R_2})=\infty$ by definition; if $r>0$ and $a=1$, then we can choose $X_{R_2}$ as in \Cref{ex:nondiv}; if $r>0$ and $a=0$, then $g=2k+r-1$ for some $k\geq1$ and we can choose $X_{R_2}$ as in \Cref{ex:div}.
By \Cref{lem:bound_nx_semialgebraic} and Tarski's principle, this shows that $\N(X_{R})=\infty$ where $X_R$ is the base change of $X_{R_2}$ to $R$. This shows part (ii) of \Cref{thm:A}.
\end{proof}

{To prove \Cref{thm:B}, we need to show how the totally real divisor threshold behaves on proper flat families.}
\begin{theorem}[Semicontinuity]
    \label{thm:semicontinuity}
    Let $T$ be a {scheme that is locally of finite type} over $R_1$ and $X\to T$ a proper flat family of smooth curves of genus $g\geq2$ over $T$. For every $m\geq 2g+1$, the set
    \begin{equation*}
        \N(T,m)=\{\, t\in T(R_1)\mid \N(X_t)\leq m \,\}
    \end{equation*}
    is closed in the Euclidean topology on $T(R_1)$.
\end{theorem}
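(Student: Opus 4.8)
The key observation is that the statement is local on $T$, so it suffices to prove it in the case where $T$ is a semialgebraic subset of some affine space over $R_1$ — or even to reduce to the one-dimensional situation, since closedness of a set is detected by semialgebraic paths. Concretely, I would argue as follows. Since $T$ is locally of finite type, $T(R_1)$ carries the Euclidean topology and closedness can be checked on each semialgebraic path: a semialgebraic set $S \subseteq T(R_1)$ is closed if and only if for every semialgebraic path $h\colon (0,1]\to S$ converging to a point $t_0\in T(R_1)$, one has $t_0\in S$. So first I would reduce to showing: if $h\colon(0,1]\to\N(T,m)$ is a semialgebraic path with $\lim_{\epsilon\to 0}h(\epsilon)=t_0$, then $\N(X_{t_0})\le m$.

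**Reduction to Puiseux series and the main argument.** Given such a path, its germ defines an $R_2$-point $h_+$ of $T$, where $R_2$ is the field of algebraic Puiseux series over $R_1$, and we get a smooth curve $X_{R_2}\coloneqq X_{h_+}$ over $R_2$ together with the specialization $X_{t_0}$ over $R_1$. By \Cref{lem:unboundediffinf}-type reasoning (more precisely by \Cref{lem:bound_nx_semialgebraic} and \cite{basuAlgorithmsRealAlgebraic2006}*{Prop.~3.17}), since $\N(X_{h(\epsilon)})\le m$ for all small $\epsilon$, we get $\N(X_{R_2})\le m$. Now I would like to spread this out over $A = R_1\llbracket t\rrbracket^{\mathrm{alg}}$: the path $h$, after a ramified base change $t\mapsto t^{1/n}$ if necessary, extends to a morphism $\operatorname{Spec} A\to T$ whose closed point maps to $t_0$ and whose generic point gives $X_{R_2}$; pulling back the family $X\to T$ yields a proper flat family $\mathcal X\to\operatorname{Spec} A$ of smooth curves, with special fiber $X_{t_0}$ (smooth!) and generic fiber $X_{R_2}$. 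This is exactly the setup of \Cref{thm:criterioninf}(ii): $X_{R_1} = X_{t_0}$ is smooth, $m\ge 2g+1$, and $\N(X_{R_2})\le m$, so we conclude $\N(X_{t_0}) = \N(X_{R_1})\le m$, as desired.

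**The main obstacle.** The subtle point is the spreading-out step: a priori $h$ is only a semialgebraic path, and I must produce an honest flat family over $A$ (the ring of algebraic power series) with the correct special fiber. One must check that the specialization of the family along $h$ at $t_0$ really is the smooth curve $X_{t_0}$ and not some degenerate limit — this is where properness of $X\to T$ and smoothness of every fiber is used, so that no stable-reduction complications arise, and the family over $A$ is obtained simply by base change $\operatorname{Spec} A\to T$ along the algebraic germ of $h$ (which exists because $h$ is semialgebraic, hence its coordinates are algebraic Puiseux series, after a suitable ramification $t^{1/n}$). The only other mild care needed is the reduction from arbitrary locally-of-finite-type $T$ to the semialgebraic/path situation; this is standard since $T(R_1)$ has a semialgebraic structure locally and closedness is a local, path-detectable property in the Euclidean topology. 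With these two points in place, the theorem follows directly from \Cref{thm:criterioninf}(ii).
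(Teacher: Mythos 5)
Your proposal is correct and follows essentially the same route as the paper: reduce closedness to a semialgebraic path converging to $t_0$ (curve selection), transfer the bound $\N\le m$ to the curve over the Puiseux series field $R_2$ via semialgebraicity and \cite{basuAlgorithmsRealAlgebraic2006}*{Prop.~3.17}, spread the family out over $A=R_1\llbracket t\rrbracket^{\mathrm{alg}}$ with smooth special fiber $X_{t_0}$, and conclude by \Cref{thm:criterioninf}(ii). The only details the paper makes explicit that you elide are (a) the relative tricanonical embedding $X\hookrightarrow\PP^n_T$ from \cite{delignemumford}, which is what lets one pull back the sets $\cH_{(g,r,a)}(m)$ from the Hilbert scheme and hence makes $\N(T,m)$ semialgebraic in the first place (needed both for the path criterion and for the transfer to $R_2$), and (b) the spreading-out, which the paper performs by normalizing the Zariski closure of the path's image and embedding the local ring of the resulting smooth curve into $A$, rather than by directly reparametrizing the Puiseux-series coordinates of the path; both variants work.
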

\begin{proof}
    By \cite{delignemumford}*{\S1}, the morphism $X\to T$ factors through an embedding of $X$ to $\PP^n_T$, $n=5g-6$, such that the induced embedding of each $X_t$, $t\in T$, is the third pluricanonical embedding, cf.~the proof of \Cref{thm:criterioninf}. Denote by $f\colon T\to \cH$ the induced map to the Hilbert scheme $\cH$ of curves of genus $g$ and degree $6g-6$ in $\PP^n$. Then $\N(T,m)$ is the preimage under $f$  of  $\cup \cH_{(g,r,a)}(m)$, where the union is over of all topological types of genus $g$ and $\cH_{(g,r,a)}(m)$ is the set defined in \Cref{sec:families}. Thus, by \Cref{lem:bound_nx_semialgebraic}, the set $\N(T,m)$ is semialgebraic. 
   
   Let $t_0$ be in the closure of $\N(T,m)$. We have to show that $t_0\in \N(T,m)$. By the curve selection lemma{, which applies as $T$ is locally of finite type}, there is a semialgebraic path
   \begin{equation*}
       \alpha\colon (0,1)\to \N(T,m)
   \end{equation*}
   with $\lim_{\epsilon\to0}\alpha(\epsilon)=t_0$. Without loss of generality, we may assume that $\alpha$ is not constant. Let $C\subset T$ be the Zariski closure of the image of $\alpha$ and $\pi\colon\tilde{C}\to C$ the normalization. By the existence of semialgebraic sections \cite{ScheidererBook}*{Proposition 4.5.9}, there exists $\epsilon_0>0$ and a semialgebraic path $\beta\colon(0,\epsilon_0)\to\tilde{C}({R_1})$ with $\alpha(\epsilon)=\pi(\beta(\epsilon))$ for all $0<\epsilon<\epsilon_0$. We have $\pi(\tilde{t}_0)=t_0$ for $\tilde{t}_0=\lim_{\epsilon\to0}\beta(\epsilon)\in\tilde{C}({R_1})$. Now we consider the base change $\tilde{X}\to\tilde{C}$ of $X\to T$ which is a proper flat family over the smooth curve $\tilde{C}$. For all $\epsilon\in(0,\epsilon_0)$, the fiber of $\tilde{X}\to\tilde{C}$ over $\beta(\epsilon)$ is $X_{\alpha(\epsilon)}$ which satisfies $\N(X_{\alpha(\epsilon)})\leq m$. Therefore, the ${R_2}$-point of $\tilde{C}$ corresponding to the germ of $\beta$ corresponds to a smooth curve $X_{R_2}$ over ${R_2}$ with $\N(X_{R_2})\leq m$ by \cite{basuAlgorithmsRealAlgebraic2006}*{Proposition 3.17} and because $\N(T,m)$ is semialgebraic. On the other hand, since $\tilde{C}$ is smooth of dimension $1$, its local ring at $\tilde{t}_0\in\tilde{C}({R_1})$ can be naturally embedded into $A={R_1}\llbracket t \rrbracket^{\mathrm{alg}}$. Base change of $\tilde{X}\to\tilde{C}$ to $A$ gives a flat family over $A$ whose base change to ${R_2}$ is $X_{R_2}$ and whose special fiber is $X_{t_0}$. Now part $(ii)$ of \Cref{thm:criterioninf} implies $\N(X_{t_0})\leq m$ and thus $t_0\in \N(T,m)$.
\end{proof}

We can now conclude the section with the proof of \Cref{thm:B}.

\begin{proof}[Proof of \Cref{thm:B}]
    {We start by proving (ii).}
    Let $R_1=\RR$ and $R_2$ be the field of algebraic Puiseux series over $\RR$.
    Consider an admissible topological type $(g,r,a)$ with $0<r<g$ and, as in the proof of \Cref{thm:A_complete}, choose a smooth curve $X_{R_2}$ over $R_2$ of this topological type with $\N(X_{R_2})=\infty$. Next, we embed $X_{R_2}$ to $\PP^n_{R_2}$ via any complete linear system of degree $d\geq 2g+1$ and $n=d-g$. The corresponding $R_2$-point of the Hilbert scheme can be realized as the germ of a   semialgebraic path $h\colon (0,1]\to \cH_{(g,r,a)}$. For $\epsilon\in(0,1]$ we denote the smooth curve over $\RR$ of topological type $(g,r,a)$ that corresponds to the point $h(\epsilon)$ by $X_\epsilon$. By \Cref{lem:unboundediffinf} we have that $\N(X_\epsilon)$ is unbounded for $\epsilon\to0$, {concluding the proof of (ii)}.

    {We now prove (i). Let
    \[
        \N(g, m) = \left\{ \, X \in \scrM_g^{\RR} \mid \N(X) \le m  \, \right\}
    \]
    If $g=1$, it follows from the results of Huisman \cite{huismanGeometryAlgebraicCurves2001} and Monnier \cite{monnierDivisorsRealCurves2003} (see also \Cref{thm:huismanmonnier}) and by $m \ge 2g+1$, that $\N(1, m) = \scrM_{(1,1,1)}^{\RR} \sqcup \scrM_{(1,2,0)}^{\RR}$, which is closed. Assume then that $g \ge 2$.} {It is proven in \cite{degaayfortmanRealModuliSpaces2022a}*{Th.8.2} that the coarse moduli space $\scrM_g^\RR$ is homeomorphic to $\abs{\cM_g(\RR)}$, where $\cM_g$ is the moduli stack over $\RR$ of smooth genus $g$ curves. Here the topology on $\abs{\cM_g(\RR)}$ is the quotient topology of the Euclidean topology on $T(\RR)$ by the map $f_\RR \colon T(\RR) \to \abs{\cM_g(\RR)}$ induced by a smooth presentation $f \colon T \to \cM_g$ by an $\RR$-scheme $T$ for which $f_\RR \colon T(\RR) \to \abs{\cM_g(\RR)}$ is surjective. Such a scheme $T$ exists by \cite{degaayfortmanRealModuliSpaces2022a}*{Th.~7.3} and it is locally of finite type over $\RR$ because $\cM_g$ is of finite type. If we define $\N(T, m)$ as in \Cref{thm:semicontinuity}, then $\N(T, m) = f_\RR^{-1}(\N(g,m))$ and the set $\N(T, m)$ is closed by \Cref{thm:semicontinuity}. Thus $\N(g,m)$ is closed in $\abs{\cM_g(\RR)} \cong \scrM_g^\RR$.}
\end{proof}
\section{Quantitative analysis: topology and geometry}
\label{sec:quantitiative}
While in \Cref{sec:qualitative} we studied the boundedness or unboundedness of the totally real divisor threshold, we now focus on concrete bounds in the case of curves over $\RR$.
\subsection{Curves with many components}
\label{sec:many_ovals}
In this section, we consider curves with many connected components, i.e., smooth real curves $X$ of genus $g$ such that $r(X) = g$ or $r(X) = g+1$. Our goal is to reprove results of Huisman and Monnier, showing that the totally real divisor threshold is bounded from above by $2g-1$ in these cases.

Recall that the identity component $J(\RR)_0$ of the real locus of the Jacobian $J = \jac(X) \cong \Pic^0(X)$ consists of those classes $[D]$ of conjugation-invariant divisors $D$ of degree $0$ whose restriction to each connected component of $X(\RR)$ is of even degree, see e.g.~\cite{grossRealAlgebraicCurves1981a}. We will need the following, more refined characterization of $J(\RR)_0$.

\begin{proposition}\label{thm:generationIdComp}
    Let $X$ be a smooth real curve and $J = \jac(X)$. Suppose that $X(\RR)$ has $g$ or $g+1$ connected components, and let $S_1,\dots,S_g$ be $g$ of these components. Fix points $P_i\in S_i$ for $i=1,\dots,g$, then 
    \[
        J(\RR)_0=\biggl\{\sum_{i=1}^g [Q_i-P_i]\ |\ Q_i\in S_i\text{ for }i=1,\dots,g\biggr\}.
    \]
\end{proposition}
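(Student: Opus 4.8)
The plan is to prove the two inclusions separately. The inclusion ``$\supseteq$'' is essentially the characterization recalled just before the statement: each class $[Q_i - P_i]$ with $Q_i, P_i \in S_i$ has degree $0$, is conjugation-invariant, and its restriction to the connected component $S_i$ has degree $0$ (hence even) while its restriction to every other component is empty (hence even too). So any sum $\sum_{i=1}^g [Q_i - P_i]$ lies in $J(\RR)_0$. The content is the reverse inclusion, and the natural tool is a dimension/surjectivity argument for the real Abel--Jacobi-type map
\[
    \psi \colon S_1 \times \cdots \times S_g \longrightarrow J(\RR)_0, \qquad (Q_1,\dots,Q_g) \longmapsto \sum_{i=1}^g [Q_i - P_i].
\]
Both source and target are compact connected real manifolds of dimension $g$ (the source is a product of $g$ circles; the target is $J(\RR)_0 \cong (\RR/\ZZ)^g$ by \Cref{cor:comessatti}(i)). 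So it suffices to show that $\psi$ is surjective, and for that it is enough to show that the image of $\psi$ is both open and closed, or equivalently that $\psi$ is a local diffeomorphism somewhere and that its image is closed; closedness is automatic by compactness of the source, so the real point is to produce one point where the differential of $\psi$ is an isomorphism.

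For the differential computation I would work on the universal cover / via holomorphic differentials: identifying $J \cong H^0(X,\Omega)^*/H_1(X,\ZZ)$, the differential of $\psi$ at a point $(Q_1,\dots,Q_g)$ is, up to the obvious identifications, the map sending a tangent vector at $Q_i$ to the functional $\omega \mapsto \omega(Q_i)$ paired against that vector — in coordinates, the $g\times g$ matrix whose $i$-th column records the values $\omega_1(Q_i),\dots,\omega_g(Q_i)$ of a basis of holomorphic $1$-forms evaluated at $Q_i$ (with respect to a local parameter). This is a ``generalized Wronskian/Brill--Noether'' matrix, and it is nonsingular precisely when the divisor $Q_1 + \cdots + Q_g$ imposes independent conditions on $H^0(X,\Omega)$, i.e.\ when $h^0(X, \Omega(-Q_1-\cdots-Q_g)) = 0$, equivalently $h^0(X, \cO(Q_1+\cdots+Q_g)) = 1$, i.e.\ $Q_1+\cdots+Q_g$ is a \emph{nonspecial} effective divisor. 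The key geometric input is therefore: there exists a choice of $Q_i \in S_i$ ($i=1,\dots,g$), one on each of the $g$ given components, such that $Q_1 + \cdots + Q_g$ is nonspecial. This is exactly where the hypothesis $r \ge g$ is used — with at least $g$ components at our disposal we have enough ``room'' to avoid the special locus, and I expect this to follow from the fact that the general effective divisor of degree $g$ is nonspecial together with a semialgebraic/genericity argument showing that the locus of special tuples $(Q_1,\dots,Q_g) \in \prod S_i$ is a proper closed subset (it is cut out by the vanishing of the above determinant, which is not identically zero). Once such a point exists, $\psi$ is a local diffeomorphism there, so its image is open in $J(\RR)_0$; being also closed and nonempty in the connected manifold $J(\RR)_0$, the image is all of $J(\RR)_0$, proving ``$\subseteq$''.

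The main obstacle is the nonspecialness claim: one must genuinely check that on a real curve with $r \ge g$ real ovals one can place a point on each of $g$ chosen ovals so that the resulting degree-$g$ divisor imposes independent conditions on canonical forms. I would argue this by contradiction: if \emph{every} tuple $(Q_1,\dots,Q_g) \in S_1\times\cdots\times S_g$ were special, then the Wronskian-type determinant $W(Q_1,\dots,Q_g) = \det\big(\omega_j(Q_i)\big)_{i,j}$ would vanish identically on this real $g$-dimensional subset; but this determinant, viewed along each variable $Q_i$ as a real-analytic (indeed algebraic) function on the circle $S_i$, cannot vanish identically unless it already vanishes as a section over all of $X_\CC$ in that variable — and iterating over $i = 1,\dots,g$ this would force $W \equiv 0$ on $X_\CC^g$, contradicting the existence of \emph{some} nonspecial degree-$g$ divisor on $X_\CC$ (true for any smooth curve of genus $g$, e.g.\ by Riemann--Roch). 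One has to be a little careful that $W$ is a well-defined section (it transforms like $\omega^{\otimes}$ in each variable, so its \emph{vanishing} is well-defined independent of local parameters), and that ``not identically zero on a product of real circles'' propagates to ``not identically zero on $X_\CC^g$'' — this uses that $X(\RR) \cap S_i$ is Zariski-dense in $X_\CC$, which holds since each $S_i$ is infinite. With this in hand the proof is complete.
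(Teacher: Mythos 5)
Your reduction to the surjectivity of $\psi\colon S_1\times\cdots\times S_g\to J(\RR)_0$ is the same as the paper's, and your computation that $d\psi$ is invertible exactly at tuples $(Q_1,\dots,Q_g)$ with $Q_1+\cdots+Q_g$ nonspecial, together with the Zariski-density argument producing such a tuple, is fine. The gap is in the very last step: \emph{a smooth map between compact connected manifolds of the same dimension whose differential is invertible at one point (hence on an open dense set) need not be surjective}. The image is closed and has nonempty interior, but a proper closed subset of a connected manifold can have nonempty interior. A concrete counterexample to your logic is $f\colon \SS^1\to\SS^1$, $e^{i\theta}\mapsto e^{i\sin\theta}$: here $df\neq 0$ at $\theta=0$, the image is compact, the target is connected, yet the image is only a proper arc. ``Local diffeomorphism somewhere'' gives openness of the image only near that one point, not openness of the whole image, so the open-and-closed argument does not close.

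The symptom that something must be wrong is that your argument never uses the homological independence of the classes $[S_1],\dots,[S_g]$ in $H_1(X,\ZZ)$ (\Cref{lem:independent_real_components}), which is the true content of the hypothesis $r\ge g$. Indeed, your Wronskian argument works verbatim if all $g$ points are taken on a \emph{single} oval $S_1$ (which is just as Zariski dense), and would then yield $\N(X)\le g+r-1$ for every smooth real curve with $X(\RR)\neq\emptyset$ — contradicting the unboundedness statement in \Cref{thm:B}. What is actually needed is a computation of the topological \emph{degree} of $\psi$, which is a global invariant not detectable from the differential at one point. The paper does this by observing that, since $[S_1],\dots,[S_g]$ extend to a basis of $H_1(X,\ZZ)$, the induced map $\psi_*\colon H_1(S,\ZZ)\to H_1(J(\RR)_0,\ZZ)$ is an isomorphism; because the cohomology rings of both tori are generated in degree one, Poincaré duality upgrades this to an isomorphism on $H_g$, so $\psi$ has degree $1$ and is surjective. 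Your differential computation is not wasted — it identifies the regular values — but to finish you must replace the ``open and closed'' step by this degree computation (or an equivalent signed count of preimages over a regular value), and that is where the hypothesis on the number of components genuinely enters.
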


\begin{proof}
    Let $T=\jac(\RR)_0\cong(\RR/\Z)^g$, a $g$-dimensional real torus. Since $S_1,\ldots,S_g$ do not disconnect $X$, their fundamental classes can be extended to an integral homology basis of $H_1(X,\Z)$ by \Cref{lem:independent_real_components}. Let $S=S_1\times\cdots\times S_g \cong (\mathbb{S}^1)^g$, another $g$-dimensional real torus. Fixing base points $P_1,\dots,P_g$ as above, we obtain a map 
    \begin{align*}
        \varphi\colon  S & \longrightarrow T \\
        (Q_1,\dots,Q_g)& \longmapsto \sum_{i=1}^g [Q_i-P_i].
    \end{align*}
    Since $S_1,\dots,S_g$ are part of a basis of $H_1(X,\Z)$, the induced map $\varphi_\ast\colon H_1(S,\Z)\to H_1(T,\Z)$ is an isomorphism. The cohomology rings of $S$ and $T$ are generated in degree $1$ (see \cite{hatcherAlgebraicTopology}*{3.11}), hence it follows from Poincaré duality that $\varphi_\ast\colon H_g(S,\Z)\to H_g(T,\Z)$ is an isomorphism, as well. This implies that $\varphi$ is a map of topological degree $1$. In particular, it is surjective, concluding the proof.
\end{proof}

\Cref{thm:generationIdComp} allows us to give a unified proof of results due to Huisman \cite{huismanGeometryAlgebraicCurves2001} and Monnier \cite{monnierDivisorsRealCurves2003}.

\begin{theorem}[{\cites{huismanGeometryAlgebraicCurves2001,monnierDivisorsRealCurves2003}}]
    \label{thm:huismanmonnier}
        Let $X$ be a smooth real curve of genus $g$ such that $X(\RR)$ has $r=g$ or $r=g+1$ connected components. Then $\N(X) \le 2g-1$.
\end{theorem}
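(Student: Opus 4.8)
The plan is to reduce the statement to \Cref{thm:generationIdComp} together with the standard translation between the totally real divisor threshold and multiples of $\phi(X(\RR))$ provided by \Cref{lem:0}. By \Cref{lem:0}, it suffices to show that $(2g-1)\,\phi(X(\RR)) = J(\RR)$, where $\phi$ is the Abel--Jacobi map with base point $P_0 \in X(\RR)$ and $J = \jac(X)$. Since $J(\RR)$ decomposes into $2^{r-1}$ cosets of the identity component $J(\RR)_0$ (by \Cref{cor:comessatti}), and these cosets are realized by classes of divisors supported on $X(\RR)$ whose restriction to the real components has prescribed parity, the argument naturally splits into two parts: first hit every coset of $J(\RR)_0$ using a controlled number of points, then sweep out all of $J(\RR)_0$ using the remaining budget via \Cref{thm:generationIdComp}.

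First I would fix $g$ components $S_1, \dots, S_g$ of $X(\RR)$ (there are $g$ or $g+1$ of them) and a point $P_i \in S_i$ on each; I may take $P_1 = P_0$. For a given class $c \in J(\RR)$, choose a representative effective totally real divisor $E_0$ of some small degree $k$ whose support meets the real components in the parities dictated by the coset of $c$; one can arrange $k \le r - 1 \le g$ by using at most one point on each of the components not among a chosen spanning set, or more carefully $k \le g-1$ after absorbing base points — the precise bookkeeping here is where the constant $2g-1$ is pinned down, and I would track it so that after subtracting off $E_0$ the residual class $c - [E_0 - kP_0]$ lies in $J(\RR)_0$ and the remaining degree budget is at least $g$. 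Then \Cref{thm:generationIdComp} says exactly that $J(\RR)_0 = \{\sum_{i=1}^g [Q_i - P_i] : Q_i \in S_i\}$, so the residual class is realized by a further effective totally real divisor of degree $g$. Adding the degrees and the correcting multiples of $P_0$ (which is itself a real point, so contributes harmlessly), every class in $J(\RR)$ of degree $2g-1$ is represented by an effective totally real divisor, giving $\N(X) \le 2g-1$.

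The main obstacle I anticipate is the parity/coset bookkeeping in the first step: one must verify that the $2^{r-1}$ connected components of $J(\RR)$ are indexed precisely by the parities of the restriction of a degree-$0$ invariant divisor to $S_1, \dots, S_{r-1}$ (the $r$-th component's parity being determined), and that each such parity vector can be achieved by a totally real effective divisor of degree at most $g-1$ supported on $g-1$ of the $S_i$'s. This is where the hypothesis $r \in \{g, g+1\}$ is essential: it guarantees enough real components are available to adjust all parities, and it guarantees (via \Cref{lem:independent_real_components}) that the chosen components' classes extend to a homology basis so that \Cref{thm:generationIdComp} applies. One also needs to be slightly careful about whether the spanning set $S_1,\dots,S_g$ in \Cref{thm:generationIdComp} can be chosen compatibly with the component used to fix the base point $P_0$; since \Cref{lem:independent_real_components} permits dropping any one real component in the dividing case, and in the non-dividing case all $r$ components' classes are independent, there is enough freedom. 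Everything else — summing degrees, invoking \Cref{lem:0} to pass from $(2g-1)\phi(X(\RR)) = J(\RR)$ back to $\N(X) \le 2g-1$ — is routine.
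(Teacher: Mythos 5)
Your overall strategy is the same as the paper's: the entire content is \Cref{thm:generationIdComp} together with a parity correction by points on the real components, and you have correctly identified both ingredients, the role of \Cref{lem:0}, and why the hypothesis $r\in\{g,g+1\}$ is needed. The gap is precisely in the step you defer, the degree bookkeeping, and as sketched it does not close. \Cref{thm:generationIdComp} realizes a class $c'\in J(\RR)_0$ as $\bigl[\sum_{i=1}^g(Q_i-P_i)\bigr]$; this is \emph{not} ``an effective totally real divisor of degree $g$'' --- the term $-\sum_{i=1}^g P_i$ is present and must be cancelled against points you have already placed, because the $P_i$ lie on $g$ distinct components of $X(\RR)$ and so cannot be absorbed into a multiple of the single base point $P_0$. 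Your accounting ``at most $g-1$ points for the parity correction plus $g$ for the identity component'' therefore ignores $g$ further points of potential negativity, and without a further idea it does not yield the bound $2g-1$.

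The missing idea is to choose the correction divisor so that it simultaneously fixes the parities \emph{and} pre-pays the $-\sum_{i=1}^g P_i$. Concretely: given an invariant divisor $D$ of degree $d\ge 2g-1$ whose restriction is odd exactly on $S_1,\dots,S_m$ (after relabelling), set $E=D-(P_1+\cdots+P_m)-2(P_{m+1}+\cdots+P_g)$. Then $E$ has even degree on every component, so its total degree $d-2g+m\ge m-1\ge-1$ is even, hence equals some $2k\ge 0$, and $[E-2kP]\in J(\RR)_0$ for any $P\in X(\RR)$. Applying \Cref{thm:generationIdComp} and adding everything back gives $D\equiv 2kP+P_{m+1}+\cdots+P_g+Q_1+\cdots+Q_g$, which is effective and totally real: on each odd component the $-P_i$ is cancelled by the one copy of $P_i$ you subtracted, and on each even component the two subtracted copies leave one $P_i$ standing alongside $Q_i$. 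This recombination is exactly where the constant $2g-1$ is pinned down, so it is not routine; once it is written out, your argument coincides with the paper's proof.
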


\begin{proof}
    Let $D$ be a conjugation-invariant divisor of degree $d\ge 2g-1$ on $X$. Let $S_1,\dots,S_r$ be the connected components of $X(\RR)$ and fix points $P_i\in S_i$ for each $i$ as above. Let $m$ be the number of components for which the restriction of $D$ to $S_i$ has odd degree. We may relabel and assume that these are the components $S_1,\dots,S_m$. Consider the divisor $E=D - (P_1+\cdots+P_m) - 2(P_{m+1}+\dots+P_g)$. By our choices, it has even degree on each connected component and hence its total degree $d-m-2(g-m)=d-2g+m\ge -1$ is also even, say $2k\ge 0$. It follows that the class of $E-2kP$ is contained in $J(\RR)_0$ for any point $P\in X(\RR)$. By \Cref{thm:generationIdComp}, there are points $Q_1,\dots,Q_g$ with $Q_i\in S_i$ such that $E-2kP\equiv \sum_{i=1}^g (Q_i-P_i)$, so that
    \[
     D\equiv 2kP+P_{m+1}+\cdots+P_g + Q_1+\cdots+Q_g.\qedhere
    \]
\end{proof}

\subsection{Bounds from metric properties}
\label{sec:lower_bound}
Let $X$ be a smooth real curve. We denote by $\len(X(\RR))$ the length of $X(\RR)$ with respect to the Bergman metric and by $\vol(J(\RR))$ the volume of the real part of the Jacobian $J = \jac(X)$ with respect to the canonical metric (see \Cref{sec:Bergman} for the definitions).
As a warm-up, we prove a lower bound on $\N(X)$ when $X(\RR)$ is connected. Recall that we always assume that $X(\RR) \neq \emptyset$ and that the Abel--Jacobi map $\phi$ is defined with base point $P_0 \in X(\RR)$, so that $\phi(X(\RR)) \subset \jac(X)(\RR)$.

\begin{proposition}
    \label{thm:lower_bound_r=1}
    Let $X$ be a smooth real curve such that $X(\RR)$ is connected and $J = \jac(X)$ its Jacobian. Then
    \[
        \N(X) \ge \frac{\vol(J(\RR))^{1/g}}{\len(X(\RR))}
    \]
\end{proposition}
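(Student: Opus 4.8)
The plan is to use the reformulation from \Cref{lem:0}: writing $C=\phi(X(\RR))\subset J=\jac(X)$ and $m=\N(X)$, we have $mC=J(\RR)$, where $mC=\{c_1+\dots+c_m\mid c_i\in C\}$. Since $X(\RR)$ is connected, i.e.\ $r=1$, \Cref{cor:comessatti} gives that $J(\RR)$ is connected, $J(\RR)\cong(\RR/\ZZ)^g$. The canonical Riemannian metric on $J$ restricts to a translation-invariant metric on the subgroup $J(\RR)$, which is therefore flat; hence $J(\RR)$ is isometric to a flat torus $\RR^g/\Lambda$, where $\Lambda$ is a lattice whose covolume equals $V:=\vol(J(\RR))$. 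Under this identification $C$ is a closed curve (the image of $X(\RR)\cong\SS^1$) of length $L:=\len(X(\RR))$, by definition of the Bergman metric as the pullback of the canonical metric along the embedding $\phi$. (We may assume $g\ge1$, and $\N(X)<\infty$, the statement being trivial otherwise.)

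First I would pass to the universal cover $\pi\colon\RR^g\to\RR^g/\Lambda\cong J(\RR)$. Lifting an arc-length parametrization of $C$ (path lifting for coverings) yields a connected rectifiable set $\widetilde C\subset\RR^g$ of length $L$ with $\pi(\widetilde C)=C$; since $\pi$ is a group homomorphism, $\pi(m\widetilde C)=mC=J(\RR)$, where $m\widetilde C=\widetilde C+\dots+\widetilde C$ ($m$ summands) is the $m$-fold Minkowski sum in $\RR^g$. As $\pi$ is a Riemannian covering, it is volume non-increasing on measurable sets (its restriction to a fundamental domain is an isometry onto a full-measure subset of the quotient), so
\[
 V=\vol\!\big(\pi(m\widetilde C)\big)\le\vol\!\big(m\widetilde C\big).
\]

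The core estimate is then purely Euclidean: $\vol(m\widetilde C)\le(mL)^g$. Indeed, because $\widetilde C=\widetilde\gamma([0,L])$ is the image of a $1$-Lipschitz arc-length parametrization, any two of its points lie within distance $\le L$ of each other, so $\operatorname{diam}(\widetilde C)\le L$; diameter being subadditive under Minkowski sums, $\operatorname{diam}(m\widetilde C)\le mL$. By the isodiametric (Bieberbach) inequality, a set of diameter $\le d$ in $\RR^g$ has volume at most that of a ball of radius $d/2$, i.e.\ $\le\omega_g(d/2)^g$ with $\omega_g=\pi^{g/2}/\Gamma(\tfrac g2+1)$ the volume of the unit ball. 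Since $\omega_g\le 2^g$ for every $g\ge1$, we get $\vol(m\widetilde C)\le\omega_g(mL/2)^g\le(mL)^g$. Combining with the previous display, $V\le(mL)^g$, that is $\N(X)=m\ge V^{1/g}/L=\vol(J(\RR))^{1/g}/\len(X(\RR))$.

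The only genuinely delicate point is the last estimate: the naive implication ``diameter $\le d$ $\Rightarrow$ contained in a ball of radius $d$'' would cost a factor $\omega_g^{1/g}>1$ in the final bound, and replacing $d$ by $d/2$ via Jung's theorem would still not suffice; it is exactly the isodiametric inequality together with the elementary fact $\omega_g\le 2^g$ that makes the constant disappear. Everything else — the reformulation via $\N=N_3$, the flat-torus structure of $J(\RR)$, and the volume behaviour of a Riemannian covering — is routine. For comparison, the proof of \Cref{thm:C} in the general case $r>1$ follows the same scheme (lift to $\RR^g/\Lambda_0$, estimate a Minkowski sum), but now the image must cover the $2^{r-1}$ components of $J(\RR)$, and the isodiametric step is replaced by a cruder volume count whose Stirling-type bookkeeping is responsible for the factor $\big((r-1)/\e\big)^{g(r-1)}$ appearing there.
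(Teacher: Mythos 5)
Your proof is correct, and it follows the same scheme as the paper's: reformulate $\N(X)$ via \Cref{lem:0}, use \Cref{cor:comessatti} to identify $J(\RR)=J(\RR)_0$ with a flat torus $\RR^g/\Lambda$, lift $\phi(X(\RR))$ to a connected set $\widetilde C\subset\RR^g$ of length $L$, and bound $\vol(m\widetilde C)$ by $(mL)^g$. The only genuine difference is the final volume estimate. The paper observes that the orthogonal projection of a connected rectifiable curve of length $L$ onto each coordinate axis has length at most $L$, so $\widetilde C$ sits inside a hypercube $B$ of side $L$ and $m\widetilde C\subset mB$, giving $(mL)^g$ immediately. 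You instead invoke $\operatorname{diam}(\widetilde C)\le L$, subadditivity of the diameter under Minkowski sums, the Bieberbach isodiametric inequality, and the bound $\omega_g\le 2^g$; this is valid and in fact yields the slightly sharper intermediate bound $\omega_g(mL/2)^g$, but it is a heavier tool than needed — the cube containment is elementary and suffices. One small inaccuracy in your closing commentary: the claim that the naive "ball of radius $d$" bound costs a factor $\omega_g^{1/g}>1$ only holds for $g\le 12$; since $\omega_g\to 0$, for large $g$ even that crude bound would do. This does not affect the proof.
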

\begin{proof}
    Recall that $\len(X(\RR))$ is equal, by definition, to the length of $\phi(X(\RR)) \subset J(\RR)$, the image of $X(\RR)$ under the Abel--Jacobi map. By \Cref{cor:comessatti}, since $X(\RR)$ has a single connected component, we have $J(\RR) = J(\RR)_0 \cong \RR^g / \Lambda$. The curve $\phi(X(\RR))$ is contained in a domain of the form $B/ \Lambda \subset J(\RR)$, where $B$ is a $g$-dimensional hypercube with side length equal to $\len(X(\RR))$. Therefore, the Minkowski-type sum $m \phi(X(\RR)) = \sum_{i=1}^m \phi(X(\RR))$ is contained in $m B / \Lambda$, and we have:
    \[
        \vol(m \phi(X(\RR))) \le \vol(m B \big/ \Lambda) \le \vol(m B) = (m\len(X(\RR)))^g
    \]
    where $\vol(m B)$ is computed with respect to the (flat) Euclidean metric of $\RR^g\cong H^0(X, \Omega)(\RR)$ induced by the Hodge inner product.

    Now, if $\vol(m \phi(X(\RR)))$ is smaller than $\vol(\jac(\RR))$, then $m < \N(X)$ (see \Cref{lem:0}). From the above inequality, this is implied by $(m\len(X(\RR)))^g < \vol J(\RR)$.  The contrapositive of this implication
    is $\N(X)\le m \Rightarrow \frac{\vol(\jac(\RR))^{1/g}}{\len(X(\RR))}\le m$. Letting $m=\N(X)$ shows the claim.
\end{proof}

We now generalize \Cref{thm:lower_bound_r=1} to real curves with an arbitrary number of connected components, proving \Cref{thm:C}.
\begin{proof}[{Proof of \Cref{thm:C}}]
    Let $X(\RR) = S_1 \sqcup \dots \sqcup S_r$ be the connected components of the real locus, and let $Z_i = \phi(S_i)$. Notice that 
    \[
        m \phi(X(\RR)) = \sum_{\substack{\alpha \in \NN^r \\ \abs{\alpha}=m}} \sum_{i=1}^r \alpha_i Z_i 
    \]
    where the addition is in the group law of $J = \jac(X) \cong \CC^g / \Lambda$. Recall that a divisor class belongs to the identity component $\jac(\RR)_0$ if and only if it has even degree on each connected component $S_i$. Therefore, we obtain:
    \begin{equation}\label{eq:sum_identity}
        2m \phi(X(\RR)) \cap \jac(\RR)_0 = \sum_{\substack{\alpha \in (2\NN)^r \\ \abs{\alpha} = 2m}}\sum_{i=1}^r \alpha_i Z_i = 2 \sum_{\substack{\alpha \in \NN^r \\ \abs{\alpha}=m}} \sum_{i=1}^r \alpha_i Z_i.
    \end{equation}
    
    Notice that, if $\ell_i = \len(Z_i) = \len(S_i)$, then there exists a $g$-dimensional hypercube $B_i$ with side length $\ell_i$ such that $Z_i \subset B_i / \Lambda$. Therefore $\sum_{i=1}^r Z_i \subset B / \Lambda$ for some $g$-dimensional hypercube $B$ with side length equal to $\len(X(\RR)) = \sum_{i=1}^r \ell_i$. We deduce
    \begin{equation}\label{eq:volume_cube}
        \vol(k \sum_{i=1}^r Z_i) \le \vol(k B \big/ \Lambda) \le (k\len(X(\RR)))^g
    \end{equation}
    for all $k \in \NN$. Hence we obtain
    \begin{align*}
        \vol\left( 2m \phi(X(\RR)) \cap \jac(\RR)_0 \right) &= \vol \left(2 \sum_{\substack{\alpha \in \NN^r \\ \abs{\alpha}=m}} \sum_{i=1}^r \alpha_i Z_i \right)  & \text{using \eqref{eq:sum_identity}}\\
        & = 2^g \vol \left( \sum_{i=1}^r \left(\sum_{\substack{\alpha \in \NN^r \\ \abs{\alpha}=m}}\alpha_i\right) Z_i \right) & \\
        & = 2^g \vol \left( \frac{m}{r}\binom{m+r-1}{r-1}\sum_{i=1}^r Z_i \right) &\\
        & \le 2^g \left(\frac{m}{r} \binom{m+r-1}{r-1} \len(X(\RR))\right)^g & \text{using \eqref{eq:volume_cube}}\\
        & \le 2^g \left(\frac{m}{r}\len(X(\RR))\right)^g \left(\frac{\e (m+r-1)}{r-1}  \right)^{g(r-1)} & \text{since $\binom{n}{k}\le (\e n /k)^k$} \\
        & \le \left(\frac{2\len(X(\RR))}{r}\right)^g \left(\frac{\e}{r-1}\right)^{g(r-1)}(m+r-1)^{gr}
    \end{align*}
    (here, we use the convention $(\e n/k)^k=1$ for $k=0$).
    Now, if
    \begin{equation*}
        \left(\frac{2\len(X(\RR))}{r}\right)^g \left(\frac{\e}{r-1}\right)^{g(r-1)}(m+r-1)^{gr} < \vol(\jac(\RR)_0),
    \end{equation*}
    then $2m \phi(X(\RR)) \cap \jac(\RR)_0 \subsetneq \jac(\RR)_0$, and thus $\N(X) > 2m$ (see \Cref{lem:0}).
    The contrapositive of this implication
    is 
    \begin{equation*}
    \frac{\N(X)}{2}\leq m \Rightarrow \left(\frac{\vol(\jac(\RR)_0)}{\len(X(\RR))^g} \left(\frac{r}{2}\right)^g \left(\frac{r-1}{\e}\right)^{g(r-1)}\right)^{\frac{1}{gr}}+1-r \le m.
    \end{equation*}
    Letting $m=\ceil{{\N(X)}/{2}}$, this implies the claim.
\end{proof}

\section{A genus two example}
\label{sec:genus2}
We now discuss in detail an example in genus $2$. For $0<\epsilon<1$, let \[f_\epsilon(t) = (1+t)((1-\epsilon)^2+t)((1+\epsilon)^2+t)\] and consider the family of curves $X_\epsilon$ with hyperelliptic model \[w^2 = f_{\epsilon}(z^2)\] For more details on real hyperelliptic curves, we refer the reader e.g.~to \cite{grossRealAlgebraicCurves1981a}*{Sec.~6}. We will show that the family $X_\epsilon$ is a quantitative, explicit version of \cite{benoistIntegralHodgeConjecture2020}*{Rem.~9.26}.

We first note that for every $0<\epsilon<1$ the polynomial $f_\epsilon(z^2)$ does not have multiple roots and is strictly positive on $\RR$. This implies that $X_\epsilon$ is a smooth curve of topological type $(2,1,0)$. We can realize them as in \Cref{sec:comessatti} by taking $\tilde{g} = 1$, $r=1$ and $m=0$. A period matrix using a Comessatti basis has the structure
\begin{equation*}
    \Pi^{(\epsilon)} = \left( \begin{array}{cc|cc}
        1 & {0} & {iT^{(\epsilon)}_{11}} & {\frac{1}{2}+iT^{(\epsilon)}_{12}} \\
        {0} & {1} & {\frac{1}{2}+iT^{(\epsilon)}_{12}} & iT^{(\epsilon)}_{22}
    \end{array} \right)
\end{equation*}
where the matrix $T^{(\epsilon)}=(T^{(\epsilon)}_{ij})_{i,j}$ is real symmetric and positive definite for all $0<\epsilon<1$.

Next, we will determine the stable curves $X_0$ and $X_1$ in the limit for $\epsilon\to0^+$ and $\epsilon\to1^-$, respectively. The curve defined by $w^2=f_1(z^2)$ has one singularity at $(0,0)$ and this is an ordinary node. Its normalization is a smooth curve of topological type $(1,2,0)$. This shows that the stable curve $X_1$ is the curve defined by $w^2=f_1(z^2)$. 

It is more involved to determine the curve $X_0$ because the polynomial $f_0(z^2)$ has two roots of multiplicity $3$, so that $w^2=f_0(z^2)$ does not define a stable curve. For $0<\epsilon<1$ the canonical map $X_\epsilon\to\PP^1$ has the six branch points given by the columns of the matrix
\begin{equation*}
A_\epsilon= \begin{pmatrix}
  1&1&1&1&1&1 \\
  i&-i&i(1-\epsilon)&-i(1-\epsilon)&i(1+\epsilon)&-i(1+\epsilon)
 \end{pmatrix}.
\end{equation*}
This defines a family of rational curves with six marked points and we first compute the limit $Y$ for $\epsilon\to0^+$ of this family in $\overline{\mathcal{M}}_{0,6}$ using the connection to regular matroid subdivisions developed in \cite{chowquotientsI}*{Sec.~4.1}. Treating $\epsilon$ as a formal variable with valuation $1$, the matrix $A_\epsilon$ defines a height function on the vertices of the hypersimplex
\begin{equation*}
 \Delta(2,6)=\conv(e_i+e_j\mid 1\leq i<j\leq 6)\subset\RR^6
\end{equation*}
which assigns to $e_i+e_j$ the valuation of the determinant of the submatrix of $A_\epsilon$ with columns indexed by $i$ and $j$. Using the software \texttt{polymake} \cite{polymake} we compute the induced regular matroid subdivision of $\Delta(2,6)$. It consists of two maximal cells that are interchanged by the permutation of the coordinates given by
$\sigma=(12)(34)(56)$. This shows that the complexification of the limit curve $Y$ in $\overline{\mathcal{M}}_{0,6}$ consists of two irreducible components intersecting in one point and each with three marked smooth points. Complex conjugation acts on the columns of $A_\epsilon$ as the permutation $\sigma$. Thus the two irreducible components of $Y_\C$ are interchanged by the complex conjugation.
By \cite{fultonNotesStableMaps1997}*{Prop.~6} (``properness of the Deligne--Mumford stack of stable maps''), the limit of the stable maps $X_\epsilon\to\PP^1$, where we mark on $X_\epsilon$ the ramification points, exists as a stable map $Z\to Y$ that is branched along the special points of $Y$ and of degree $2$. This shows that the complexification of $Z$ consists of two smooth curves of genus $1$, each with three smooth marked points, that are exchanged by the complex conjugation and which meet transversally at one point, namely the point that is mapped to the singularity of $Y$. Since $Z$ remains stable when forgetting about the marked points, it follows that $Z$ is equal to $X_0$.
This shows that the family $X_\epsilon$ is actually a concrete realization of \cite{benoistIntegralHodgeConjecture2020}*{Rem.~9.26}, see also \Cref{ex:div} for $r=k=1$. It follows that $\N(X_\epsilon)$ is unbounded for $\epsilon\to0^+$. In the next section, we will use the metric approach from \Cref{sec:lower_bound} to make this quantitative. We will also see that $\N(X_\epsilon)$ remains bounded for $\epsilon\to1^-$.

\subsection{A lower bound for \texorpdfstring{$\N(X_\epsilon)$}{}}
\label{sec:example_1} In the following, we want to apply \Cref{thm:lower_bound_r=1} to compute a lower bound for $\N(X_\epsilon)$. To do that, we need to explicitly obtain the period matrix $\Pi^{(\epsilon)}$. Details on finding the homology basis for hyperelliptic curves can be found e.g.~in \cite{farkasRiemannSurfaces1992}. A more general framework for period computations for real curves will become available in \cite{baldiComputingPeriodsReal}.
This problem for real hyperelliptic curves with $r=g+1$ connected components has also been studied in \cite{buserGeodesicsPeriodsEquations2001c}.

The branch points of the two-to-one map $X_{\epsilon} \to \PP^1$ are the roots of $f_{\epsilon}(z^2)$, i.e., $\pm i(1+\epsilon)$, $\pm i$ and $\pm i(1-\epsilon)$. A basis of $H^0(X, \Omega)$ invariant under complex conjugation is e.g.~$\eta_1 = \dd z/ w$ and $\eta_2 = z\dd z /w$. Following the notations of \Cref{sec:comessatti}, and using the substitution $z^2 = - t$, we can show that:

\begin{align*}
    \int_{b_1} \eta_1 &= \int_{\beta} \frac{\dd z}{w}=2i \int_{1}^{(1+\epsilon)^2} \frac{\dd t}{\sqrt{tf_\epsilon(-t)}} = \frac{2i}{\sqrt{\epsilon}}K\left(\frac{(-1+\epsilon)\sqrt{2+\epsilon}}{2}\right)\\
    \int_{b_2} \eta_1 &= \int_{\alpha} \frac{\dd z}{w} = 2\int_{(1-\epsilon)^2}^1 \frac{\dd t}{\sqrt{-tf_\epsilon(-t)}} = \frac{2}{\sqrt{\epsilon}}K\left(\frac{(-1+\epsilon)\sqrt{2+\epsilon}}{2}\right)\\
    \int_{a_1} \eta_1 &= \int_{\alpha+\alpha^\sigma} \frac{\dd z}{w} = \frac{4}{\sqrt{\epsilon}}K\left(\frac{(-1+\epsilon)\sqrt{2+\epsilon}}{2}\right)\\
    \int_{a_2} \eta_1 &= \int_{\beta+\beta^\sigma} \frac{\dd z}{w} = 0 \\
    \int_{b_1} \eta_2 &= \int_{\beta} \frac{z\dd z}{w} = 2 \int_{1}^{(1+\epsilon)^2} \frac{\dd t}{\sqrt{f_\epsilon(-t)}} = \frac{2}{\sqrt{\epsilon}}K\left(\frac{\sqrt{2+\epsilon}}{2}\right)\\
    \int_{b_2} \eta_2 &= \int_{\alpha} \frac{z\dd z}{w} =2i \int_{(1-\epsilon)^2}^{1} \frac{\dd t}{\sqrt{-f_\epsilon(-t)}} = \frac{2i}{\sqrt{\epsilon}}K\left(\frac{\sqrt{2-\epsilon}}{2}\right)\\
    \int_{a_1} \eta_2 &= \int_{\alpha+\alpha^\sigma} \frac{z\dd z}{w} = 0\\
    \int_{a_2} \eta_2 &= \int_{\beta+\beta^\sigma} \frac{z\dd z}{w} = \frac{4}{\sqrt{\epsilon}}K\left(\frac{\sqrt{2+\epsilon}}{2}\right)
\end{align*}
where $K(k) = \int_0^1 \frac{\dd t}{\sqrt{(1-t^2)(1-k^2t^2)}}$ is the complete elliptic integral of the first kind. Above and in the following, the computational results were obtained with the help of \texttt{Maple} \cite{maple} and \texttt{Mathematica} \cite{Mathematica}.
Normalizing the basis of holomorphic $1$-forms as $\omega_i = \eta_i / \left(\int_{a_i}\eta_i\right)$, we obtain the period matrix, with respect to $\omega_1, \omega_2$ and $a_1, a_2, b_1, b_2$:
\begin{equation*}
    \Pi^{(\epsilon)} = \left( \begin{array}{cc|cc}
        1 & {0} & { i \frac{K\left(\frac{(-1+\epsilon)\sqrt{2+\epsilon}}{2}\right)}{2K\left(\frac{(1+\epsilon)\sqrt{2-\epsilon}}{2}\right)}} & {\frac{1}{2}} \\
        {0} & {1} & {\frac{1}{2}} & i \frac{K\left(\frac{\sqrt{2-\epsilon}}{2}\right)}{2K\left(\frac{\sqrt{2+\epsilon}}{2}\right)}
    \end{array} \right)
\end{equation*}
An orthonormal basis of differentials, with respect to the canonical metric of the Jacobian (see \Cref{sec:Bergman}), is therefore:
\begin{align*}
    \theta_1^{(\epsilon)} &= \frac{\sqrt{\epsilon}}{4}\left({{2K\left(\frac{(-1+\epsilon)\sqrt{2+\epsilon}}{2}\right)K\left(\frac{(1+\epsilon)\sqrt{2-\epsilon}}{2}\right)}}\right)^{-\frac{1}{2}} \frac{\dd z}{w} \\ \theta_2^{(\epsilon)} &= \frac{\sqrt{\epsilon}}{4} \left({{2K\left(\frac{\sqrt{2-\epsilon}}{2}\right)K\left(\frac{\sqrt{2+\epsilon}}{2}\right)}}\right)^{-\frac{1}{2}} \frac{z\dd z}{w}
\end{align*}
The image of the Abel--Jacobi map with respect to $\theta_1^{(\epsilon)}$ and $\theta_2^{(\epsilon)}$, and the fundamental domain of the Jacobian are shown in \Cref{fig:example210}: since $\phi(X(\RR))$ is getting smaller and smaller, intuitively, $\N(X_\epsilon)$ is growing to infinity.
\begin{figure}
    \begin{center}
        \begin{tabular}{c c}
                \includegraphics[width=0.45\textwidth]{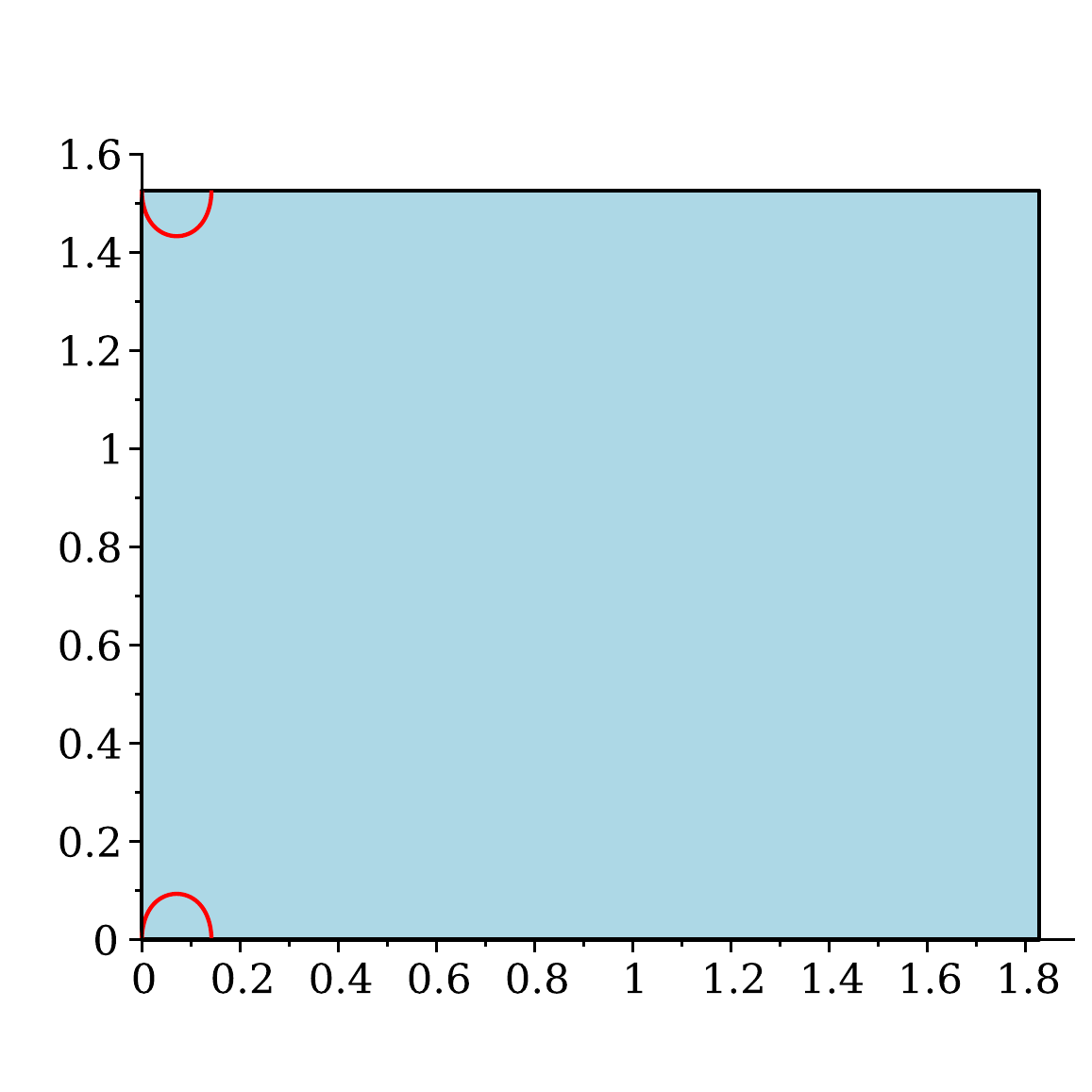} & \includegraphics[width=0.45\textwidth]{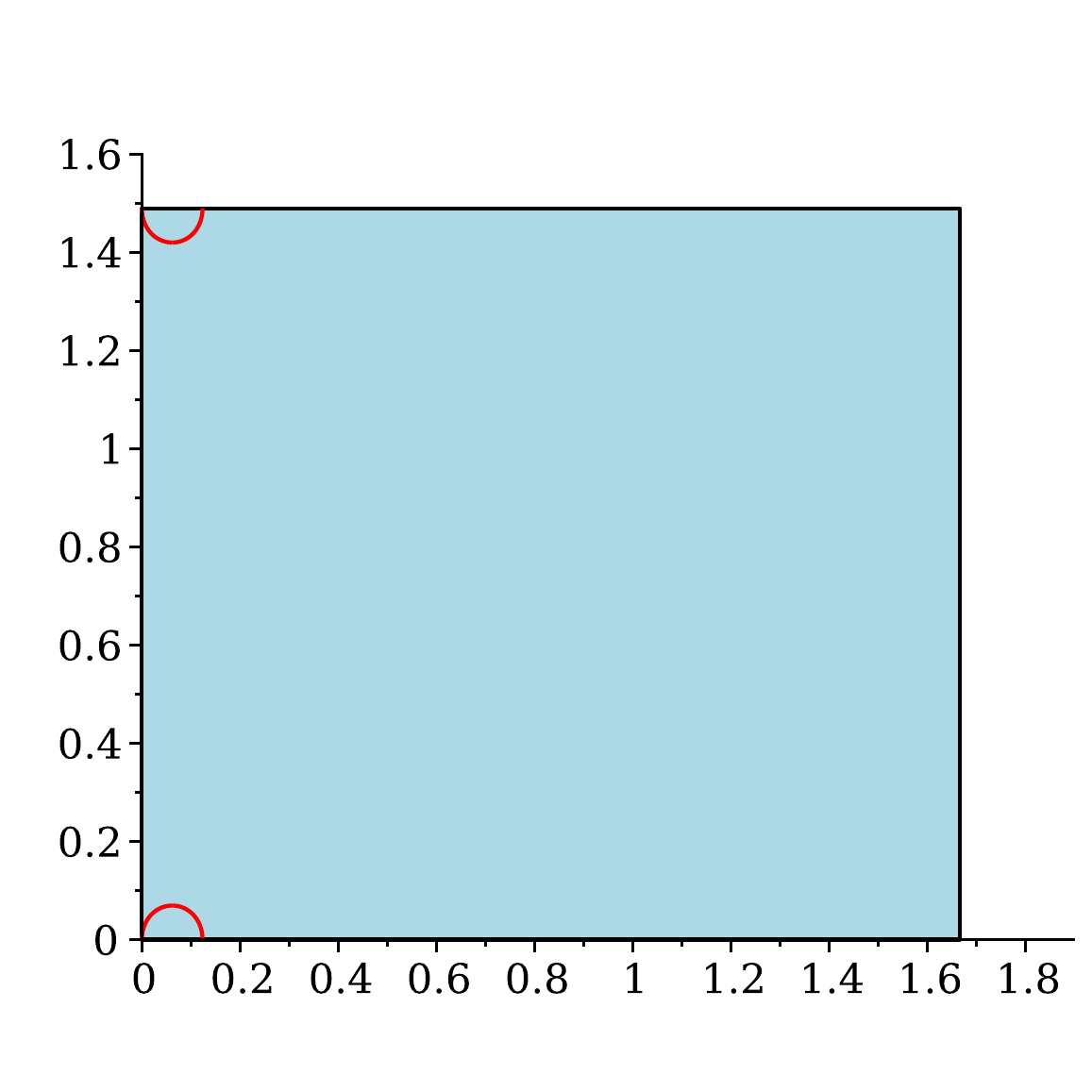}
                \\
                {$\epsilon=0.65$} & { $\epsilon=0.45$}
                \\  \includegraphics[width=0.45\textwidth]{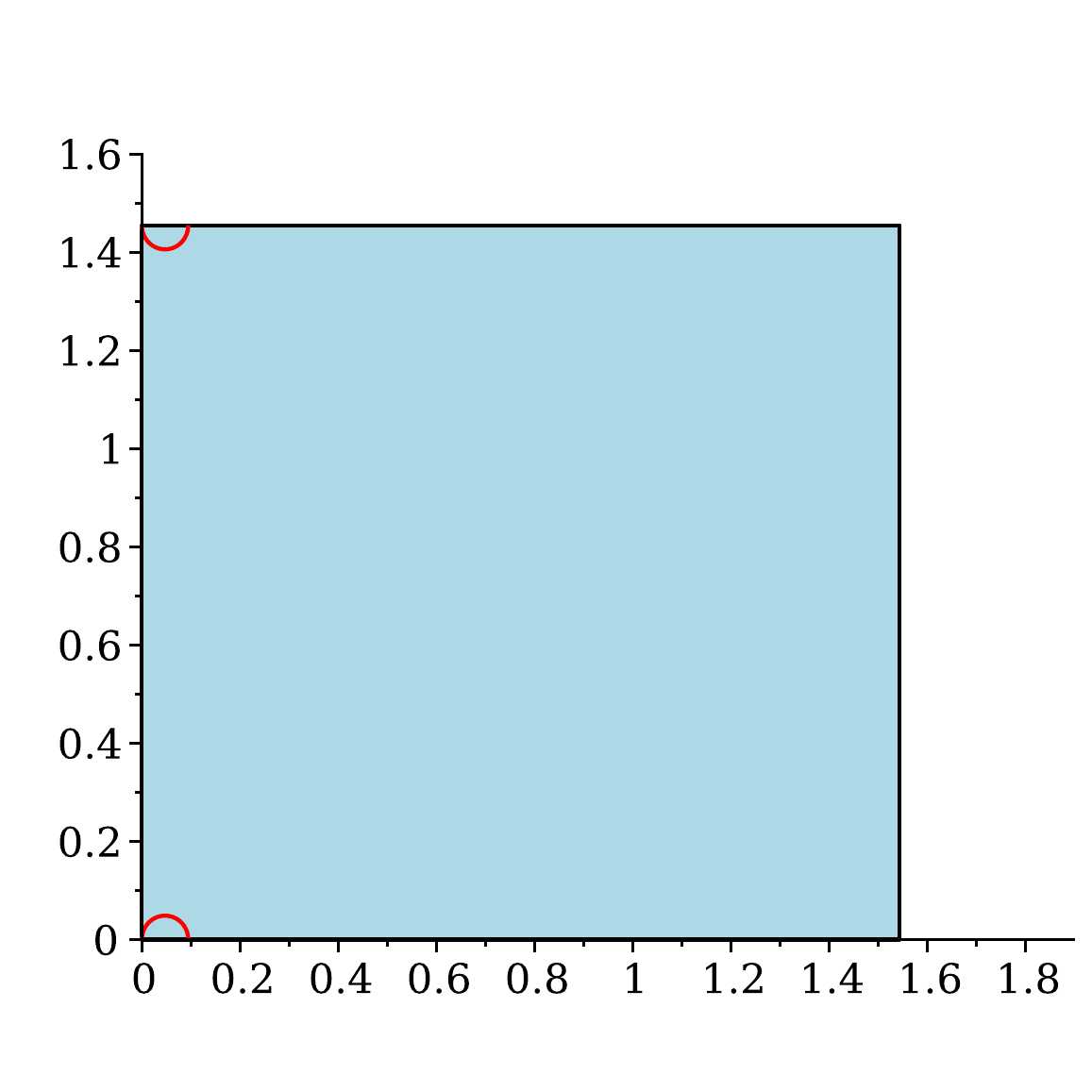} & \includegraphics[width=0.45\textwidth]{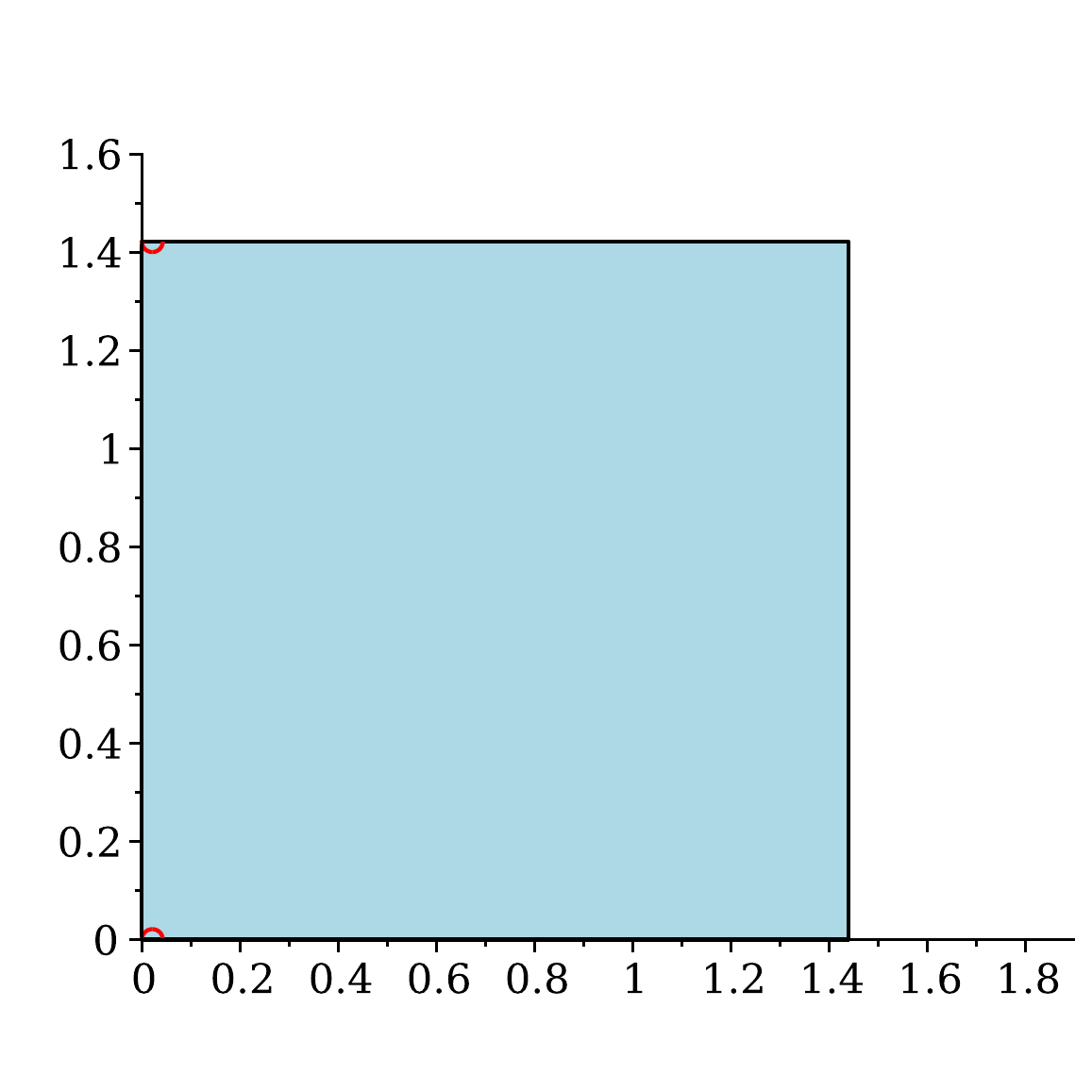}
                \\
                { $\epsilon=0.25$} & { $\epsilon=0.05$}
    \end{tabular}
\end{center}
\caption{For different values of $\epsilon$: in red, the image of $X_\epsilon(\RR)$ by the Abel--Jacobi map w.r.t. $\theta_1^{(\epsilon)}, \theta_2^{(\epsilon)}$; in blue, the fundamental domain of $J_{\epsilon}(\RR)$.
}
\label{fig:example210}
\end{figure}

We want to make the previous statement formal and quantitative using \Cref{thm:lower_bound_r=1}. The volume of the real part of the Jacobian $J_\epsilon = \jac(X_\epsilon)$ (which has only one connected componend by \Cref{cor:comessatti}) is, by \Cref{cor:volume_jacobian}:
\[
    \vol(J_\epsilon(\RR)) = \vol(J_\epsilon(\RR)_0 ) = 2\sqrt{\frac{K\left(\frac{(1+\epsilon)\sqrt{2-\epsilon}}{2}\right)K\left(\frac{\sqrt{2+\epsilon}}{2}\right)}{K\left(\frac{(-1+\epsilon)\sqrt{2+\epsilon}}{2}\right)K\left(\frac{\sqrt{2-\epsilon}}{2}\right)}} 
\]
and $\vol(J_\epsilon(\RR)) \to 2$ from above as $\epsilon \to 0^+$.

We now study $\len(X_\epsilon(\RR)) = \len(\phi(X_{\epsilon}(\RR)))$, where $\phi$ is the Abel--Jacobi map. Recall that, writing $\theta_1^{(\epsilon)} = \frac{\sqrt{\epsilon}}{4}h_1(\epsilon)\frac{\dd z}{w}$ and $\theta_2^{(\epsilon)} = \frac{\sqrt{\epsilon}}{4}h_2(\epsilon)\frac{z\dd z}{w}$, the Bergman metric (see \Cref{sec:Bergman}) is
\[
    \theta_1^{(\epsilon)}\overline{\theta}_1^{(\epsilon)} + \theta_2^{(\epsilon)}\overline{\theta}_2^{(\epsilon)} = \frac{\epsilon}{16} \ \frac{h_1(\epsilon)^2+h_2(\epsilon)^2\abs{z}^2}{\abs{w}^2} \ \dd z \dd \overline{z}.
\]
Writing $z = x+iy$ and since $X(\RR) \to \PP^1(\RR)$ is a double cover (because $f_\epsilon$ is positive on $\RR$), we get
\begin{align*}
    \len(X_\epsilon(\RR)) & = 2 \int_{-\infty}^{{\infty}} \sqrt{\frac{\epsilon}{16} \ \frac{h_1(\epsilon)^2+h_2(\epsilon)^2x^2}{f_\epsilon(x^2)}} \dd x = \sqrt{\epsilon} \int_{0}^{{\infty}} \sqrt{\frac{h_1(\epsilon)^2+h_2(\epsilon)^2x^2}{f_\epsilon(x^2)}} \dd x \\ & = \frac{\sqrt{\epsilon}}{2} \int_{0}^{{\infty}} \sqrt{\frac{h_1(\epsilon)^2+h_2(\epsilon)^2t}{tf_\epsilon(t)}}\dd t \le \frac{\sqrt{\epsilon}h_1(0)}{2} \int_{0}^{{\infty}} \sqrt{\frac{1+t}{tf_\epsilon(t)}}\dd t \\
    & = \frac{\sqrt{2}}{4K\left(\frac{\sqrt{2}}{2}\right)}\sqrt{\epsilon} \ \underbrace{\left(\frac{2 i K \left(\frac{\epsilon -1}{1+\epsilon}\right) \sqrt{\epsilon}-4 \sqrt{\epsilon} K \left(\frac{2 \sqrt{\epsilon}}{1+\epsilon}\right)+K \left(\frac{1+\epsilon}{2 \sqrt{\epsilon}}\right) \epsilon +K \left(\frac{1+\epsilon}{2 \sqrt{\epsilon}}\right)}{\left(1+\epsilon \right) \sqrt{\epsilon}}\right)}_{\textstyle{\eqqcolon \ell(\epsilon) \xrightarrow[\epsilon \to 0^+]{}\pi}}
\end{align*}
where the middle inequality holds true since $h_j(\epsilon) \le h_1(0)$ for $j=1,2$ and all $0<\epsilon<1$.
Combining \Cref{thm:lower_bound_r=1} with the above computations, and since $\ell(\epsilon) <4$ for $0 < \epsilon < 1/2$, we finally conclude that
\begin{equation}
    \label{eq:numerical_lower_bound}
    \N(X_\epsilon) \ge \frac{K\left(\frac{\sqrt{2}}{2}\right)}{\sqrt{\epsilon}} = \frac{1.854\ldots}{\sqrt{\epsilon}} 
\end{equation}
for $0<\epsilon <1/2$, or, more precisely, $\N(X_\epsilon)$ grows at least as $4K\left(\frac{\sqrt{2}}{2}\right)(\pi \sqrt{\epsilon})^{-1}$ for $\epsilon \to 0^+$.
In the next section, we show that this lower bound is asymptotically tight, up to a constant.

\subsection{Refined analysis}
\label{sec:refined}
While in \Cref{sec:example_1} we applied general results to the hyperelliptic family $X_\epsilon$, in this section we develop a refined analysis for this special example. The discussion will also show possible future research directions.

We will need the following elementary lemma.
\begin{lemma}\label{lem:plane_convex}
    Let $\Gamma \subset \RR^2$ be a, smooth, closed, simple curve enclosing a convex region of $\RR^2$. Then for all $\alpha \in \conv(\Gamma)$ there exists $\gamma_1, \gamma_2 \in \Gamma$ such that $\alpha = \frac{1}{2}(\gamma_1+\gamma_2)$.
\end{lemma}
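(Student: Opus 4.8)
The plan is to reduce the statement to a one-dimensional intermediate value argument. Since $\Gamma$ encloses a convex region $K = \conv(\Gamma)$, I would first fix an arbitrary point $\alpha \in K$ and consider the ``reflected'' curve $\Gamma' = 2\alpha - \Gamma = \{\, 2\alpha - \gamma \mid \gamma \in \Gamma \,\}$, which is again a smooth simple closed curve bounding the convex region $2\alpha - K$. The condition $\alpha = \tfrac{1}{2}(\gamma_1 + \gamma_2)$ with $\gamma_1, \gamma_2 \in \Gamma$ is exactly the condition that $\gamma_1 \in \Gamma \cap \Gamma'$: indeed $\gamma_2 = 2\alpha - \gamma_1$ lies on $\Gamma$ precisely when $\gamma_1$ lies on $\Gamma' = 2\alpha - \Gamma$. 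So the whole claim becomes: $\Gamma \cap \Gamma' \neq \emptyset$ whenever $\alpha \in K$.

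To see the intersection is nonempty, I would use that $\alpha \in K = \conv(\Gamma)$ implies $\alpha \in 2\alpha - K$ as well (since $\alpha = 2\alpha - \alpha$), so the two closed convex regions $K$ and $2\alpha - K$ share the point $\alpha$; moreover they are centrally symmetric to each other about $\alpha$. If the boundaries $\Gamma$ and $\Gamma'$ were disjoint, then one of the regions would be contained in the interior of the other (two simple closed curves with disjoint images either have disjoint interiors or one interior contains the other's closure — and disjoint interiors is impossible here since both contain $\alpha$). But $K$ and $2\alpha - K$ have the same area (reflection is area-preserving), so neither can be strictly contained in the other; contradiction. Hence $\Gamma \cap \Gamma' \neq \emptyset$, which gives the desired $\gamma_1, \gamma_2$.

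Alternatively, and perhaps cleaner to write, I would argue directly via a degree/continuity argument: parametrize $\Gamma$ as $\gamma \colon \SS^1 \to \RR^2$ and consider the map $F \colon \SS^1 \times \SS^1 \to \RR^2$, $F(s,t) = \tfrac12(\gamma(s) + \gamma(t)) - \alpha$. One checks that the restriction of $F$ to the diagonal $\{(s,s)\}$ traces out $\Gamma - \alpha$, a loop winding once around the origin since $\alpha$ is in the region it bounds, while $F$ restricted to the anti-diagonal may behave differently; comparing these, a nonvanishing $F$ on all of $\SS^1 \times \SS^1$ would force incompatible winding numbers. For the write-up I would favor the first (Jordan-curve) argument since it only uses the Jordan curve theorem and the area-preservation of reflection, both elementary.

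\textbf{Main obstacle.} The delicate point is the dichotomy ``two disjoint simple closed curves in the plane either bound disjoint regions or one region contains the other'' and the fact that central reflection preserves area; these are standard but should be invoked carefully, and one must make sure the convexity hypothesis on $K$ is actually used — in fact for the area argument convexity of $\conv(\Gamma)$ is automatic, but one does need $\alpha \in \conv(\Gamma)$ rather than merely $\alpha$ inside $\Gamma$, which is the same thing here precisely because the region enclosed by $\Gamma$ is convex. I would make sure this identification (interior of $\Gamma$ equals $\conv(\Gamma)$) is stated explicitly so the reflected point $\alpha$ lands back in the reflected region.
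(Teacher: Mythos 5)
Your argument is correct, but it takes a different route from the paper, whose entire proof is the single line ``This follows from the intermediate value theorem'' --- the intended argument being, presumably, the rotating--chord one: for $\alpha$ in the interior of $K=\conv(\Gamma)$ and each direction $\theta$, the line through $\alpha$ in direction $\theta$ meets $\Gamma$ in exactly two points $a(\theta),b(\theta)$ (by convexity), and the signed displacement of the midpoint $\tfrac12(a(\theta)+b(\theta))$ from $\alpha$ along that line changes sign as $\theta\mapsto\theta+\pi$ swaps the endpoints, so it vanishes somewhere. Your reduction to $\Gamma\cap(2\alpha-\Gamma)\neq\emptyset$ and the subsequent Jordan-curve-plus-area contradiction is a valid alternative: if the two boundaries were disjoint, one convex body would sit inside the open interior of the other (disjoint interiors being excluded since both contain $\alpha$, and the case $\alpha\in\Gamma$ being trivial), forcing a strict area drop that central reflection forbids. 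What the paper's approach buys is brevity and genuinely one-dimensional continuity (no Jordan curve theorem, no measure theory); what yours buys is that the convexity hypothesis is isolated cleanly in the single step $\conv(\Gamma')\subset K^{\circ}$ and in the identification of the enclosed region with $\conv(\Gamma)$, which you rightly flag. One small point to make explicit if you write this up: ``same area, hence neither is contained in the other'' should be sharpened to ``a compact set contained in the open interior of a convex body misses a neighbourhood of the boundary, hence has strictly smaller area,'' since equality of areas alone does not immediately rule out containment. Your second, degree-theoretic sketch is too vague as stated (the behaviour of $F$ on the anti-diagonal is not analysed), but you correctly defer to the first argument.
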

\begin{proof}
    This follows from the intermediate value theorem.
\end{proof}

The differential of the Abel--Jacobi map, in our case expressed using $\theta_1^{(\epsilon)}$ and $\theta_1^{(\epsilon)}$ and with base point $P_0^{(\epsilon)} = \lim_{t\to\infty} (-t,\sqrt{f_{\epsilon}(t^2)}) \in X_\epsilon(\RR)$, is the (affine) canonical map. After shifting the fundamental domain of the Jacobian as in \Cref{fig:example210bis}, it is then possible to show that $\phi(X_\epsilon(\RR)) \subset \RR^2$ satisfies the hypotheses of \Cref{lem:plane_convex}.
\begin{figure}
    \begin{center}
        \begin{tabular}{c c}
                \includegraphics[width=0.45\textwidth]{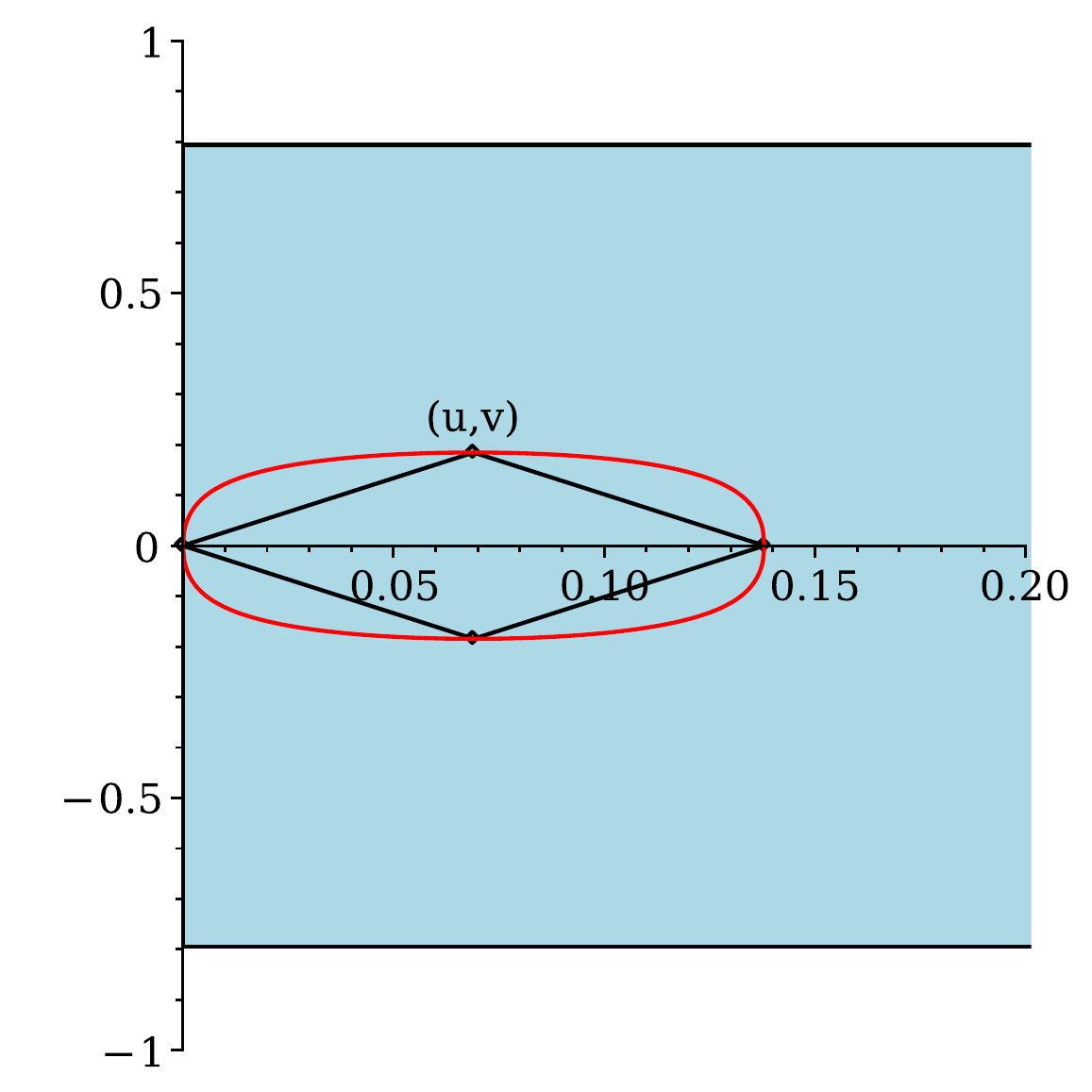} & \includegraphics[width=0.45\textwidth]{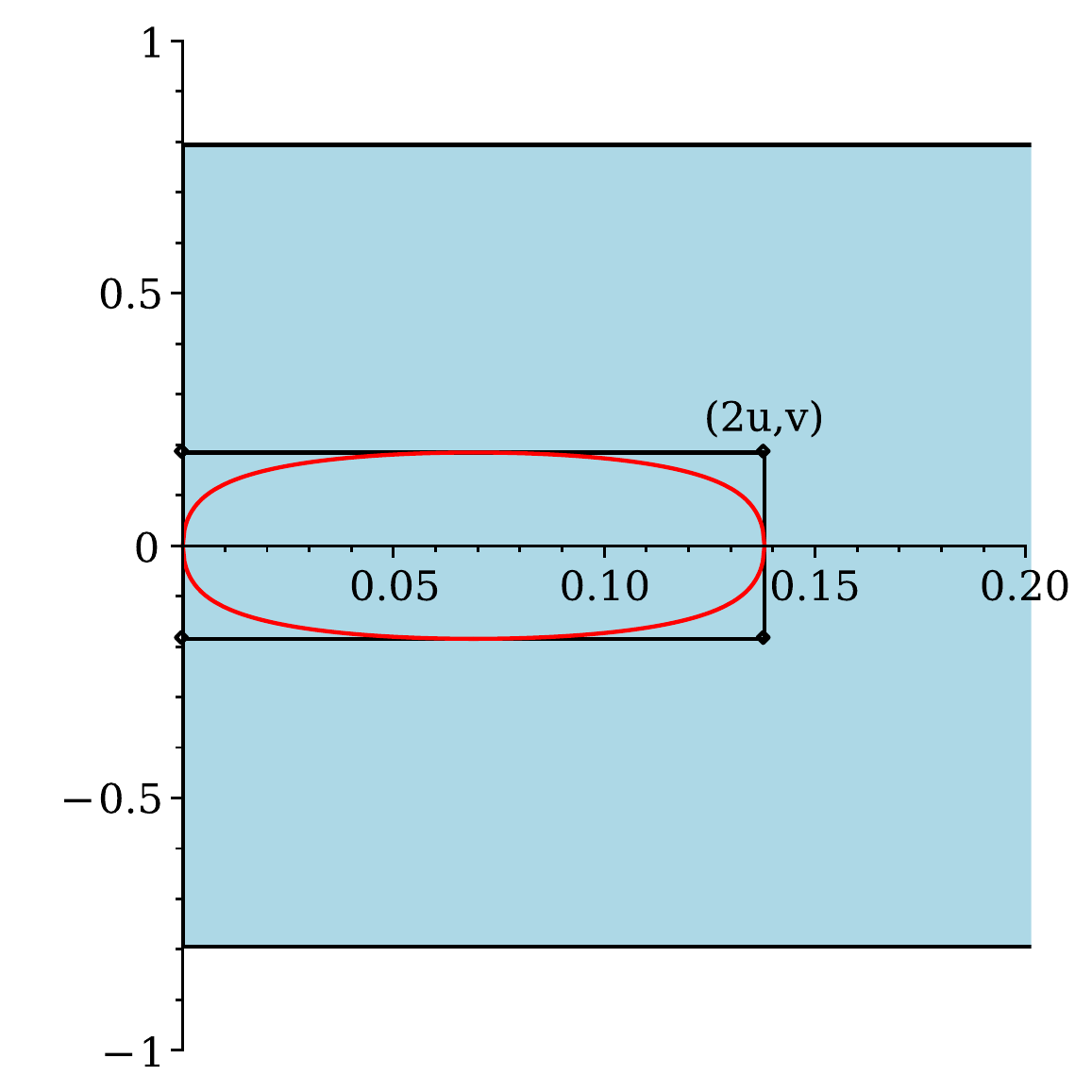}
\end{tabular}
\caption{For $\epsilon=0.95$: inner and outer approximation of $\conv(\phi(X_\epsilon(\RR)))$
}
\label{fig:example210bis}
\end{center}
\end{figure}

We now give a simple, polyhedral inner approximation of $\conv(\Gamma) \subset \RR^2$. Writing $\theta_1^{(\epsilon)} = \frac{\sqrt{\epsilon}}{4}h_1(\epsilon)\frac{\dd z}{w}$ and $\theta_2^{(\epsilon)} = \frac{\sqrt{\epsilon}}{4}h_2(\epsilon)\frac{z\dd z}{w}$ as before, let
\begin{align*}
    u=u(\epsilon) & = \frac{\sqrt{\epsilon}}{4}h_1(\epsilon) \int_{0}^{{\infty}}\frac{\dd x}{\sqrt{f_\epsilon(x^2)}} = \frac{\sqrt{\epsilon}}{8}h_1(\epsilon) \int_{0}^{{\infty}}\frac{\dd t}{\sqrt{tf_\epsilon(t)}} \\ & = \frac{h_1(\epsilon)}{4} \frac{F\left(\sqrt{2\epsilon-\epsilon^2}, \frac{2}{(\epsilon+1)\sqrt{2-\epsilon}}\right)}{(\epsilon+1)\sqrt{2 - \epsilon}} = \frac{1}{4\sqrt{2}K\left(\frac{\sqrt{2}}{2}\right)}\sqrt{\epsilon} + o(\sqrt{\epsilon}) \\
    v=v(\epsilon) & = \frac{\sqrt{\epsilon}}{4}h_2(\epsilon) \int_{0}^{{\infty}}\frac{x\dd x}{\sqrt{f_\epsilon(x^2)}}  = \frac{\sqrt{\epsilon}}{8}h_2(\epsilon) \int_{0}^{{\infty}}\frac{\dd t}{\sqrt{f_\epsilon(t)}} \\ & = \frac{h_2(\epsilon)}{8}\left(K\left(\frac{\sqrt{\epsilon+2}}{2}\right)+F\left(\epsilon-1,\frac{\sqrt{\epsilon+2}}{2} \right)\right) = \frac{1}{4\sqrt{2}K\left(\frac{\sqrt{2}}{2}\right)}\sqrt{\epsilon} + o(\sqrt{\epsilon})
\end{align*}
where $F(a,k) = \int_0^a \frac{\dd t}{\sqrt{(1-t^2)(1-k^2t^2)}}$ is the incomplete elliptic integral of the first kind. The point $(u(\epsilon), v(\epsilon)) \in \phi(X(\RR))$ is obtained by integrating over one fourth of the total length of $X(\RR)$ (see \Cref{fig:example210bis}, left).

Let $P=P^{(\epsilon)} = \conv((0,0), (u,v), (2u,0), (u,-v))$ be the rhombus in \Cref{fig:example210bis}. Notice that $P^{(\epsilon)} \subset \phi(X_\epsilon(\RR))$ by construction. \Cref{lem:plane_convex} implies that $2m\phi(X_\epsilon(\RR)) \supset 2m P^{(\epsilon)}$ for all $m$. Therefore, if $2m P^{(\epsilon)}/ \Lambda = J_\epsilon(\RR)$, then $\N(X_\epsilon) \le 2m$. Since \[2m P^{(\epsilon)} = \conv((0,0), 2m(u,v), 2m(2u,0), 2m(u,-v))\] and the fundamental domain has side lengths
\[
\ell_1 = \sqrt{\frac{2K\left(\frac{(1+\epsilon)\sqrt{2-\epsilon}}{2}\right)}{K\left(\frac{(-1+\epsilon)\sqrt{2+\epsilon}}{2}\right)}} \quad\quad \ell_2 = \sqrt{\frac{2K\left(\frac{\sqrt{2+\epsilon}}{2}\right)}{K\left(\frac{\sqrt{2-\epsilon}}{2}\right)}}
\]
 one can verify that $2m P^{(\epsilon)}/ \Lambda = J_\epsilon(\RR)$ if
\[
    2m \ge \frac{\ell_1}{u} + \frac{\ell_2}{v} = 16 K\left(\frac{\sqrt{2}}{2}\right) \frac{1}{\sqrt{\epsilon}} + o\left(\frac{1}{\sqrt{\epsilon}}\right)
\]
or, if we want an exact expression at the price of a less accurate asymptotic constant, we can show that $2m P^{(\epsilon)}/ \Lambda = J_\epsilon(\RR)$ if
$
    2m \ge 19 K\left(\sqrt{2}/{2}\right) /{\sqrt{\epsilon}}
$. Then finally, for all $0<\epsilon <1$
\begin{equation}
    \label{eq:numerical_upper_bound}
    \N(X_\epsilon) \le 19 K\left(\frac{\sqrt{2}}{2}\right) \frac{1}{\sqrt{\epsilon}}
\end{equation}
Note that this also shows that $\N(X_\epsilon)$ remains bounded for $\epsilon\to1^-$.

We can also perform a similar analysis using the outer approximation of $\conv(\phi(X_\epsilon(\RR)))$ with the rectangle in \Cref{fig:example210bis}, right. In this way, we obtain a lower bound for $\N(X_\epsilon)$ (instead of an upper bound) of the form:
\[
    \N(X_\epsilon) \ge 4 K\left(\frac{\sqrt{2}}{2}\right) \frac{1}{\sqrt{\epsilon}} + o\left(\frac{1}{\sqrt{\epsilon}}\right)
\]
Notice that this lower bound is, up to a constant, asymptotically the same as the one we obtained in \Cref{sec:example_1} using \Cref{thm:lower_bound_r=1}. Combining the inequalities \eqref{eq:numerical_lower_bound} and \eqref{eq:numerical_upper_bound}, we conclude that $\N(X_\epsilon)$ grows asymptotically as $1/\sqrt{\epsilon}$, or more precisely that
\[
    \frac{1.854\ldots}{\sqrt{\epsilon}}=K\left(\frac{\sqrt{2}}{2}\right) \frac{1}{\sqrt{\epsilon}} \le \N(X_\epsilon) \le 19 K\left(\frac{\sqrt{2}}{2}\right) \frac{1}{\sqrt{\epsilon}} =\frac{35.22\ldots}{\sqrt{\epsilon}}
\]
holds for all $0<\epsilon<1/2$. In particular, for the family $X_\epsilon$ the bound in \Cref{thm:lower_bound_r=1} is asymptotically tight, up to a constant.

\smallskip
We conclude with the following remark: In the context of nonnegative polynomials on curves, real $2$-torsion points of the Jacobian are of special interest \cites{scheidererSumsSquaresRegular2000,baldiNonnegativePolynomialsMoment2024b}. Let $Q_1^{(\epsilon)}$, $Q_2^{(\epsilon)}$ and $Q_3^{(\epsilon)}$ be the three complex conjugate pairs of points on $X_\epsilon$ such that $Q_1^{(\epsilon)}+Q_2^{(\epsilon)}+Q_3^{(\epsilon)}$ is the ramification divisor of the canonical map $X_\epsilon\to\PP^1$. Then the three non-trivial real $2$-torsion points of $J_\epsilon$ are represented by the three divisors $Q_i^{(\epsilon)}-Q_j^{(\epsilon)}$ for $1\leq i<j\leq 3$. Using the outer approximation of $\conv(\phi(X_\epsilon(\RR)))$ with the box on the right of \Cref{fig:example210bis}, we can give a lower bound on the smallest natural number $m$ such that $Q_i^{(\epsilon)}-Q_j^{(\epsilon)}+m\cdot P_0$ is linearly equivalent to a totally real effective divisor. Indeed, if $m$ times this box contains a $2$-torsion point, then we must have $\frac{\ell_1}{2}\leq m\cdot 2u$ or $\frac{\ell_2}{2}\leq m\cdot v$. This is equivalent to one of the following inequalities being satisfied:
\begin{align*}
 K\left(\frac{(1+\epsilon)\sqrt{2-\epsilon}}{2}\right)\leq& m\cdot\frac{\sqrt{\epsilon}}{4}\cdot\int_{0}^{{\infty}}\frac{\dd t}{\sqrt{tf_\epsilon(t)}}\\
 K\left(\frac{\sqrt{2+\epsilon}}{2}\right)\leq& m\cdot\frac{\sqrt{\epsilon}}{8}\cdot\int_{0}^{{\infty}}\frac{\dd t}{\sqrt{f_\epsilon(t)}}
\end{align*}
For $0<\epsilon<\frac{1}{2}$ the elliptic integrals on the left-hand sides can be bounded from below by $1$ and we have that $f_\epsilon(t)\geq(1+t)(t+\frac{3}{4})^2$ for all $t\geq0$. Then one of the following inequalities are satisfied:
\begin{align*}
 1\leq m\cdot\frac{\sqrt{\epsilon}}{4}\cdot\int_{0}^{{\infty}}\frac{\dd t}{(t+\frac{3}{4})\sqrt{t(1+t)}}&=m\cdot\sqrt{\epsilon}\cdot\frac{\pi }{3 \sqrt{3}}\leq m\cdot\sqrt{\epsilon}\\
 1\leq m\cdot\frac{\sqrt{\epsilon}}{8}\cdot\int_{0}^{{\infty}}\frac{\dd t}{(t+\frac{3}{4})\sqrt{(1+t)}}&=m\cdot\sqrt{\epsilon}\cdot\frac{\log (9)}{8}\leq m\cdot\sqrt{\epsilon}
\end{align*}
In conclusion, the smallest natural number $m$ such that $Q_i^{(\epsilon)}-Q_j^{(\epsilon)}+m\cdot P_0$ is linearly equivalent to a totally real effective divisor satisfies $m\geq\frac{1}{\sqrt{\epsilon}}$ for all $0<\epsilon<\frac{1}{2}$.

\begin{bibdiv}
\begin{biblist}

\bib{baldiNonnegativePolynomialsMoment2024b}{misc}{
      author={Baldi, Lorenzo},
      author={Blekherman, Grigoriy},
      author={Sinn, Rainer},
       title={Nonnegative {{Polynomials}} and {{Moment Problems}} on
  {{Algebraic Curves}}},
   publisher={arXiv},
        date={2024},
      number={arXiv:2407.06017},
}

\bib{bochnakRealAlgebraicGeometry1998a}{book}{
      author={Bochnak, Jacek},
      author={Coste, Michel},
      author={Roy, Marie-Francoise},
       title={Real {{Algebraic Geometry}}},
      series={Ergebnisse Der {{Mathematik}} Und Ihrer {{Grenzgebiete}}. 3.
  {{Folge}} / {{A Series}} of {{Modern Surveys}} in {{Mathematics}}},
   publisher={Springer-Verlag},
     address={Berlin Heidelberg},
        date={1998},
        ISBN={978-3-540-64663-1},
}

\bib{birkenhakeComplexAbelianVarieties2004}{book}{
      author={Birkenhake, Christina},
      author={Lange, Herbert},
       title={Complex {{Abelian Varieties}}},
      series={Grundlehren Der Mathematischen {{Wissenschaften}}},
   publisher={Springer},
     address={Berlin, Heidelberg},
        date={2004},
      volume={302},
        ISBN={978-3-642-05807-3 978-3-662-06307-1},
}

\bib{buserCounterexamplesSurfaceHomology2024}{article}{
      author={Buser, Peter},
      author={Makover, Eran},
      author={Muetzel, Bjoern},
       title={Some counterexamples in surface homology},
        date={2024},
        ISSN={1432-1785},
     journal={Manuscripta Math.},
      volume={175},
      number={3},
       pages={905\ndash 919},
}

\bib{baldiComputingPeriodsReal}{misc}{
      author={Baldi, Lorenzo},
      author={{Pichon-Pharabod}, Eric},
       title={Computing periods of real algebraic curves},
   note={in preparation},
}

\bib{basuAlgorithmsRealAlgebraic2006}{book}{
      author={Basu, Saugata},
      author={Pollack, Richard},
      author={Roy, Marie-Fran{\c c}oise},
       title={Algorithms in {{Real Algebraic Geometry}}},
     edition={2},
      series={Algorithms and {{Computation}} in {{Mathematics}}},
   publisher={Springer-Verlag},
     address={Berlin Heidelberg},
        date={2006},
        ISBN={978-3-540-33098-1},
}

\bib{buserGeodesicsPeriodsEquations2001c}{article}{
      author={Buser, Peter},
      author={Silhol, Robert},
       title={Geodesics, periods, and equations of real hyperelliptic curves},
        date={2001},
        ISSN={0012-7094, 1547-7398},
     journal={Duke Math. J.},
      volume={108},
      number={2},
       pages={211\ndash 250},
}

\bib{benoistIntegralHodgeConjecture2020}{article}{
      author={Benoist, Olivier},
      author={Wittenberg, Olivier},
       title={On the integral {{Hodge}} conjecture for real varieties, {{II}}},
        date={2020},
        ISSN={2270-518X},
     journal={J. Ec. Polytech. - Math.},
      volume={7},
       pages={373\ndash 429},
}

\bib{comessattiSulleVarietaAbeliane1925a}{article}{
      author={Comessatti, Annibale},
       title={{Sulle variet{\`a} abeliane reali}},
        date={1925},
        ISSN={1618-1891},
     journal={Ann. Mat. Pura Appl.},
      volume={2},
      number={1},
       pages={67\ndash 106},
}

\bib{comessattiSulleVarietaAbeliane1926}{article}{
      author={Comessatti, Annibale},
       title={{Sulle variet{\`a} abeliane reali}},
        date={1926},
        ISSN={1618-1891},
     journal={Ann. Mat. Pura Appl.},
      volume={3},
      number={1},
       pages={27\ndash 71},
}

\bib{cilibertoRealAbelianVarieties1996a}{incollection}{
      author={Ciliberto, Ciro},
      author={Pedrini, Claudio},
       title={Real abelian varieties and real algebraic curves},
        date={1996},
   booktitle={Lectures in {{Real Geometry}}},
      editor={Broglia, Fabrizio},
   publisher={DE GRUYTER},
       pages={167\ndash 256},
}

\bib{didioMultidimensionalTruncatedMoment2021b}{article}{
      author={Dio, Philipp J.~{di}},
      author={Kummer, Mario},
       title={The multidimensional truncated moment problem:
  {{Carath{\'e}odory}} numbers from {{Hilbert}} functions},
        date={2021},
        ISSN={1432-1807},
     journal={Math. Ann.},
      volume={380},
      number={1},
       pages={267\ndash 291},
}

\bib{delignemumford}{article}{
      author={Deligne, Pierre},
      author={Mumford, D.},
       title={The irreducibility of the space of curves of a given genus},
        date={1969},
        ISSN={0073-8301},
     journal={Publ. Math., Inst. Hautes {\'E}tud. Sci.},
      volume={36},
       pages={75\ndash 109},
}

\bib{farkasRiemannSurfaces1992}{book}{
      author={Farkas, H.~M.},
      author={Kra, I.},
       title={Riemann surfaces},
     edition={Second},
      series={Graduate {{Texts}} in {{Mathematics}}},
   publisher={Springer-Verlag, New York},
        date={1992},
      volume={71},
        ISBN={978-0-387-97703-4},
}

\bib{fultonNotesStableMaps1997}{incollection}{
      author={Fulton, W.},
      author={Pandharipande, R.},
       title={Notes on stable maps and quantum cohomology},
        date={1997},
   booktitle={Algebraic geometry---{{Santa Cruz}} 1995},
      series={Proc. {{Sympos}}. {{Pure Math}}.},
      volume={62, Part 2},
   publisher={Amer. Math. Soc., Providence, RI},
       pages={45\ndash 96},
}

\bib{degaayfortmanRealModuliSpaces2022a}{article}{
      author={Gaay~Fortman, Olivier~{de}},
       title={Real {{Moduli Spaces And Density Of Non-simple Real Abelian
  Varieties}}},
        date={2022},
        ISSN={0033-5606, 1464-3847},
     journal={Q. J. Math.},
      volume={73},
      number={3},
       pages={969\ndash 989},
}

\bib{griffithsPrinciplesAlgebraicGeometry1978}{book}{
      author={Griffiths, Phillip},
      author={Harris, Joseph},
       title={Principles of algebraic geometry},
   publisher={Wiley-Interscience [John Wiley \& Sons] New York},
        date={1978},
        ISBN={978-0-471-32792-9},
}

\bib{grossRealAlgebraicCurves1981a}{article}{
      author={Gross, Benedict~H.},
      author={Harris, Joe},
       title={Real algebraic curves},
        date={1981},
        ISSN={1873-2151},
     journal={Ann. Sci. Ec. Norm. Super.},
      volume={14},
      number={2},
       pages={157\ndash 182},
}

\bib{polymake}{incollection}{
      author={Gawrilow, Ewgenij},
      author={Joswig, Michael},
       title={{P}olymake: a framework for analyzing convex polytopes},
        date={2000},
   booktitle={Polytopes --- combinatorics and computation. {DMV}-seminar
  {O}berwolfach, {G}ermany, {N}ovember 1997},
   publisher={{B}asel: Birkh{\"a}user},
       pages={43\ndash 73},
}

\bib{egaIV3}{article}{
      author={Grothendieck, A.},
       title={{{\'E}l{\'e}ments de g{\'e}om{\'e}trie alg{\'e}brique. IV:
  {\'E}tude locale des sch{\'e}mas et des morphismes de sch{\'e}mas.
  (Troisi{\`e}me partie). R{\'e}dig{\'e} avec la colloboration de J.
  Dieudonn{\'e}}},
        date={1966},
        ISSN={0073-8301},
     journal={Publ. Math., Inst. Hautes {\'E}tud. Sci.},
      volume={28},
       pages={1\ndash 255},
}

\bib{hartshorneDeformationTheory2010}{book}{
      author={Hartshorne, Robin},
       title={Deformation {{Theory}}},
      series={Graduate {{Texts}} in {{Mathematics}}},
   publisher={Springer},
     address={New York, NY},
        date={2010},
      volume={257},
        ISBN={978-1-4419-1595-5 978-1-4419-1596-2},
}

\bib{hatcherAlgebraicTopology}{book}{
      author={Hatcher, Allen},
       title={Algebraic topology},
   publisher={Cambridge University Press},
        date={2002},
}

\bib{habermannRiemannianMetricsTeichmuller1996a}{article}{
      author={Habermann, Lutz},
      author={Jost, J{\"u}rgen},
       title={Riemannian metrics on {{Teichm{\"u}ller}} space},
        date={1996},
        ISSN={1432-1785},
     journal={Manuscripta Math.},
      volume={89},
      number={1},
       pages={281\ndash 306},
}

\bib{huismanGeometryAlgebraicCurves2001}{article}{
      author={Huisman, J.},
       title={On the geometry of algebraic curves having many real
  components.},
        date={2001},
        ISSN={1139-1138},
     journal={Rev. Mat. Complut.},
      volume={14},
      number={1},
       pages={83\ndash 92},
}

\bib{huismanNonspecialDivisorsReal2003}{article}{
      author={Huisman, J.},
       title={Non-special divisors on real algebraic curves and embeddings into
  real projective spaces},
        date={2003},
        ISSN={1618-1891},
     journal={Ann. Mat. Pura Appl. IV. Ser.},
      volume={182},
      number={1},
       pages={21\ndash 35},
}

\bib{huismanRealQuotientSingularities1999}{article}{
      author={Huisman, J.},
       title={Real {{Quotient Singularities}} and {{Nonsingular Real Algebraic
  Curves}} in the {{Boundary}} of the {{Moduli Space}}},
        date={1999},
        ISSN={1570-5846},
     journal={Compos. Math.},
      volume={118},
      number={1},
       pages={43\ndash 60},
}

\bib{chowquotientsI}{incollection}{
      author={Kapranov, M.~M.},
       title={Chow quotients of {Grassmannians}. {I}},
        date={1993},
   booktitle={{I}. {M}. {G}elfand seminar. {P}art 2: {P}apers of the {G}elfand
  seminar in functional analysis held at {M}oscow {U}niversity, {R}ussia,
  {S}eptember 1993},
   publisher={Providence, RI: American Mathematical Society},
       pages={29\ndash 110},
}

\bib{kummerHuismansConjecturesUnramified2021}{article}{
      author={Kummer, Mario},
      author={Manevich, Dimitri},
       title={On {{Huisman}}'s conjectures about unramified real curves},
        date={2021},
        ISSN={1615-7168},
     journal={Adv. Geom.},
      volume={21},
      number={4},
       pages={545\ndash 549},
}

\bib{krasnovAlbaneseMapGMZvarieties1984}{article}{
      author={Krasnov, V.~A.},
       title={Albanese map for {{GMZ-varieties}}},
        date={1984},
        ISSN={1573-8876},
     journal={Mathematical Notes of the Academy of Sciences of the USSR},
      volume={35},
      number={5},
       pages={391\ndash 396},
}

\bib{kummerSeparatingSemigroupReal2020}{article}{
      author={Kummer, Mario},
      author={Shaw, Kristin},
       title={{The separating semigroup of a real curve}},
        date={2020},
        ISSN={2258-7519},
     journal={Ann. Fac. Sci. Toulouse Math{\'e}matiques},
      volume={29},
      number={1},
       pages={79\ndash 96},
}

\bib{ERA}{book}{
      author={Knebusch, Manfred},
      author={Scheiderer, Claus},
       title={Real algebra. {A} first course. {Translated} from the {German}
  and with contributions by {Thomas} {Unger}},
      series={Universitext},
   publisher={Cham: Springer},
        date={2022},
        ISBN={978-3-031-09799-7; 978-3-031-09800-0},
}

\bib{tottheta}{article}{
      author={Kummer, Mario},
       title={Totally real theta characteristics},
        date={2019},
        ISSN={0373-3114},
     journal={Ann. Mat. Pura Appl. (4)},
      volume={198},
      number={6},
       pages={2141\ndash 2150},
}

\bib{leComputingTotallyReal2022c}{article}{
      author={Le, Huu~Phuoc},
      author={Manevich, Dimitri},
      author={Plaumann, Daniel},
       title={Computing totally real hyperplane sections and linear series on
  algebraic curves},
        date={2022},
        ISSN={2037-5298},
     journal={Matematiche},
      volume={77},
      number={1},
       pages={119\ndash 141},
}

\bib{mangolteRealAlgebraicVarieties2020a}{book}{
      author={Mangolte, Fr{\'e}d{\'e}ric},
       title={Real algebraic varieties},
      series={Springer {{Monographs}} in {{Mathematics}}},
   publisher={Springer, Cham},
        date={2020},
        ISBN={978-3-030-43104-4 978-3-030-43103-7},
}

\bib{maple}{misc}{
      author={{Maplesoft, a division of Waterloo Maple Inc..}},
       title={Maple},
     address={Waterloo, Ontario},
         url={https://hadoop.apache.org},
}

\bib{milneAbelianVarieties1986}{incollection}{
      author={Milne, J.~S.},
       title={Abelian {{Varieties}}},
        date={1986},
   booktitle={Arithmetic {{Geometry}}},
      editor={Cornell, Gary},
      editor={Silverman, Joseph~H.},
   publisher={Springer},
     address={New York, NY},
       pages={103\ndash 150},
}

\bib{mikhalkinMaximallyWrithedReal2019}{article}{
      author={Mikhalkin, Grigory},
      author={Orevkov, Stepan},
       title={Maximally writhed real algebraic links},
        date={2019},
        ISSN={1432-1297},
     journal={Invent. Math.},
      volume={216},
      number={1},
       pages={125\ndash 152},
}

\bib{mikhalkinRigidIsotopyMaximally2021}{article}{
      author={Mikhalkin, Grigory},
      author={Orevkov, Stephan},
       title={Rigid isotopy of maximally writhed links},
        date={2021},
        ISSN={2214-2584},
     journal={Alg. Geom.},
       pages={268\ndash 285},
}

\bib{monnierDivisorsRealCurves2003}{article}{
      author={Monnier, Jean-Philippe},
       title={Divisors on real curves},
        date={2003},
        ISSN={1615-7168},
     journal={Adv. Geom.},
      volume={3},
      number={3},
       pages={339\ndash 360},
}

\bib{monnierRealGeneralizedJacobian2005}{article}{
      author={Monnier, Jean-Philippe},
       title={On real generalized {{Jacobian}} varieties},
        date={2005},
        ISSN={0022-4049},
     journal={J. Pure Appl. Algebra},
      volume={203},
      number={1},
       pages={252\ndash 274},
}

\bib{natanzonModuliRealAlgebraic1999}{article}{
      author={Natanzon, Sergei~Mironovich},
       title={Moduli of real algebraic surfaces, and their superanalogues.
  {{Differentials}}, spinors, and {{Jacobians}} of real curves},
        date={1999},
        ISSN={0036-0279, 1468-4829},
     journal={Russ. Math. Surv.},
      volume={54},
      number={6},
       pages={1091\ndash 1147},
}

\bib{scheidererSumsSquaresRegular2000}{article}{
      author={Scheiderer, Claus},
       title={Sums of squares of regular functions on real algebraic
  varieties},
        date={2000},
        ISSN={0002-9947, 1088-6850},
     journal={Trans. Amer. Math. Soc.},
      volume={352},
      number={3},
       pages={1039\ndash 1069},
}

\bib{ScheidererBook}{book}{
      author={Scheiderer, Claus},
       title={A course in real algebraic geometry. {Positivity} and sums of
  squares},
      series={Grad. Texts Math.},
   publisher={Cham: Springer},
        date={2024},
      volume={303},
        ISBN={978-3-031-69212-3; 978-3-031-69213-0},
}

\bib{scheidererRealEtaleCohomology1994}{book}{
      author={Scheiderer, Claus},
       title={Real and {{{\'E}tale Cohomology}}},
      series={Lecture {{Notes}} in {{Mathematics}}},
   publisher={Springer},
     address={Berlin, Heidelberg},
        date={1994},
      volume={1588},
        ISBN={978-3-540-58436-0 978-3-540-48797-5},
}

\bib{silholRealAlgebraicSurfaces1989a}{book}{
      author={Silhol, Robert},
       title={Real {{Algebraic Surfaces}}},
      series={Lecture {{Notes}} in {{Mathematics}}},
   publisher={Springer},
     address={Berlin, Heidelberg},
        date={1989},
      volume={1392},
        ISBN={978-3-540-51563-0 978-3-540-70649-6},
}

\bib{seppalaModuliSpacesReal1989}{article}{
      author={Sepp{\"a}l{\"a}, M.},
      author={Silhol, R.},
       title={Moduli spaces for real algebraic curves and real abelian
  varieties},
        date={1989},
        ISSN={1432-1823},
     journal={Math. Z.},
      volume={201},
      number={2},
       pages={151\ndash 165},
}

\bib{vinnikovSelfadjointDeterminantalRepresentations1993}{article}{
      author={Vinnikov, Victor},
       title={Self-adjoint determinantal representations of real plane curves},
        date={1993},
        ISSN={1432-1807},
     journal={Math. Ann.},
      volume={296},
      number={1},
       pages={453\ndash 479},
}

\bib{wentworthAsymptoticsArakelovGeensFunction1991}{article}{
      author={Wentworth, R.},
       title={The asymptotics of the {{Arakelov-Geen}}'s function and
  {{Faltings}}' delta invariant},
        date={1991},
        ISSN={1432-0916},
     journal={Commun. Math. Phys.},
      volume={137},
      number={3},
       pages={427\ndash 459},
}

\bib{Mathematica}{misc}{
      author={{Wolfram Research Inc.}},
       title={Mathematica, {V}ersion 14.3},
         url={https://www.wolfram.com/mathematica},
        note={Champaign, IL, 2025},
}

\end{biblist}
\end{bibdiv}
\end{document}